\theoremstyle:=definition,remark,plain\do{\expandafter\g@addto@macro\csname th@\theoremstyle\endcsname{
\addtolength\thm@preskip\parskip}}
\renewenvironment{abstract}
 { \normalsize
  \list{}{\setlength{\leftmargin}{.0cm}%
    \setlength{\rightmargin}{\leftmargin}}%
  \item {\bf \abstractname.}\relax}
 {\endlist}
\titlespacing*{\paragraph}{0pt}{3.25ex plus 1ex minus .2ex}{0.5ex plus .2ex}
\definecolor{dnrbl}{rgb}{0,0,0.3}
\definecolor{dnrgr}{rgb}{0,0.3,0}
\definecolor{dnrre}{rgb}{0.5,0,0}
\theoremstyle{plain}
\newtheorem{thm}{Theorem}[section]
\newtheorem{prop}[thm]{Proposition}
\newtheorem{lem}[thm]{Lemma}
\newtheorem{coro}[thm]{Corollary}
\theoremstyle{definition}
\newenvironment{rem}{{\bf Remark}.}{\hfill$\blacksquare$}
\newtheorem{defi}[thm]{Definition}
\newcommand{\Nat}{\mathbb{N}}
\newcommand{\Ws}{\textsf{W}}
\newcommand{\ds}{\textup{\textsf{d}}}
\newcommand{\restr}{\upharpoonright}  
\newcommand{\un}{\uparrow} 
\newcommand{\de}{\downarrow} 
\newcommand{\sqbrad}[2]{\{\hspace{0.03cm}{#1} : {#2}\hspace{0.03cm}\}}
\DeclarePairedDelimiter{\dbra}{\llbracket}{\rrbracket}
\newcommand{\bigo}[1]{\textsf{O}\hspace{0.02cm}\big({#1}\big)}
\newcommand{\mm}{\mathbf{m}}
\newcommand{\CC}{\mathcal{C}}
\newcommand{\wgt}[1]{\mathop{\mathsf{wgt}}\/\left({#1}\right)}
\newcommand{\abs}[1]{|{#1}|}
\newcommand{\asto}{^{\ast}}
\newcommand{\parb}[1]{\big({#1}\big)}
\newcommand{\TT}{\mathcal{T}}
\newcommand{\ml}{Martin-L\"{o}f }
\newcommand{\pz}{$\Pi^0_1$\ }
\newcommand{\pzt}{$\Pi^0_2$\ }
\newcommand{\eg}{e.g.\ }
\newcommand{\ce}{c.e.\ }
\newcommand{\lce}{left-c.e.\ }
\newcommand{\pf}{prefix-free }
\newcommand{\twome}{2^{\omega}}
\newcommand{\zj}{\emptyset'}
\newcommand{\twomel}{2^{<\omega}}
\newcommand{\wedga}{\ \wedge\ \ }
\newcommand{\PA}{\mathsf{PA}}
\newcommand{\leqT}{\leq_T}
\newcommand{\geqT}{\geq_T}
\newcommand{\equivT}{\equiv_T}
\newcommand{\impl}{\ \Rightarrow\ }
\newcommand{\hthree}{\hspace{0.3cm}}
\newcommand{\Ts}{\mathsf{T}}
\newcommand{\Phim}{\Phi^{-1}}
\newcommand{\zjj}{\emptyset''}
\newcommand{\geqp}{\overset{\textrm{\tiny $+$}}{\geq}}
\newcommand{\leqp}{\overset{\textrm{\tiny $+$}}{\leq}}
\newcommand{\leqt}{\overset{\textrm{\tiny $\times$}}{\leq}}
\newcommand{\eqp}{\overset{\textrm{\tiny $+$}}{=}}
\newcommand{\Mb}{\mathbf{M}}
\newcommand{\sz}{$\Sigma^0_1$\ }
\newcommand{\szn}{$\Sigma^0_1$}
\title{Dimensionality and randomness\thanks{Authors appear in alphabetical order.  Supported by Beijing Natural Science Foundation (IS24013).}}
\author{George Barmpalias}\author{Xiaoyan Zhang} 
\affil{State Key Lab of Computer Science, Institute of Software\\ Chinese Academy of Sciences, Beijing, China}
\begin{document}
\maketitle
\begin{abstract}
Arranging the bits of a random string or real into $k$ columns of a  
two-dimensional array or higher dimensional structure is typically 
accompanied with loss in the Kolmogorov complexity of the columns, which depends on $k$. 
We quantify and characterize this phenomenon for arrays and trees 
and its relationship to negligible classes.
\end{abstract}

\section{Introduction}
The initial segments of an algorithmically random real $x$ in the  sense of \citet{MR0223179}  are incompressible: the length of the shortest 
self-delimiting program generating the $n$-bit prefix $x\restr_n$ of it, its {\em \pf Kolmogorov complexity} $K(x\restr_n)$,  
is at least $n$. On the other hand, the binary expansions of random reals $x$ have  arbitrarily long highly compressible 
{\em segments} (e.g.\ blocks of 0s), although there are infinite binary sequences and matrices 
which only contain highly complex segments \cite{Miller2012, DurandLevinShen08}.
Similarly, any tree of unbounded width computable by a sufficiently random real has a  compressible path \cite{bslBienvenuP16,treeout,dimtree}. 
So a random real may be compressible under different effective arrangements of its bits, as Figure \ref{QlwGmyUgT} illustrates.

This contrasts the fact that effectively extracted strings of bits from a random real $x$ 
are incompressible, and indicates that the compressibility (randomness deficiency) 
 is a function of the number of strings extracted from $x$. 
Our goal is to determine the
\begin{itemize}
\item  limits of incompressibility  in arrays/trees  computable from sufficiently random reals
\item randomness deficiency of such arrays/trees, as a function of their growth in size    
\item class of random reals that do not have the above limitations.
\end{itemize}
The above phenomenon can be interpreted in terms of probabilistic algorithms
and {\em negligible}  classes, immune to
effective probabilistic generation \cite{neglivyuginold}.
Although incompressible strings/reals can be produced by probabilistic machines
(with high probability), nontrivial arrays/trees of incompressible reals cannot. 

\begin{figure}
\scalebox{0.7}{\begin{tikzpicture}[start chain=1 going right, start chain=3 going above, start chain=2 going above,,node distance=0.2mm,
kno/.style={fill=white!60!black}, kna/.style={fill=black!90!gray}, kni/.style={fill=black!80!gray}]
\foreach \y in {1,...,15} \node [kni,on chain=2] {};\hspace{0.2cm}\draw[->,   in = 180, out=0, very thick, gray!60!black]   (0.2,2) to (0.9,2);\hspace{0.2cm}
\node [kna,on chain=3] at (1.2,1.3) {}; \foreach \x in {1,...,3}  \node [kna,on chain=3] {};\node [kno, on chain=3] at (1.5,1.15) {}; \foreach \x in {1,...,4}  \node [kno,on chain=3] {};
\node [kna,on chain=3] at (1.8,1.15) {}; \foreach \x in {1,...,2}  \node [kna,on chain=3] {}; \node [kno,on chain=3] at (2.1,1.15) {}; \foreach \x in {1,...,3}  \node [kno,on chain=3] {};
\end{tikzpicture}\centering\hspace{3cm}
\begin{tikzpicture}[start chain=1 going above, start chain=2 going above,  node distance=0.2mm, kno/.style={fill=white!60!black}, kna/.style={fill=black!90!gray}]
\node [kno,on chain=1] at (1.2,2.15) {}; \foreach \x in {1,...,13}  \node [kno,on chain=1] {}; \node [kna,on chain=1] at (1.5,2) {}; \foreach \x in {1,...,12}  \node [kna,on chain=1] {};
\node [kno,on chain=1] at (1.8,2) {};\foreach \x in {1,...,13}  \node [kno,on chain=1] {}; \node [kna,on chain=1] at (2.1,2) {}; \foreach \x in {1,...,12}  \node [kna,on chain=1] {};
\hspace{0.4cm} \draw[->,   in = 180, out=0, very thick, gray!60!black]   (2.1,4) to (2.9,4);
\hspace{0.4cm} \node [kno,on chain=2] at (3.1,3.55) {}; \foreach \x in {1,...,3}  \node [kno,on chain=2] {};
\node [kna,on chain=2] at (3.4,3.4) {}; \foreach \x in {1,...,2}  \node [kna,on chain=2] {}; \node [kno,on chain=2] at (3.7,3.4) {}; \foreach \x in {1,...,4}  \node [kno,on chain=2] {};
\node [kna,on chain=2] at (4,3.4) {}; \foreach \x in {1,...,3}  \node [kna,on chain=2] {}; \node [kno,on chain=2] at (4.3,3.4) {}; \foreach \x in {1,...,2}  \node [kno,on chain=2] {};
\node [kna,on chain=2] at (4.6,3.4) {}; \foreach \x in {1,...,3}  \node [kna,on chain=2] {}; \node [kno,on chain=2] at (4.9,3.4) {}; \foreach \x in {1,...,4}  \node [kno,on chain=2] {};
\node [kna,on chain=2] at (5.2,3.4) {}; \foreach \x in {1,...,3}  \node [kna,on chain=2] {}; \node [kno,on chain=2] at (5.5,3.4) {}; \foreach \x in {1,...,5}  \node [kno,on chain=2] {};
\node [kna,on chain=2] at (5.8,3.4) {}; \foreach \x in {1,...,3}  \node [kna,on chain=2] {}; \node [kno,on chain=2] at (6.1,3.4) {}; \foreach \x in {1,...,4}  \node [kno,on chain=2] {};
\node [kna,on chain=2] at (6.4,3.4) {}; \foreach \x in {1,...,3}  \node [kna,on chain=2] {};\end{tikzpicture}}\centering\hspace{2cm} 
\caption{Loss of complexity when arranging a real into columns: black and gray columns indicate segments of high and low complexity, respectively.}\label{QlwGmyUgT}\centering
\end{figure}

\subsection{Our contribution}\label{yP4cTWTJZG}

Let $\twome, \twomel, 2^n$ denote the sets of infinite binary sequences, finite binary strings, and $n$-bit binary strings respectively, and let $2^{\leq n}=\bigcup_{i\leq n}2^i$.

The {\em deficiency} of a string $\sigma$ is $\ds(\sigma):=|\sigma|-K(\sigma)$, namely the number of bits between its length and its shortest prefix-free description. Given a set $E$ of strings
\begin{itemize}
\item the {\em deficiency} $\ds(E)$  is the supremum of the deficiencies of the members of $E$
\item $E$ is a  {\em set of incompressible strings} if it has finite deficiency.
\end{itemize}
Any random real can effectively generate an infinite set of incompressible strings, \eg the set of its prefixes.
Toward a stronger statement, we calibrate the size of sets via {\em orders}: nondecreasing unbounded
$g:\Nat\to\Nat$.  We say that $D$ is {\em $g$-fat} if $\max_{i\leq n} \abs{D\cap 2^i}\geq g(n)$, where
$2^i$ denotes the set of $i$-bit strings.
Let $\leqT$ denote the Turing reducibility.
\begin{thm}\label{ODDbi7Cm2s}
Every random real computes an $n/(\log n)^2$-fat set of incompressible strings.
\end{thm}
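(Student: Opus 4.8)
The plan is to exhibit one oracle algorithm which, given a \ml random $x$, outputs a set $D_x\leqT x$ that is $n/(\log n)^2$-fat and incompressible, and to certify both facts by \ml tests, so that they hold for every \ml random $x$. Write $\mm$ for the (discrete) universal lower-semicomputable semimeasure, so $\mm(\sigma)\eqt 2^{-K(\sigma)}$, and record the elementary but decisive estimate that the $c$-compressible strings of a fixed length $\ell$ are only a $2^{-c}\mm(\ell)$-fraction of $2^\ell$: the map $\ell\mapsto\sum_{|\sigma|=\ell}\mm(\sigma)$ is a lower-semicomputable semimeasure on $\Nat$ of total mass $\sum_\sigma\mm(\sigma)\le1$, hence $\bigo{\mm(\ell)}$, and counting the $\sigma\in 2^\ell$ with $\mm(\sigma)>2^{c-\ell}$ gives $\abs{\sqbrad{\sigma\in 2^\ell}{K(\sigma)<\ell-c}}=\bigo{2^{\ell-c}\mm(\ell)}$.

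Construction of $D_x$: reading $x$ left to right in stages $k=1,2,\dots$, at stage $k$ first read $k$ fresh bits of $x$ and let $\ell_k\in[2^k,2^{k+1})$ be $2^k$ plus the number they code; put $w_k:=\ceil{2^{k+2}/(k+1)^2}$; then read $2w_k\ell_k$ further fresh bits, cut them into $2w_k$ consecutive length-$\ell_k$ blocks, and throw all $2w_k$ blocks into $D_x$ (also throw in every string of length below $\ell_1$, harmlessly). This is $x$-computable with finite work per stage, so $D_x\leqT x$. For \ml random $x$ the $2w_k$ blocks at level $\ell_k$ are, conditionally on the earlier choices, $2w_k$ independent uniform members of $2^{\ell_k}$, so a birthday/Markov bound makes the chance of fewer than $w_k$ distinct values at most $2w_k2^{-\ell_k}\le 2^{k+3-2^k}$; the bad events for $k\ge k_0$ form a uniformly effectively open class of measure $\to0$, i.e.\ a \ml test, so there is $k_0$ with $\abs{D_x\cap 2^{\ell_k}}\ge w_k$ for all $k\ge k_0$. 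Since $\ell_{k+1}<2^{k+2}$ and $\log\ell_{k+1}\ge k+1$ we get $w_k\ge \ell_{k+1}/(\log\ell_{k+1})^2\ge g(n)$ for all $n<\ell_{k+1}$; taking the largest $k$ with $\ell_k\le n$ then yields $\max_{i\le n}\abs{D_x\cap 2^i}\ge g(n)$ for all large $n$, the short strings handling small $n$.

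For incompressibility, fix $c$ and set $V_c:=\sqbrad{x}{\ds(D_x)>c}$, which is effectively open uniformly in $c$ because $D_x$ is uniformly decidable from $x$ and $K$ is upper-semicomputable. Conditioning on all choices through stage $k$, the stage-$k$ block of $x$ is fresh and uniform, so the probability that one of its $2w_k$ length-$\ell_k$ blocks is $c$-compressible is at most $2w_k\cdot\bigo{2^{-c}\mm(\ell_k)}$; averaging over the $2^k$ equally likely values of $\ell_k$ and using $\sum_{2^k\le\ell<2^{k+1}}\mm(\ell)\le1$ bounds the stage-$k$ contribution to $\mu(V_c)$ by $2w_k\cdot 2^{-k}\cdot\bigo{2^{-c}}=\bigo{2^{-c}/(k+1)^2}$. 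Summing over $k$ gives $\mu(V_c)=\bigo{2^{-c}}$, so after shifting the index $\sqbra{V_c}_c$ is a \ml test; a \ml random $x$ therefore lies outside some $V_c$, that is $\ds(D_x)\le c<\infty$.

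The subtle point is this last step. Taking the blocks from a single fixed initial segment of $x$, or indexing the levels by any fixed computable sequence, would make the stage-$k$ error probability roughly $w_k\mm(\ell_k)2^{-c}$, and since every computable length sequence has $\mm(\ell_k)\gtrsim 1/k$ this sum diverges, so almost every $x$ would give $D_x$ infinite deficiency. Picking $\ell_k$ at random inside a dyadic block of size $2^k$ is precisely what replaces $\mm(\ell_k)$ by the \emph{average} of $\mm$ over that block, which is only $\bigo{2^{-k}}$; this is what makes the deficiency test summable while still leaving room for the $w_k\approx\ell_{k+1}/(\log\ell_{k+1})^2$ many strings per level demanded by $g$-fatness. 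The main work of the proof is organizing these two tests (the fatness test and the deficiency test) so that a single $x$-computable construction passes both, and verifying the density estimate and the semimeasure bound above with the right constants.
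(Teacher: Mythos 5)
Your proposal is correct and is essentially the paper's own argument (Theorem \ref{9NoMkmNVvP}): a total probabilistic oracle construction in which the level lengths $\ell_k$ are drawn uniformly from $[2^k,2^{k+1})$ exactly so that the expected fraction of compressible strings at the chosen length is $\bigo{2^{-k}}$ (the paper phrases this as $K(\ell_k)\geq k$ outside a Solovay test, you phrase it as averaging $\mm(\ell)$ over the dyadic block), followed by a union bound over the roughly $2^k/k^2$ strings per level and a summable test that every \ml random oracle passes; organizing the failure events as a \ml test indexed by the deficiency bound $c$ rather than a Solovay test over levels, and sampling with replacement plus a birthday bound instead of choosing a subset of exact size $2^k/k^2$, are only cosmetic differences. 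One small slip: your fatness guarantee is established only for $n$ above an $x$-dependent threshold $\ell_{k_0}$ (the strings of length below $\ell_1$ cover only $n\leq 3$), but this is repaired by a finite non-uniform patch (add all strings of length below $\ell_{k_0}$ to $D_x$), which affects neither $D_x\leqT x$ nor the finiteness of the deficiency.
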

This fails for considerably fat sets of strings:
\begin{thm}\label{24beca61}
If $g$ is a computable order with $\lim_n n/g(n)=0$, and a random real $z$ computes a $g$-fat set of incompressible strings, then $z\geqT\zj$.
\end{thm}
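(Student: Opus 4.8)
The plan is to prove the contrapositive: fix an ML‑random $z$ with $z\not\geqT\zj$, assume $\Phi^z=D$ is total, $g$‑fat, and $c$‑incompressible (so $K(\sigma)\geq|\sigma|-c$ for all $\sigma\in D$), and derive a contradiction. First, the structural reduction. Since $\lim_n n/g(n)=0$, for all large $n$ the level $i\leq n$ witnessing $g$‑fatness satisfies $2^{i}\geq|D\cap 2^{i}|\geq g(n)>n$, hence $i\to\infty$ with $n$; so the set $G=\{i:|D\cap 2^{i}|\geq g(i)\}$ of \emph{good levels} is infinite. For each nonempty level $i$ let $\sigma_i$ be the lexicographically least element of $D\cap 2^{i}$; then $\sigma_i\in D$, so $K(\sigma_i)\geq i-c$, while $\sigma_i$ is computed from $(i,z\restr_{u(i)})$ where $u(i)$ is the ($z$‑computable) use of computing $D\cap 2^{i}$, so $K(\sigma_i)\leq u(i)+2\log i+O(1)$ and thus $u(i)\geq i-2\log i-O(1)$: the computation must ``spend'' linearly many genuinely incompressible bits of $z$ at each good level.

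The core is a $\zj$‑level approximation of $D$. Since $\zj$ decides the $\Pi^0_1$ predicate ``$K(\sigma)\geq|\sigma|-c$'' and knows the lower‑semicomputable conditional probabilities $p_\rho(\sigma)=\mu\{w\succ\rho:\sigma\in\Phi^w\}\,/\,\mu[\rho]$, we build, following the prefixes of $z$, a proxy $\widehat D$ (committing $\sigma\in\widehat D$ once some $p_\rho(\sigma)$ with $\rho\prec z$ exceeds $1/2$, subject to $c$‑incompressibility and $g$‑fatness of what is committed at each good level) together with a correction log. The randomness of $z$ forces $z$ onto the majority side except on a Solovay‑summable family of prefixes, so the total correction weight is finite; this step uses only ML‑randomness of $z$. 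Here is where $z\not\geqT\zj$ is essential: if $z$ computed $\zj$, the functional $\Phi$ could postpone its commitments past every $\zj$‑bound and no proxy with summable correction weight would exist — the sets produced in Theorem~\ref{ODDbi7Cm2s} exhibit exactly the behaviour that separates the two regimes.

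Now the compression. At a good level $i$, the set $\widehat D\cap 2^{i}$ together with the correction bits for level $i$ determines $D\cap 2^{i}$, hence $\sigma_i$; so a single prefix‑free machine describes $\sigma_i$ using $O(\log i)$ bits to name the level plus the correction bits at $i$. As the correction bits sum to a finite total over $i\in G$, they are $o(i)$, so $K(\sigma_i)<i-c$ for infinitely many good levels $i$, contradicting $K(\sigma_i)\geq i-c$. Hence $z\geqT\zj$. The hypothesis $\lim_n n/g(n)=0$ enters twice: it makes $G$ infinite, so the compression recurs, and, in the finer count, it is the exact threshold at which the per‑good‑level correction costs less than $i-c-O(\log i)$; below it — e.g.\ for $g(n)=n/(\log n)^2$ — the construction of Theorem~\ref{ODDbi7Cm2s} survives inside the proxy's budget.

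The delicate part, and the expected main obstacle, is making the correction log simultaneously (a) of finite total weight, using ML‑randomness of $z$ alone, and (b) organised so that the correction for level $i$ is short enough that, added to the $O(\log i)$ name of the level, it stays below $i-c$ — uniformly in the adversarial choice of $\Phi$ and $D$. In particular one must reconcile ``$\widehat D\leqT\zj$'' with ``the compressing machine is an ordinary prefix‑free machine'': the resolution is that the correction bits at level $i$ are precisely what locates the settled stage of the $\zj$‑approximation there, and the rarity of corrections (from randomness of $z$) keeps this cheap. Choosing the majority thresholds and the granularity of commitments so that (a) and (b) hold at once is the crux.
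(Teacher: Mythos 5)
There is a genuine gap, and it sits exactly where you locate the ``crux'': nothing in your sketch actually bounds the information needed to recover $D\cap 2^i$ (or even $\sigma_i$) by an ordinary prefix-free machine. Your proxy $\widehat D$ is built using $\zj$ (to decide the $\Pi^0_1$ incompressibility facts and the thresholds on the lower-semicomputable probabilities $p_\rho(\sigma)$), so a plain machine cannot evaluate it; the ``correction bits locate the settled stage of the $\zj$-approximation at level $i$'' move is precisely the obstruction, not its resolution, since the settling time of $\zj$ admits no computable bound and locating it generically costs far more than $i-c-O(\log i)$ bits. The claim that ``randomness of $z$ forces $z$ onto the majority side except on a Solovay-summable family'' is also unsupported: majority-vote arguments need positive measure of oracles producing the \emph{same} output, which you do not have here, and the quantitative fact you would need --- that the probability that a functional outputs a $c$-incompressible level containing a fixed $\sigma\in 2^i$ is $O(2^{-i})$ --- is never proved in your proposal. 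That fact is the paper's Lemma~\ref{DP8sKcDJCf}, proved by a Kraft--Chaitin--Levin/recursion-theorem compression argument in which the measure of the offending oracle class pays for compressing $\sigma$; without it (or an equivalent), the assertions that corrections are ``rare'' and that their per-level cost is $o(i)$ have no basis. Note also that compressing the single string $\sigma_i$ below $i-c$ requires shaving off essentially all of its length, a far stronger conclusion than your budget of $O(\log i)$ plus ``finitely much total correction weight'' can deliver.

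For contrast, the paper's route avoids any $\zj$-computable proxy: from Lemma~\ref{DP8sKcDJCf} one gets $\sum_{\sigma\in 2^i}\mu(P_\sigma)=O(1)$ where $P_\sigma$ is the class of oracles whose output is $c$-incompressible and contains $\sigma$ at level $i$; if $x$ produces $\geq g_n$ strings at some level $i\leq n$ it lies in $\geq g_n$ of these classes, so the class of such oracles has measure $O(n/g_n)$, and since $n/g_n\to 0$ this yields a difference test ($\Pi^0_1$ class intersected with uniformly $\Sigma^0_1$ classes) that $z$ fails. The hypothesis $z\not\geq_T\zj$ is then used only through Franklin--Ng's theorem that an ML-random real is difference-random iff it does not compute $\zj$ --- not through any ``postponement'' behaviour of $\Phi$, which is where your intended use of that hypothesis is also not a checkable step. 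To repair your approach you would essentially have to prove Lemma~\ref{DP8sKcDJCf} anyway, at which point the test-based argument completes the proof directly.
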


Let $\mu$ denote the uniform measure on $\twome$.
A set $T$ of strings is a {\em tree} if it is downward-closed with respect to the prefix relation $\preceq$. 
The $\preceq$-maximal $\sigma\in T$ are called {\em deadends}. 
A tree $T$ is 
\begin{itemize}
\item {\em pruned} if it does not have deadends,
\item {\em proper} if its {\em width} $\abs{T\cap 2^{n}}$  is unbounded
\item {\em incompressible} if $\ds(T)<\infty$ and {\em $c$-incompressible} if $\ds(T)< c$, where $c\in\Nat$,
\item {\em weakly-incompressible} if $\liminf_n \ds(T\cap 2^n)<\infty$,
\item {\em perfect} if each string in $T$ has at least two $\preceq$-incomparable strict extensions in $T$. 
\end{itemize}
Note that a pruned tree is proper iff it has infinitely many paths.

Let $[T]$ denote the set of {\em paths through $T$}, namely the reals with all its prefixes in $T$. 
We say that $T$ is  {\em positive} if  $\mu([T])>0$.
All trees we consider are pruned and proper, with the exception of \S\ref{VW3LYXitIA} which concerns trees with deadends. 

\begin{thm}\label{7i3g6ty9SJ}
If a random real $z$ computes a proper pruned incompressible tree, then $z\geqT\zj$.
\end{thm}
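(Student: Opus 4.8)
The plan is to prove the contrapositive in quantitative form: if $z$ is \ml random and $z\not\geqT\zj$, then every proper pruned $z$-computable tree $T$ has $\ds(T)=\infty$ — indeed a compressible path — which is precisely the phenomenon of \cite{bslBienvenuP16,treeout,dimtree} with ``sufficiently random'' sharpened to ``not Turing-complete''. Equivalently, assume $T=\Phi^z$ is a proper pruned tree with $\ds(T)<\infty$, fix $c\in\Nat$ with $\ds(T)<c$, so that $K(\sigma)>\abs{\sigma}-c$ for every $\sigma\in T$; I will then compute from $z$ a modulus of convergence for the halting problem, using the standard fact that $z\geqT\zj$ iff $z$ computes a function everywhere bounding the settling-time function of a fixed enumeration of $\zj$.

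Two structural observations set the stage. Since $T$ is pruned, $\sigma\mapsto$(leftmost child of $\sigma$ in $T$) injects $T\cap 2^n$ into $T\cap 2^{n+1}$, so the width $n\mapsto\abs{T\cap 2^n}$ is nondecreasing, and as $T$ is proper it tends to infinity; hence $z$ computes a strictly increasing sequence $n_1<n_2<\cdots$, which we may take to grow as fast as we please, with $\abs{T\cap 2^m}\geq 2^k$ for all $m\geq n_k$. Secondly, $K(\sigma)>\abs{\sigma}-c$ for every $\sigma\in T$ forces, by the Levin--Schnorr theorem, every path of $T$ to be \ml random with constant $c$; that is, $[T]$ is contained in the \pzn class $\mathcal{R}_c=\sqbrad{x}{\forall m\ K(x\restr_m)\geq m-c}$, which moreover has measure $\geq 1-2^{-c}$.

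The engine of the proof is a Kraft--Chaitin construction trying to compress nodes of $T$ using the exponential room created by the width. At level $n_k$ the tree has at least $2^k$ nodes, so a node there is pinned down by $k$ bits (its index among a canonically chosen $2^k$ of them) plus a description of the initial piece of $T$ that produces $T\cap 2^{n_k}$; were that piece describable in fewer than $n_k-k-\bigob{\log n_k}$ bits, we would compress a node of $T$ below $\abs{\sigma}-c$. Consequently incompressibility forces the oracle use $u(n_k)$ for computing $T\cap 2^{n_k}$ from $z$ to be nearly maximal, $u(n_k)\geq n_k-k-\bigob{\log n_k}$. The idea is to convert this tension into a decoder for $\zj$. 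Run, as a plain (oracle-free) prefix-free machine $M$, the following: for every oracle-prefix $\tau$, every $k$, and every stage $s$, if by stage $s$ the computation $\Phi^\tau$ has already exhibited $\geq 2^k$ nodes at a common level and the current approximation $\zj_s$ agrees with a prescribed ``$k$-th guess'' at $\zj\restr_k$, then $M$ enumerates requests shortening each of $2^k$ of those nodes toward length about $n_k-k$. Because $T$ is genuinely $c$-incompressible, for each $k$ this guess condition must, along the true oracle $z$, eventually become false and stay false — otherwise $M$ would compress a node of $T$ past its permitted deficiency. Reading off from $z$ (which computes both $T$ and the relevant uses) the stage after which this has happened for all of $\zj\restr_k$ yields the required modulus, so $\zj\leqT z$.

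The hard part — where an idea beyond this bookkeeping is needed — is controlling the total weight of $M$. Since $M$ may not consult $z$, it must range over all candidate oracle-prefixes $\tau$, and the crude estimate pays a factor $2^{u(n_k)}$ at level $n_k$, which overwhelms the factor $2^{-(n_k-k)}$ coming from the shortened requests, given that $u(n_k)$ is itself forced to be of size $n_k-k-\bigob{\log n_k}$. Closing this gap requires (i) using the width sharply — compressing $2^k$ nodes at level $n_k$, so that the room $2^{k}$ grows unboundedly with $k$, which is exactly where the \emph{proper} hypothesis enters and without which the scheme is vacuous; (ii) choosing the $n_k$ enormous relative to $k$, harmless since they are $z$-computable and only their existence is used; and (iii) discounting the sum over $\tau$ by the randomness of $z$: one weights each $\tau$-branch so that $M$ has total weight $\bigon$, conceding that nodes are compressed only by about $k-\bigon$ rather than fully, and then invokes that $z$ is \ml random (indeed has ample excess), so its prefixes are incompressible, to conclude that the requests actually impinging on $z\restr_{u(n_k)}$ still force $K(\sigma)<\abs{\sigma}-c$ for some $\sigma\in T$. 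Making (i)--(iii) fit, and checking that the extracted guess-stabilization is truly $z$-computable, is the technical core, and it is the same balancing act that underlies the compressible-path constructions cited above.
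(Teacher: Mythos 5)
Your overall plan---use the width of $T$ to set up a Kraft--Chaitin compression of its nodes and turn the impossibility of that compression into a settling-time bound for $\zj$---is in the right spirit, but the proposal has a genuine gap exactly where you yourself locate ``the technical core'': the weight of the machine $M$ is never actually bounded, and the fix you gesture at does not work as stated. Discounting each oracle-prefix $\tau$ by $2^{-\abs{\tau}}$ turns a request for a node $\sigma\in\Phi^\tau\cap 2^{n_k}$ into one of length roughly $n_k-k+u(n_k)$, and since (as you compute) $u(n_k)\geq n_k-k-\bigob{\log n_k}$, this gives no compression below $\abs{\sigma}-c$; the appeal to ample excess of $z$ to conclude that the requests ``still force $K(\sigma)<\abs{\sigma}-c$'' is an assertion, not a mechanism. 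What is actually needed is to prove a \emph{measure} bound rather than to compress along the true oracle: this is the content of Lemma \ref{DP8sKcDJCf}, where $\sigma$ is compressed to deficiency $>k$ only at a stage when the measure of the oracles currently producing $\sigma$ inside the deficiency-$\leq k$ class exceeds $2^{K(k)+k+1+c-\abs{\sigma}}$, so that each compression cost is paid for by permanently ejected, pairwise disjoint oracle measure and the total weight is at most $\sum_k 2^{-K(k)}<1$. The resulting bound $\mu(P^k_\sigma)\leqt 2^{K(k)+k-\abs{\sigma}}$, together with the counting step of Lemma \ref{6xHgSw7WOv} (an oracle producing $2^m$ strings of bounded deficiency at one level lies in $2^m$ of the $P^k_\sigma$), is the quantitative engine your sketch is missing.

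The second gap is the extraction of $z\geqT\zj$. Your ``prescribed $k$-th guess at $\zj\restr_k$'' is never specified, and the claim that $z$ can ``read off the stage after which this has happened'' is unsupported: that the guess condition has become false \emph{and stays false} is a statement about all future stages of the enumeration and of $M$, and is not $z$-decidable as described. The paper resolves this with a dichotomy driven by a repetition-free enumeration $(n_s)$ of $\zj$ and the $z$-computable function $g(m)=\min\sqbrad{n}{\abs{T\cap 2^n}\geq 2^m}$: either $g(n_s)>s$ for almost all $s$, in which case $g$ bounds the settling time of $\zj$ and $z\geqT\zj$ outright; or $g(n_s)\leq s$ infinitely often, in which case $z$ lies in infinitely many components of a difference Solovay test of measure $\leqt 2^{-n_s}$ (Theorem \ref{NtI4AEpWrs}, via Lemma \ref{xtR47lHczE}), so $z$ is not difference random and the Franklin--Ng characterization \eqref{oDyfIYIan} yields $z\geqT\zj$; Theorem \ref{w9Y9wimsv} gives an alternative explicit version in which the $z$-computable functions $g$ and $h$ (the halting stages of $\Phi_s(z;i)$) convert avoidance of a Solovay test into a settling-time bound. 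To complete your argument you would need either to import this difference-randomness machinery or to reproduce that explicit bookkeeping; as written, both the weight bound and the final reduction are missing.
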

This extends the special case of
trees with a computable nondecreasing unbounded lower bound on their width due to \cite{bslBienvenuP16}, and
of perfect trees  under stronger randomness assumptions established in \cite{deniscarlsch21,treeout}.
In contrast:

\begin{thm}\label{YRH2XAwqKL}
Every random real computes a perfect weakly-incompressible tree.
\end{thm}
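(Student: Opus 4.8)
The plan is to produce, uniformly from a \ml random real $z$, a single explicit $z$-computable tree $T=T_z$ all of whose branching occurs at a \emph{fixed} sparse computable set of levels, and then to use the ample excess theorem to show that infinitely many of those levels turn out to be incompressible (no appeal to Theorem~\ref{ODDbi7Cm2s} seems needed). Concretely, fix a computable set $L=\{l_0<l_1<\cdots\}$ with $l_j-l_{j-1}=4^{j}$, and let $T_z$ be the tree that ``copies $z$ off the levels of $L$ and branches freely on $L$'': a string $\tau$ of length $n$ lies in $T_z$ iff $\tau(m)=z\bigl(\,\abs{\{m'<m:\ m'\notin L\}}\,\bigr)$ for every $m<n$ with $m\notin L$, while $\tau(m)$ is unconstrained for $m\in L$. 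Then $T_z\leqT z$, and $T_z$ is pruned and proper since its width at level $n$ is $2^{\abs{L\cap n}}\to\infty$; it is perfect because any $\tau\in T_z$ extends uniquely inside $T_z$ up to the least $l\in L$ with $l\ge\abs{\tau}$, where the two one-bit extensions both lie in $T_z$ and are $\preceq$-incomparable.

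The heart of the matter is the estimate, writing $v(n):=n-\abs{L\cap n}$,
\[
\ds\bigl(T_z\cap 2^{n}\bigr)\ =\ \abs{L\cap n}\ +\ \ds\bigl(z\restr_{v(n)}\bigr)\ +\ O(1).
\]
To establish it, observe that every node of $T_z$ at level $n$ is just $z\restr_{v(n)}$ with $\abs{L\cap n}$ extra bits inserted into the slots $L\cap n$; hence each such node computes $z\restr_{v(n)}$, so its $K$ is at least $K(z\restr_{v(n)})-O(1)$, while the node inserting all zeros is $O(1)$-equivalent to $z\restr_{v(n)}$ (using that $L$ has no two consecutive elements, so $n$ is recoverable from $v(n)$ up to one bit). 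Taking $\ds$ of the whole set $T_z\cap 2^n$ replaces $K$ by the minimum over these $2^{\abs{L\cap n}}$ nodes, which equals $K(z\restr_{v(n)})\pm O(1)$, and the identity follows. It then suffices to find infinitely many $n$ for which $\abs{L\cap n}+\ds(z\restr_{v(n)})$ stays bounded.

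For this, apply the ample excess theorem: since $z$ is \ml random, $\Sigma:=\sum_{v}2^{\ds(z\restr_{v})}<\infty$, so for every $t$ at most $\Sigma\cdot 2^{t}$ values of $v$ satisfy $\ds(z\restr_v)\ge -t$. Applying this with $t=j$ to the interval $I_j:=\{v:\ l_{j-1}-j<v\le l_j-j\}$, i.e.\ exactly the values $v$ with $\abs{L\cap(v+j)}=j$: since $\abs{I_j}=4^{j}>\Sigma\cdot 2^{j}$ for all large $j$, some $v\in I_j$ has $\ds(z\restr_v)<-j$, and then $\ds(T_z\cap 2^{v+j})\le j+(-j)+O(1)=O(1)$ by the displayed identity. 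This produces infinitely many levels of deficiency at most a fixed constant, so $\liminf_n\ds(T_z\cap 2^n)<\infty$: $T_z$ is a proper pruned perfect tree that is weakly-incompressible.

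The step I expect to be the real obstacle — and the one the argument above is engineered around — is justifying that a \emph{fixed} branching set $L$ can work at all. Naively one expects to need the branch levels placed beyond the (non-computable, $z$-dependent) point at which the complexity excess $-\ds(z\restr_v)$ overtakes the number of prior branchings, which a $z$-computable construction cannot locate. What rescues a fixed $L$ is that the ample excess bound is \emph{quantitative}: Chaitin's ``$\ds(z\restr_n)\to-\infty$'' alone is insufficient, but the density estimate $\abs{\{v\in I_j:\ \ds(z\restr_v)\ge -j\}}\le\Sigma\cdot 2^{j}$ is out-run by the fixed super-exponential window $I_j$ regardless of how large the ($z$-dependent) constant $\Sigma$ is. The remaining care is purely bookkeeping: checking the $O(1)$ in the deficiency identity is genuinely a constant (it is, since $L$ is fixed and sparse), and verifying perfectness/properness of $T_z$.
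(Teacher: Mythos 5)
Your proof is correct, but it takes a genuinely different route from the paper's. The paper proves this theorem by a probabilistic construction: it uses the random oracle $z$ to choose a sequence of lengths $\ell_n\in[0,2^{n^2})$ and, at each step, two random extensions of every current leaf, and then verifies via a Solovay test (covering the events ``$\ell_n-\ell_{n-1}$ is compressible'' and ``some chosen extension is conditionally compressible'') that for every \ml random $z$ the resulting tree has bounded deficiency at all levels $\ell_n$ from some point on. You instead reuse the deterministic ``insert free bits at a fixed computable set of positions'' skeleton (essentially the construction the paper uses for Proposition \ref{LhaG7r8Qyk}), but with branching positions so sparse (gaps $4^j$) that the quantitative form of the ample excess theorem --- at most $\Sigma\cdot 2^{j}$ levels $v$ can have $\ds(z\restr_v)\geq -j$ --- is outrun by the window of size $4^j$ in which exactly $j$ branchings have occurred; your deficiency estimate $\ds(T_z\cap 2^n)\leq \abs{L\cap n}+\ds(z\restr_{v(n)})+O(1)$ (only this inequality is needed, not the full identity) then yields a level of absolutely bounded deficiency in every late window. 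Your verification is simpler: no Solovay test, no conditional-complexity counting, and the constant bounding the liminf deficiency is independent of $z$ (only the threshold level depends on $\Sigma$). What the paper's construction buys in exchange is a fatter tree --- roughly $2^{\sqrt{\log n}}$-fat, as remarked after its proof, with the branching structure itself extracted from $z$ --- whereas your tree has width only about $\log_4 n$ and achieves small deficiency at a single level per window; both are more than enough for weak incompressibility, and your perfectness, properness and $T_z\leqT z$ checks are all in order.
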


In \S\ref{d6GV7C9Wxm} we refine our results by replacing {\em incompressibility} with  {\em deficiency growth}.
We have seen that the deficiency $\ds(T\cap 2^n)$ of a tree $T$ 
that is computable by a sufficiently random real has to grow as its width $\abs{T\cap 2^n}$ grows.
Our goal is to determine the necessary deficiency growth given $\abs{T\cap 2^n}$.
An upper bound is given by:

\begin{thm}\label{O65ckpRLDvabb1}
Let $g$ be a computable order with $\forall n,\ g(n+1)\leq g(n)+1$.
Every random real computes a perfect tree $T$  
with $\log |T\cap 2^{n}|=g(n)$ and
\[
\lim_n\parb{\log |T\cap 2^n|- \ds(T\cap 2^n)}=\infty.
\]
\end{thm}

We obtain a general lower bound, of which a representative case is:

\begin{thm}\label{O65ckpRLDvabb}
If a random real $z$ computes a tree $T$ with
\begin{equation}\label{4f229PSV42}
\log \abs{T\cap 2^n}\geq 2\log\log \abs{T\cap 2^n}+ \ds(T\cap 2^n),
\end{equation}
then $z\geqT\zj$. 
\end{thm}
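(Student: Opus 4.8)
The plan is to reduce Theorem~\ref{O65ckpRLDvabb} to (a quantitative form of) Theorem~\ref{7i3g6ty9SJ}, by arguing that condition~\eqref{4f229PSV42} already forces the tree to behave, at each level, like an incompressible tree of controlled width, so the same $\zj$-coding argument applies. First I would set $w_n := \abs{T\cap 2^n}$ and $k_n := \lceil\log w_n\rceil$, so that the hypothesis reads $k_n \geqp 2\log k_n + \ds(T\cap 2^n)$, i.e. $\ds(T\cap 2^n) \leqp k_n - 2\log k_n$. The point of the $2\log\log\abs{T\cap 2^n}$ slack is that $\sum_n 2^{-(k_n - 2\log k_n - \log w_n)} = \sum_n 2^{-(k_n - \lceil\log w_n\rceil)}\cdot k_n^{-2}$ is summable once we also know (which we may, by thinning to levels where $w_n$ strictly increases, or by a standard ``fast levels'' argument) that $k_n \to\infty$ along the relevant subsequence; this is exactly the budget needed to run a Solovay/ample-excess style test while paying an extra $K(n)$-like cost per level.

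The key steps, in order. Step 1: recall the mechanism behind Theorem~\ref{7i3g6ty9SJ}. From a random $z$ computing a proper pruned incompressible tree one extracts $\zj$ by using the tree's levels as a ``search space'': at level $n$ the adversary (the $\zj$-oracle builder) watches the width $w_n$ grow, and incompressibility of $T\cap 2^n$ means the $w_n$ strings at that level are collectively describable in about $\log w_n$ bits beyond an $O(1)$ deficiency, which is far fewer than $n$ bits, so knowing $z\restr_m$ for a suitable $m$ together with $O(\log w_n)$ bits pins down all of level $n$; running this simultaneously for all $n$ and using the randomness of $z$ to bound the total measure of ``bad'' approximations gives a $\zj$-computation. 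Step 2: redo this computation allowing the per-level deficiency to be $\ds(T\cap 2^n)$ rather than $O(1)$. The description of level $n$ now costs $\log w_n + \ds(T\cap 2^n) + K(n) + O(1) \leqp k_n - 2\log k_n + \log w_n + 2\log k_n + O(1) = k_n + \log w_n + O(1)$ --- wait, more carefully: the test weight at level $n$ is $2^{-(\text{bits we must guess correctly})}$, and hypothesis~\eqref{4f229PSV42} is engineered precisely so that after accounting for the $K(n)$ overhead (bounded by $2\log k_n + O(1)$ on the relevant levels, since there $k_n$ is comparable to $n$ up to a fixed computable change of variable) the weights sum to a finite value. Step 3: invoke the Ample Excess Theorem (or a direct Solovay test / Levin--Schnorr argument): since $z$ is random and the total weight is finite, for all but finitely many $n$ the approximation succeeds, and the remaining finitely many levels are handled by a finite amount of non-uniform information, yielding $z\geqT\zj$.

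The main obstacle is Step 2: making the bookkeeping of the $K(n)$-overhead honest. In Theorem~\ref{7i3g6ty9SJ} the deficiency is a constant, so one can afford a crude ``charge $K(n)$ per level'' and still sum, but here the deficiency is allowed to grow, and the whole content of the theorem is that~\eqref{4f229PSV42} is the exact threshold at which ``deficiency growth $+$ overhead'' stays summable. One must therefore (i) restrict attention to a computable sparse set of levels on which $w_n$ genuinely grows and on which $k_n$ and $n$ are polynomially related, so that $K(n) \leqp 2\log k_n + O(1)$ there; (ii) verify that the tree being \emph{proper} guarantees such a sparse growing sequence exists and is $T$-computable (hence $z$-computable); and (iii) check that the $\liminf$/$\sum$ manipulation genuinely needs the coefficient $2$ in front of $\log\log$, which pins down why the theorem is stated with that constant. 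Once the level-selection and the overhead accounting are in place, the rest is the standard randomness-to-$\zj$ extraction already used for Theorem~\ref{7i3g6ty9SJ}.
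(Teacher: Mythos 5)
There is a genuine gap, and it sits exactly where you locate the ``whole content of the theorem''. Your accounting charges an overhead of $K(n)$ per level $n$ and hopes to absorb it into $2\log\log\abs{T\cap 2^n}$ by restricting to a computable sparse set of levels on which $\log\abs{T\cap 2^n}$ and $n$ are polynomially related. This cannot be arranged: the width may grow arbitrarily slowly in $n$ (e.g.\ like $\log n$), so no such levels need exist, and the levels where the width grows are only $z$-computable, so you cannot restrict an effective test to them. In the paper the coefficient $2$ has nothing to do with $K(n)$: Theorem \ref{O65ckpRLDvabb} is the case $g(m)=2\log m$ of Theorem \ref{O65ckpRLDv}, where the test classes are indexed by the \emph{value} $p=\log\abs{\Phi(x;n)}$ (an oracle-dependent quantity folded into the definition of the $\Sigma^0_1$ classes $Q_n^p$), and all that is needed is that $\sum_p 2^{-g(p)}=\sum_p p^{-2}$ converges with a computable modulus, so that $E_n^k=\bigcup_{p\geq h(k)}Q_n^p$ has measure $<2^{-k}$. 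Moreover, the measure bound $\mu(Q^p_\sigma)\leq 2^{c+p-g(p)-\abs{\sigma}}$ that makes this work is not a counting fact about describing the $w_n$ strings of a level in $\log w_n$ bits; it bounds the measure of \emph{oracles} producing such a level, and it requires the strengthened Lemma \ref{jOgzCAVZg3}, whose proof is a compression/cost construction (Kraft--Chaitin--Levin plus the recursion theorem) in which compressing $\sigma$ ejects the offending oracles from the $\Pi^0_1$ class $G$ and the machine weight is charged against the decrease of $\mu(G)$. Nothing in your proposal supplies this ingredient, and the constant-deficiency intuition you import from Theorem \ref{7i3g6ty9SJ} does not extend to an oracle-dependent deficiency threshold without it.

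Separately, your Step 3 does not actually produce $\zj$ from $z$. ``The approximation succeeds at almost every level'' plus finitely much non-uniform information is not a coding mechanism. In the paper the conclusion $z\geqT\zj$ comes from difference randomness: one forms the difference Solovay test $(E_s^{n_s})$ along a computable enumeration $(n_s)$ of $\emptyset'$ and runs a dichotomy with the $z$-computable function $f(k)=\min\sqbrad{n}{\log\abs{\Phi(z;n)}>h(k)}$ --- either $f(n_s)\geq s$ for almost all $s$, in which case $f$ bounds the settling time of $\zj$ and $z\geqT\zj$ directly, or $z\in E_s^{n_s}$ infinitely often, so $z$ is not difference random and then \eqref{oDyfIYIan} (Franklin--Ng) gives $z\geqT\zj$ since $z$ is random. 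Your outline never sets up this interplay between a $z$-computable growth function and a computable enumeration of $\emptyset'$, which is the only place in the argument where $\zj$ is actually extracted.
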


Note that $2\log n$ is a standard computable upper estimate for $K(n)$, and
\[
\ds(\log \abs{T\cap 2^n})=\log \abs{T\cap 2^n}-K(\log \abs{T\cap 2^n})
\]
so Theorem \ref{O65ckpRLDvabb} suggests the possibility that 
$\ds(T\cap 2^n)\leq \ds(\log \abs{T\cap 2^n})$ can replace
\eqref{4f229PSV42} in the statement. 
This is not known and is left as an open question.
It suggests however a direct relation between the  necessary deficiency of $T$
and the deficiency of its size, at each level $n$.

In the case of computable width, a tighter statement is possible.

\begin{thm}\label{O65ckpRLDvabb3}
If a random real $z$ computes a tree $T$ and an order $g$ such that
 $\log \abs{T\cap 2^n}\geq g(n)+\ds(T\cap 2^n)$ and $(\abs{T\cap 2^n})$ is computable, then $z\geqT\zj$.
\end{thm}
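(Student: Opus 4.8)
The plan is to push the effective-measure (``depth'') arguments behind Theorem~\ref{7i3g6ty9SJ} and Theorem~\ref{O65ckpRLDvabb} one step further, using the computability of the width $w(n):=\abs{T\cap 2^n}$ to shrink the slack $2\log\log\abs{T\cap 2^n}$ of \eqref{4f229PSV42} down to an arbitrary order $g$. Recall that random means Martin-L\"of random. Fix Turing functionals $\Phi,\Psi$ with $\Phi^z=T$, $\Psi^z=g$, and a computable use bound $u$ so that from $z\restriction u(n)$ one computes $T\cap 2^{\le n}$ and $g\restriction n$; $w$ itself is computable. The hypothesis says exactly that every $\sigma\in T\cap 2^n$ has $K(\sigma)\ge n-\log w(n)+g(n)$. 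For each $n$ and stage $s$, let $\mathcal I_{n,s}$ be the clopen set of $\tau\in 2^{u(n)}$ for which, by stage $s$, $\Phi^\tau$ has produced a tree $T_\tau$ on the levels $\le n$ with $\abs{T_\tau\cap 2^i}=w(i)$ for all $i\le n$, $\Psi^\tau$ has produced a nondecreasing $g_\tau\restriction n$, and $K_s(\sigma)\ge n-\log w(n)+g_\tau(n)$ for every $\sigma\in T_\tau\cap 2^n$. Since the true $K$ lies below every $K_s$ and $z$ meets the hypothesis, $z\restriction u(n)\in\mathcal I_n:=\bigcap_s\mathcal I_{n,s}$ for all $n$; moreover $\langle\mathcal I_{n,s}\rangle$ is uniformly computable and nonincreasing in $s$, so each $\mathcal I_n$ is $\Pi^0_1$ uniformly in $n$, and $z$ computes $n\mapsto\parb{u(n),\,z\restriction u(n)}$. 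It is precisely here that computable width is used: ``$\abs{T_\tau\cap 2^i}=w(i)$'' is a genuine clopen condition, so no extra slack for ``describing the width'' is incurred --- this is what costs the $\log\log$ term in Theorem~\ref{O65ckpRLDvabb}.

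The crux is an effective measure decay: for each constant $c$ there is $N_c$ such that the measure of $\setbra{[\tau]}{\tau\in\mathcal I_n,\ g_\tau(n)\ge c}$ is at most $2^{-c}$ for all $n\ge N_c$. This is proved, along the lines of the depth arguments of \cite{bslBienvenuP16} and of Theorem~\ref{7i3g6ty9SJ}, by a recursion-theorem construction of a bounded request set: a level-$n$ cross-section $F$ carried by a fibre of oracle-measure $\theta$ allows each of its $w(n)$ nodes to be described by $\log w(n)$ bits (its index inside $F$) together with a pointer costing about $\log(1/\theta)$ bits; the weight thereby committed to $F$ is about $\theta$, and summing over all cross-sections at level $n$ telescopes (the fibres are disjoint), so after discounting each request by a further $2\log n$ bits the Kraft inequality holds while every node $\sigma$ of such an $F$ gets $K(\sigma)\le\log(1/\theta)+\log w(n)+\bigo{\log n}$, contradicting $K(\sigma)\ge n-\log w(n)+c$ unless $\theta$ is very small. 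Computability of $w$ keeps the enumeration effective and fixes the index cost at exactly $\log w(n)$; and since $g$ is an order we have $g(n)\ge c$ for all large $n$, so this applies along $z$ for every $c$. Granting the estimate, $\zj\leqT z$ follows by the decoding argument in the proof of Theorem~\ref{7i3g6ty9SJ}, applied to the $z$-computable sequence $\langle\mathcal I_n\rangle$ of $\Pi^0_1$ classes, of measure tending effectively to $0$, that contain $z$: a Martin-L\"of random trapped in such a sequence computes a modulus of convergence for $\zj$.

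The main obstacle is this measure estimate --- in particular, controlling the contribution of the a priori enormous number of admissible level-$n$ cross-sections. The device is to weight each cross-section's requests by its own fibre measure, so that the total weight at level $n$ is bounded by (a computable discount times) $\sum_F\theta_F\le 1$ rather than by the number of cross-sections, and to amortise the $\log w(n)$ indexing cost over the $w(n)$ nodes of the cross-section --- which is exactly the budget the hypothesis $\log w(n)\ge g(n)+\ds(T\cap 2^n)$ supplies. Secondary points: $g$ is only $z$-computable, handled by running the estimate separately for each threshold $c$ (enough, since an order is unbounded); the width of a pruned proper tree need not be monotone, so one first restricts to a computable subsequence of levels along which $w$ is large; and the usual care is needed with wasted requests and with the two-sided behaviour of the approximations $K_s$.
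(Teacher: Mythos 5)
Your overall skeleton (a measure--decay estimate for the oracles that output a wide cross-section while claiming small deficiency, then a difference/Solovay test tied to an enumeration of $\zj$, then the dichotomy on whether $f(p)=\min\sqbrad{n}{g(n)>p}$ bounds the settling time, with Franklin--Ng converting non-difference-randomness into $z\geqT\zj$) is the same as the paper's. The gap is in the crux estimate itself. Your description scheme is organized per cross-section: each node of a level-$n$ cross-section $F$ carried by a fibre of oracle-measure $\theta_F$ gets about $\log(1/\theta_F)+\log w(n)+2\log n+\bigon$ bits, where $w(n):=\abs{T\cap 2^n}$. What this yields is only an upper bound on each individual $\theta_F$; but the quantity you must bound, $\mu\sqbrad{\tau\in\mathcal{I}_n}{g_\tau(n)\geq c}=\sum_F\theta_F$, is a sum over the distinct cross-sections the functional can output, of which there may be on the order of $\binom{2^n}{w(n)}$, and smallness of each term says nothing about the sum. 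Your ``device'' of weighting the requests by fibre measure rescues the Kraft inequality only; it does not convert per-fibre smallness into a bound on the union, and no pigeonhole over fibres helps because the number of fibres is not controlled. The sum $\sum_F\theta_F$ is exactly the unknown, so at this point the argument is circular.

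Even if you reorganize per string (for $\sigma\in 2^n$ let $\theta_\sigma$ be the measure of oracles in the class whose level-$n$ output contains $\sigma$, so the class has measure $\tfrac{1}{w(n)}\sum_{\sigma}\theta_\sigma$), a pointer-plus-index description of the kind you propose gives only $K(\sigma)\leq\log(1/\theta_\sigma)+\log w(n)+2\log n+\bigon$, and against the hypothesis $K(\sigma)\geq n-\log w(n)+g(n)$ this leaves a residual slack of $\log w(n)+2\log n$ that an arbitrarily slow order $g$ cannot absorb (for positive trees $\log w(n)$ is close to $n$, so the resulting bound on the bad measure is vacuous). This is precisely why the paper does not use a coding-theorem pointer at all: Lemma \ref{jOgzCAVZg3} is proved by a recursion-theorem construction in which a request of length exactly $n-p-c-1$ is issued only when the current approximation to $\mu(Q^p_\sigma\cap G)$ exceeds $2^{c+2+p-n}$, and its cost is charged against measure permanently ejected from the \pz class $G$, so the final per-string bound $\mu(Q^p_\sigma\cap G)\leq 2^{c+p-n}$ carries a fixed additive constant and no $\log w(n)$, $K(n)$ or $2\log n$ overhead; then the factor $w(n)$, counting how many $Q^p_\sigma$ each bad oracle lies in, exactly cancels the $2^{\log w(n)}$ in that bound, leaving $2^{c-g(n)}$. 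Without this ejection-charging mechanism (or an equivalent) your measure estimate is not established, and the remaining steps have nothing to run on. Two smaller points: a Turing functional need not have a computable use bound $u$, so the classes should be defined as sets of oracles as in the paper's $G$ and $Q_n^p$ rather than as subsets of $2^{u(n)}$; and ``a random trapped in a sequence of \pz classes of vanishing measure computes $\zj$'' is not a valid principle on its own --- the components must be tied to an enumeration of $\zj$ and one needs the difference-randomness characterization, as in Theorems \ref{NtI4AEpWrs} and \ref{w9Y9wimsv}. (Also, the $2\log\log$ slack in Theorem \ref{O65ckpRLDvabb} does not come from ``describing the width''; it comes from needing the computable convergence of $\sum_n 2^{-g(n)}$ when the test is indexed by the $z$-computable values $\log\abs{\Phi(x;n)}$.)
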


A similar statement is shown if we replace the computability of 
$(\abs{T\cap 2^n})$  with that of $g$.

Probabilistically, Theorems \ref{ODDbi7Cm2s} to \ref{YRH2XAwqKL} show that while sets of incompressible strings or reals can be effectively generated 
from any typical oracle, there is a  limitation on the fatness/width beyond which this becomes a 
{\em negligible property}. Negligibility is  associated with  structured information and  depth
\cite{Bennett1988} as in the set of halting programs or 
complete consistent extensions of formal arithmetic \cite{Jockusch197201,MR2258713frank,Levin2013FI}. 

\citet{bslBienvenuP16,BienPortDeep24} exhibited a wider array of such classes by extending the notion of depth to \pz classes, namely 
effectively closed sets, and showed that incomplete random reals do not compute members of {\em deep \pz classes}.
In \S\ref{vbNyewP8tQ} we show
that our results fall outside the reach of the  methodology of \cite{bslBienvenuP16,BienPortDeep24}.
For example, the widths $\abs{T\cap 2^n}$ of the members $T$ of 
a \pz class of proper pruned trees must be lower-bounded by a computable order.
Without generalizing  depth outside \pz classes we demonstrate
that a key property of members of deep \pz classes can be found on trees that are not members of any deep \pz class.  

\begin{thm}\label{FJCm7I5aYa}
There exists a perfect pruned incompressible tree $T\leqT\zjj$ which is not a member of any deep \pz class.
\end{thm}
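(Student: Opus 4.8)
The plan is to realize $T$ as a perfect, extremely sparsely branching subtree of the \pzn tree
\[
U_c\ :=\ \sqbradb{\sigma\in\twomel}{K(\sigma\restr_i)\geq i-c\ \text{for every}\ i\leq\abs{\sigma}} ,
\]
for a constant $c$ chosen large enough that the set $[U_c]$ of $c$-incompressible reals has positive Lebesgue measure. Such a $c$ exists: by the Kraft inequality for $K$, the measure of the reals with some prefix of deficiency exceeding $c$ is at most $2^{-c}$, so $[U_c]$ has measure $\geq 1-2^{-c}\geq 1/2$ already for $c=1$. The purpose of this choice is threefold: $U_c$ is computable from $\zj$, since only the values of $K$ are needed; every node of $U_c$ is $c$-incompressible, so any subtree $T\subseteq U_c$ satisfies $\ds(T)\leq c$ for free; and $U_c$ is pruned, since a $c$-incompressible string has a $c$-incompressible one-bit extension.

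To build the subtree, write $\mu$ for the uniform measure. The numbers $\mu([\sigma]\cap[U_c])$ are right-c.e.\ relative to $\zj$, hence computable from $\zjj$, so $\zjj$ can evaluate them to arbitrary precision. Since $[U_c]$ has positive measure, almost every one of its points is a Lebesgue density point, so below every node $\sigma$ with $\mu([\sigma]\cap[U_c])>0$ there are descendants $\tau$, at arbitrarily large levels, with $\mu([\tau]\cap[U_c])>(1-\epsilon)\hspace{0.03cm}2^{-\abs{\tau}}$; at such a $\tau$ both immediate successors still carry large relative measure, so the tree under construction may be split at $\tau$ and continued below each successor. Carrying this out in stages, and at stage $e$ first using $\zjj$ to confirm that the $e$-th partial computable function $g_e$ is a genuine order and then performing the $e$-th round of splits at a common level $\ell_e$ so large that $g_e(\ell_e)>2^{e}$, we obtain a tree $T$ that is perfect, pruned, has $\ds(T)\leq c$, is computable from $\zjj$, and satisfies $\abs{T\cap 2^{\ell_e}}\leq 2^{e}<g_e(\ell_e)$ for every such $e$; in particular $n\mapsto\abs{T\cap 2^n}$ is bounded below by no computable order.

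It remains to see that $T$ is a member of no deep \pz class. Suppose towards a contradiction that $T\in\mathcal P$ with $\mathcal P$ deep. Since being a pruned tree is a \pzn condition on subsets of $\twomel$, the class $\mathcal P':=\mathcal P\cap\{\text{pruned trees}\}$ is again a deep \pz class and still contains $T$. I would then argue, building on the fact recalled in the introduction that the widths of the members of a \pz class of \emph{proper} pruned trees are bounded below by a computable order, that the proper pruned tree $T$, being a member of the deep \pz class $\mathcal P'$, must itself have $n\mapsto\abs{T\cap 2^n}$ bounded below by a computable order, contradicting the second paragraph. What makes this step delicate is that $\mathcal P'$ need not consist solely of proper pruned trees; it may also contain bounded-width pruned trees, i.e.\ sets of prefixes of finitely many reals. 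These would have to be discarded first — using that under the level-by-level enumeration of $\twomel$ their characteristic sequences encode only finitely many reals and so cannot sustain the rate of decay demanded of members of $\mathcal P$ — leaving a genuinely proper part to which the quoted compactness argument applies.

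I expect this last step to be the main obstacle: showing that the extremely thin perfect tree $T$ cannot be concealed in \emph{any} deep \pz class, in particular in a deep \pz class whose own rate of decay is itself a slowly growing order, is where Theorem~\ref{FJCm7I5aYa} goes beyond the quoted statement about \pz classes of proper pruned trees, and is precisely the gap between negligibility and depth that the theorem is meant to exhibit. The construction of the first two paragraphs is comparatively routine: working inside $U_c$ makes the incompressibility requirement automatic, so all that remains there is the $\zjj$-effective density navigation together with the diagonalization against computable orders, the arithmetical legitimacy of each $\zjj$-query and the persistence of splitting despite the sparseness both being easy to check.
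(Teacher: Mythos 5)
Your construction of $T$ itself is plausible, but the proof breaks at exactly the step you flag as the ``main obstacle'', and that step is the entire content of the theorem. Your plan is to infer ``$T$ is in no deep \pz class'' from ``the width of $T$ is bounded below by no computable order'', via the quoted fact about \pz classes of proper pruned trees (Proposition \ref{kpmkMWfCp8}(ii) in the paper). That fact, however, only applies to a \pz class \emph{all of whose members are proper pruned trees}: its proof is a compactness argument that collapses as soon as the class contains even one tree with finitely many paths. A deep \pz class containing your $T$ may freely contain such non-proper trees, and your proposed repair does not work for two reasons. First, you cannot ``discard'' the bounded-width members and stay inside the framework: properness ($\abs{Q\cap 2^n}$ unbounded) is a $\Pi^0_2$ condition, so the proper part of $\mathcal P'$ is in general not a \pz class, let alone a deep one, and compactness is lost. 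Second, the claim that bounded-width members ``cannot sustain the rate of decay demanded of members of $\mathcal P$'' confuses a property of the class with a property of its members: depth requires $\mm(P_n)\leq 2^{-g(n)}$ for the set $P_n$ of all $n$-bit prefixes of members, and imposes no decay condition whatsoever on any individual member, so non-proper trees cannot be excluded this way. The paper states explicitly, right after Proposition \ref{kpmkMWfCp8}, that this proposition does \emph{not} rule out every proper incompressible tree being a member of some deep \pz class, and a later remark stresses that the growth of a tree is essentially unrelated to membership in deep classes; so no argument that uses only the slow growth of $\abs{T\cap 2^n}$ can prove the theorem.

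What the paper does instead is a direct diagonalization against deep classes, and this is the idea missing from your proposal. Deepness of a \pz class is a $\Sigma^0_3$ property, so there is an $h\leqT\zjj$ enumerating a list $(R_{h(e)})$ of all deep \pz classes (viewed inside the space $\TT$ of pruned trees). Take a random $x$ with $x'\leqT\zj$; by the Bienvenu--Porter result \eqref{1UpFa6eE3Z}, no pruned tree $Q\leqT x$ lies in any $R_{h(e)}$, and since $R_{h(e)}$ is closed in the prefix topology, a \emph{finite prefix} of $Q$ already witnesses $\dbra{Q\cap 2^{\leq n}}\cap R_{h(e)}=\emptyset$. The tree $T$ is then built by shattering $x$ on a very sparse infinite set $A$ of positions chosen inside an indifferent set $B\leqT x'$ (Figueira--Miller--Nies), which gives incompressibility; at stage $e$ the finite-shattering approximation is Turing equivalent to $x$, so one can push the next splitting position far enough out that the prefix of $T$ excludes $R_{h(e)}$. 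Everything is computable from $x'\oplus\zjj\equivT\zjj$. If you want to salvage your density-inside-$U_c$ construction, you would need to import this mechanism: a $\zjj$-effective list of all deep classes together with a way of forcing, at each stage, a finite prefix of $T$ out of the $e$-th one — which is precisely the avoidance step your growth-based argument was meant to replace but cannot.
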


The classes in our results are topologically large compared to the
deep \pz classes of \cite{bslBienvenuP16,BienPortDeep24}. For example, with respect to the standard topology (see \S\ref{vbNyewP8tQ}):

\begin{thm}\label{41rMOJConYa}
For each $c$, the class of $c$-incompressible perfect trees which are not members of 
any deep \pz class is comeager in the space of $c$-incompressible trees.
\end{thm}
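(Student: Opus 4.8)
The plan is to recast this as a Baire-category statement and then reduce the genericity requirements to the constructive cores of Theorems~\ref{YRH2XAwqKL} and~\ref{FJCm7I5aYa}. Fix a computable bijection $2^{<\omega}\to\omega$ and identify a tree with its characteristic function, a point of Cantor space $2^{2^{<\omega}}$; with the topology of \S\ref{vbNyewP8tQ} the space $\mathcal{X}_c$ of $c$-incompressible trees is a closed, or at worst $G_\delta$, subspace of $2^{2^{<\omega}}$, hence a Baire space. There are only countably many $\Pi^0_1$ classes, so only countably many deep ones; say they are $\mathcal{Q}_0,\mathcal{Q}_1,\dots$. It therefore suffices to prove: (I) the perfect trees are comeager in $\mathcal{X}_c$; and (II) for each deep $\Pi^0_1$ class $\mathcal{Q}$, the set $\mathcal{X}_c\setminus\mathcal{Q}$ is comeager in $\mathcal{X}_c$ — equivalently, since $\mathcal{Q}$ is closed, $\mathcal{Q}\cap\mathcal{X}_c$ is nowhere dense. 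Intersecting the comeager set of (I) with the countably many comeager sets $\mathcal{X}_c\setminus\mathcal{Q}_e$ gives precisely the class of the theorem, and it is comeager.

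For (I), write being perfect as $\bigcap_{\sigma}P_\sigma$ with $P_\sigma:=\{T\in\mathcal{X}_c : \sigma\notin T \text{ or } \sigma \text{ has two }\preceq\text{-incomparable strict extensions in }T\}$; each $P_\sigma$ is open, so it is enough that each is dense. Given a nonempty basic open set, determined by a finite $c$-incompressible condition $D$: either $\sigma$ is not forced into the tree and we may keep it out, or it is forced in and we must adjoin two incomparable $c$-incompressible extensions of $\sigma$. The latter is always possible above a node reachable by a valid condition, because $\mathcal{X}_c$ carries exactly the ``room to branch while staying incompressible'' exploited in the construction behind Theorem~\ref{YRH2XAwqKL}, which can be seeded at an arbitrary finite $c$-incompressible condition. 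Hence every $P_\sigma$ is dense open and the generic $T\in\mathcal{X}_c$ is perfect.

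For (II), fix a deep $\Pi^0_1$ class $\mathcal{Q}$ with computable witnessing order $h$. By the complexity characterisation of depth (the a priori probability of an extendible initial segment dominates $2^{-K}$), every $T\in\mathcal{Q}$ satisfies $K(\rho)\geqp h(\abs{\rho})$ for each initial segment $\rho$ of the code of $T$. The set $\mathcal{X}_c\setminus\mathcal{Q}$ is open, and to see it is dense we must show every valid finite condition $D$ extends to a $c$-incompressible tree avoiding $\mathcal{Q}$. This is done exactly as in the proof of Theorem~\ref{FJCm7I5aYa}, run above $D$ instead of above the empty condition: that construction produces a perfect $c$-incompressible $T\supseteq D$ lying in no deep $\Pi^0_1$ class, the engine being that along a subsequence of ``checkpoint'' lengths $\ell$ one contracts the finite subtree $T\cap 2^{<\ell}$ to nearly the minimal width compatible with $c$-incompressibility (as in the controlled-width construction of Theorem~\ref{YRH2XAwqKL}), so that the corresponding initial segment of the code of $T$ has complexity below any prescribed computable order at some checkpoint, hence falls out of the extendible part of $\mathcal{Q}$; equivalently, the same contraction drives the width of $T$ beneath the computable lower bound that — as recalled in \S\ref{vbNyewP8tQ} — every member of a $\Pi^0_1$ class of proper pruned trees must obey. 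Thus $T\notin\mathcal{Q}$, and since $\mathcal{Q}$ is closed a basic neighbourhood of $T$ witnesses the density of $\mathcal{X}_c\setminus\mathcal{Q}$.

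The main obstacle is the density step of (II): one must check that the single finite-extension construction of Theorem~\ref{FJCm7I5aYa} is flexible enough to start from an arbitrary finite $c$-incompressible condition, so that the associated forcing really has the required dense sets; and, for (I), that genuine branching is always available above a valid condition. The underlying tension is that $c$-incompressibility forbids the code of the tree from ever being globally simple — incompressible strings are not producible by short programs, so a proper $c$-incompressible tree never has a compressible code — so escaping a deep class must be engineered by making the code simple only at isolated checkpoint lengths while branching enough in between to keep the tree on course toward perfection; arranging both above a given condition is the technical heart of the argument.
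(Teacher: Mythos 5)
Your overall frame — decompose comeagerness into countably many dense requirements (perfection plus avoidance of each deep class) inside the space of $c$-incompressible pruned trees — is sound and is essentially the paper's approach, which runs a Banach--Mazur game in the space $\Ts_c$ of pruned subtrees of $P_c$; these two formulations of comeagerness are interchangeable. Your step (I) is also essentially right: branching above any finite condition is available because, by Lebesgue density, any leaf with an extension in $P_c$ has an extension $\tau$ with $\mu_\tau(P_c)>1/2$, whence both children of $\tau$ are extendible in $P_c$; this is the content of Lemma \ref{jaHz524fAy} (not really of Theorem \ref{YRH2XAwqKL}, which is a different, probabilistic construction).

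The genuine gap is your density step in (II). The mechanism you invoke — contract the width at checkpoint levels so that the code of $T\cap 2^{\leq\ell}$ has complexity below the depth order of $\QQ$, ``equivalently'' drive the width below the computable lower bound of Proposition \ref{kpmkMWfCp8} — is not how Theorem \ref{FJCm7I5aYa} is proved, and it cannot work. First, Proposition \ref{kpmkMWfCp8}(ii) applies only to \pz classes consisting of proper pruned trees, and the paper explicitly notes (just before Theorem \ref{FJCm7I5aY}) that a deep class containing $T$ may contain non-proper trees, so there is no computable width bound to dip under. Second, the complexity dip is impossible: from the code of $T\cap 2^{\leq n}$ one can recover a leaf $\sigma\in T\cap 2^{n}$, so for a $c$-incompressible tree this code has complexity at least $n-c-\bigon$ at every level, however narrow the tree is; since a deep class may have an arbitrarily slow computable depth order (for instance the deep class of incompressible arrays used in Corollary \ref{sjbcegRm4f}, of which sufficiently fat incompressible trees are members), no contraction at isolated checkpoints certifies escape --- indeed no width/complexity pattern of the tree alone can, precisely because incompressible trees do lie in deep classes. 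The paper's density step is computational rather than combinatorial: given a finite condition with leaves $Q\subseteq 2^{\ell}$, Lemma \ref{jaHz524fAy} (Lebesgue density plus the low basis theorem) extends the leaves and appends a single random tail $z$ with $z'\equivT\zj$, producing a tree $T_z\equivT z$ inside $P_c$ with $z\not\geqT\zj$; by \eqref{1UpFa6eE3Z} this $T_z$ belongs to no deep \pz class, and closedness of the deep class (as in \eqref{MADj6DIWrRa}) yields a finite prefix of $T_z$, extending the given condition, which already forces avoidance. This is exactly the ``flexibility above an arbitrary condition'' you identify as the main obstacle, and it is supplied by Lemma \ref{jaHz524fAy}, not by width contraction; as written, your proof of (II) does not go through.
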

Our examples can also be found inside $\Pi^0_2$ classes.
Similarly, we show that some results of \cite{bslBienvenuP16} about negligible \pz classes hold more 
generally for negligible $\Pi^0_2$ classes. 

\subsection{Relevant literature}\label{HwNnMSaQnJ}
The computational properties of sets of incompressible strings and their relationship with random reals 
are complex. 
This topic has recently gained focus, especially in relation to models of second order arithmetic 
\cite{deniscarlsch21,treeout,pamiller}.
We  review  several known facts. 

One way to obtain a perfect incompressible tree from any random real is to allow arbitrary bits on an infinite set of chosen positions of the real. This was studied in \cite{Indiffexn101} but the required set of positions is noneffective.  Alternatively one can obtain a perfect incompressible tree from two random reals $x,y$, by mixing the segments of two random reals with respect to a computable partition \cite[Lemma 2.6]{BLNg08}.  Although for each random $x$ there exists $y$ such that the resulting tree is incompressible, the class of $(x,y)$ with this property is null.

Since incompressibility is a \pz property, reals that encode complete extensions of Peano arithmetic ($\PA$ reals)  compute incompressible reals, 
trees and sets of strings \cite[\S 4]{KjosHanssenMStrans}. 
However there are perfect pruned incompressible trees which do not compute $\PA$ reals  
\cite{treeout, pamiller}. 

Incompressible trees with deadends can be computationally even weaker:
\begin{thm}[Essentially \citet{luliumajma}]\label{t6A4XwWbKX}
There exists a positive incompressible tree (with deadends) which does not compute any random real.
\end{thm}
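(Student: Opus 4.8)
The plan is an effective forcing/priority construction that builds $T$ together with, for each index $e$, a \ce\ml test $\{V^e_i\}_i$ meant to trap $\Phi_e^T$; the construction is allowed to create deadends, which is precisely the freedom that the pruned case (Theorem~\ref{7i3g6ty9SJ}) lacks. A \emph{condition} $q$ is a finite subtree of $\twolo$ whose $\preceq$-maximal strings lie at one level $\ell_q$, some designated \emph{live}; write $\nu(q)$ for $2^{-\ell_q}$ times the number of live leaves (an upper bound on the eventual $\mu([T])$), and refine a condition by extending its live leaves and possibly declaring leaves dead. Fixing $\epsilon>0$ and $c$ with $\epsilon+2^{-c}<1$, we start from the full tree with measure budget $1$, reserve $2^{-c}$ for incompressibility, and allot $R_e$ a budget $\beta_e$ with $2^{-c}+\sum_e\beta_e<1-\epsilon$. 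Keeping $\nu(q)\ge\epsilon$ throughout forces $\mu([T])\ge\epsilon>0$, so $T$ is positive; incompressibility is handled reactively — whenever a string currently in $T$ is observed to have $K(\sigma)<|\sigma|-c$ it is killed, and since the minimal such strings are prefix-free with $\sum 2^{-|\sigma|}<2^{-c}$ this costs at most the reserve and yields $\ds(T)\le c$.

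Requirement $R_e$ demands: $\Phi_e^T$ is not a total $\{0,1\}$-valued function, or $\Phi_e^T\in\bigcap_i V^e_i$; if all are met then $T$ computes no random real. To meet $R_e$, for each $i$ one looks for a stage and a refinement $q'$ of the current condition, of cost within the allotted share, such that $\Phi_e^{\cdot}\restr_{m}$ takes at most $2^{m-i}$ values over all valid completions of $q'$ (for a chosen large $m$); enumerating those length-$m$ cylinders into $V^e_i$ keeps $\mu(V^e_i)\le 2^{-i}$ and guarantees $\Phi_e^T\in V^e_i$ whatever the realized completion is. If instead $\Phi_e^T$ is forced partial on every subcondition of measure $\ge\epsilon$, then $R_e$ holds trivially.

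The crux is the following genericity lemma: from any condition $q$ with $\nu(q)\ge\epsilon$ there is a refinement whose cost is within the allotted share which either forces $\Phi_e^T(n)\un$ for some $n$, or confines $\Phi_e^{\cdot}\restr_{m}$ (for a suitable $m$) to so few values over all valid completions that the capture at level $m$ is permanent. This is delicate for two reasons. First, naive pruning cannot cheaply thin the set of possible images: the measure of $[q]$ may be spread over exponentially many value-classes of minuscule measure, so that deleting all but a few would cost nearly all of $\nu(q)$; one must instead isolate a structural reason, in the spirit of bushy-tree forcing, why a bounded-cost restriction does the job — the guiding intuition being that otherwise a positive-measure family of valid completions of $q$ would each extract a random real through $\Phi_e$, contradicting a counting bound. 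Second, the notion of ``valid completion'' must absorb the ongoing incompressibility pruning: a string of $q'$ later found overcompressed must be killed (incompressibility has priority), which can take $\Phi_e^T\restr_m$ outside the enumerated cylinders and force $R_e$ to re-act; one arranges that each such re-action, triggered by an overcompressed string at level $\ell$, costs only $O(2^{-\ell})$, so the total extra cost stays $O(2^{-c})$. Balancing these three demands — positivity of $[T]$, bounded deficiency of $T$, and validity of the tests — within the global budget is the main obstacle, and is exactly where the argument adapts the technique of \citet{luliumajma}; granting the lemma, a routine priority construction over the $R_e$ produces the required $T$.

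Finally, one may note that the class of $\epsilon$-fat $c$-incompressible trees, coded as subsets of $\twolo$, is a nonempty \pz class — $T_c=\{\sigma:\forall\tau\preceq\sigma,\ K(\tau)\ge|\tau|-c\}$ is a member — so the theorem is a basis-type statement for it. No standard basis theorem yields it: a nonempty \pz class can consist entirely of random reals, and the low or hyperimmune-free basis theorem applied here may return a tree that, under the coding, itself computes a random; the construction uses the tree structure of the members to do better.
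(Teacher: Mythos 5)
Your budget accounting for positivity and incompressibility is sound (killing the minimal strings with $K(\sigma)<|\sigma|-c$ costs at most $\sum_\sigma 2^{-K(\sigma)-c}\leq 2^{-c}$ of measure, exactly as in the paper's Proposition \ref{qbqxquZBUY}), but the proposal does not prove the theorem: the ``genericity lemma'' you defer is not a technical step to be granted, it \emph{is} the theorem. Everything hard about the statement --- that a single oracle $T$ of this kind need not compute a random real --- is concentrated in the claim that from any condition one can, at bounded measure cost, either force $\Phi_e^T$ partial or confine $\Phi_e$ to few values over all valid completions. The intuition you offer for it does not stand up: the valid completions of a condition form a (countable, or at any rate non-measured) family of \emph{trees}, i.e.\ of individual oracles, not a positive-measure set of reals, so there is no counting or measure-theoretic contradiction in a great many completions computing random reals --- indeed plenty of positive $c$-incompressible trees do compute randoms (e.g.\ ones whose leftmost path is random), which is why no basis-theorem or naive forcing argument applies, as you yourself note at the end. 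Likewise the claim that each re-action forced by a later compression event costs only $O(2^{-\ell})$ is asserted, not established; the interaction between the $\Sigma^0_1$ stream of compression facts and the $\Phi_e$-requirements is precisely where such a direct construction would have to do real work. In effect you are proposing to re-prove Liu's theorem fused with the tree construction, while citing Liu only as an inspiration for the missing lemma.

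The paper takes a different and much shorter route: it uses Liu's result as a black box rather than re-running its machinery. Liu (Theorem \ref{9pCjcd26em}) provides a set $G\subset\Nat\times\Nat$ of finite weight that computes no random real and almost contains every c.e.\ set of finite weight. The paper then defines the single c.e.\ set $D=\bigcup_k\{(|\sigma|,\langle\sigma,k\rangle):K(\sigma)\leq|\sigma|-k\}$ of compression facts (finite weight, by the same estimate you use) and shows that \emph{any} finite-weight superset $X\supseteq D$ computes a positive incompressible tree: take $T_k=\{\rho:(|\sigma|,\langle\sigma,k\rangle)\notin X\ \text{for all}\ \sigma\prec\rho\}$, which is $X$-computable and $k$-incompressible because $D\subseteq X$, and is positive for some $k$ since $\sum_k\mu(2^\omega-[T_k])\leq\wgt{X}<\infty$. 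Applying this to $X=D\cup G\equiv_T G$ gives a positive incompressible tree $T\leq_T G$, which therefore computes no random real. If you want to salvage your approach, the honest move is either to carry out the missing forcing lemma in full (essentially redoing Liu's analysis of majorizing martingales) or to restructure the argument as the paper does, isolating the c.e.\ weight-coding step and quoting Liu for the hard computational fact.
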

We derive this striking fact in \S\ref{VW3LYXitIA} based on \citet{luliumajma}.
The pruned positive incompressible trees can also be separated from the perfect pruned incompressible trees, in terms of computational power \cite{treeout}: 
there exists a pruned perfect incompressible tree $T$ such that no $T$-\ce set of incompressible strings $D$ has $2^n=\bigo{\abs{D\cap 2^n}}$.
The work on deep classes \cite{bslBienvenuP16} applies to the special case of 
{\em effectively-proper} trees $T$, where the width $\abs{T\cap 2^n}$  is lower-bounded by a computable order:
incomplete randoms do not compute any effectively-proper incompressible tree. 
By \cite{dimtree} there exists a pruned (effectively) proper incompressible tree which does not
compute any perfect incompressible tree with computable oracle-use.

\section{Incompressible arrays}

We prove positive and negative results on the fatness of sets that can be computed by incomplete randoms, which are Theorems \ref{ODDbi7Cm2s} and \ref{24beca61} from \S\ref{yP4cTWTJZG}.

Let $\leqt$ denote inequality up to a positive multiplicative constant. 

Recall that $D\subseteq\twomel$ is 
{\em $g$-fat} if $\max_{i\leq n} \abs{D\cap 2^i}\geq g(n)$.
We prove Theorem \ref{ODDbi7Cm2s}.

\begin{thm}\label{9NoMkmNVvP}
Every random real computes a $n/(\log n)^2$-fat incompressible subset of $\twomel$.
\end{thm}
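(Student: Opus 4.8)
The plan is to split the fatness requirement off from the incompressibility requirement: first reduce $g$-fatness (for $g(n)=n/(\log n)^2$) to producing, uniformly in the random oracle, arbitrarily long runs of equal-length incompressible strings, and then extract those runs from blocks of $x$ whose location and length are tuned with the ample excess lemma.

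First I would set up the reduction. Fix a computable increasing sequence of target lengths $\ell_0<\ell_1<\cdots$ with $\ell_{j+1}\leq 2\ell_j$ and $\ell_j\to\infty$. It suffices to produce, computably from $x$ and uniformly in $j$, a family $D_j\subseteq 2^{\ell_j}$ with $\ds(D_j)\leqp c$ for a single constant $c$ (possibly depending on $x$) and $|D_j|\geq g(\ell_{j+1})$; then $D:=2^{\leq \ell_0}\cup\bigcup_j D_j$ works. Indeed $D\leqT x$ — to decide $\sigma\in D$ check $|\sigma|\leq\ell_0$, or else locate the unique $j$ with $|\sigma|=\ell_j$ (the $\ell_j$ are computable and strictly increasing) and consult $D_j$ — and $\ds(D)<\infty$, and for each $n$ the index $j$ with $\ell_j\leq n<\ell_{j+1}$ witnesses $\max_{i\leq n}|D\cap 2^i|\geq|D_j|\geq g(\ell_{j+1})\geq g(n)$, while $n<\ell_0$ is covered by $2^{\leq\ell_0}\subseteq D$. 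So the whole problem is to build the families $D_j$.

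For the level $\ell=\ell_j$ I would mark off a contiguous block of $x$, say $B=x\restr_{[a,a+r\ell)}$ with $a=a_j$ the position reached so far and $r=r_j\geq 2\,g(\ell_{j+1})$, and take $D_j$ to be a suitably large subfamily of the $r$ consecutive length-$\ell$ sub-blocks $R_i:=x\restr_{[a+i\ell,\,a+(i+1)\ell)}$, $i<r$. The key elementary estimate is the telescoping deficiency bound
\[
\ds(R_i)\ \leqp\ \ds\!\parb{x\restr_{a+(i+1)\ell}}-\ds\!\parb{x\restr_{a+i\ell}},
\]
which follows from $K\parb{x\restr_{a+(i+1)\ell}}\leqp K\parb{x\restr_{a+i\ell}}+K(R_i)$ and the definition of deficiency; the important feature is that the implied constant is absolute (no $K(\ell)$, no $K(r)$, no address cost). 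Summing over $i<r$ telescopes to $\sum_{i<r}\ds(R_i)\leqp \ds\parb{x\restr_{a+r\ell}}-\ds\parb{x\restr_{a}}+\bigo{r}$. The randomness of $x$ now enters through the ample excess lemma, $\sum_n 2^{\ds(x\restr_n)}<\infty$: this both forces $\ds(x\restr_n)\to-\infty$ and controls how often $\ds(x\restr_{\bullet})$ can increase, which lets me choose the running positions $a_j$ and block lengths $r_j\ell_j$ ($x$-computably, pressing on to infinity) so that the displayed telescoping sum is small, hence so that all but a vanishingly small fraction of the $R_i$ at level $\ell_j$ have deficiency bounded by one constant $c$ not depending on $j$; the remaining constantly-many sub-blocks are dropped, leaving at least $r_j/2\geq g(\ell_{j+1})$ of them for $D_j$.

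The delicate point — and the place where the precise rate $n/(\log n)^2$ is pinned down — is exactly this last step. Incompressibility is only co-c.e., so one cannot literally "keep the incompressible $R_i$"; instead the parameters $a_j,r_j$ and the set of retained indices must be fixed from bounded information about $x$, after which the ample excess lemma is used to certify, after the fact, that the retained sub-blocks are genuinely incompressible with a uniform bound. Pushing this certification through with only a polylogarithmic loss in the count — so that $|D_j|$ can still be taken of size $g(\ell_{j+1})\approx \ell_j/(\log\ell_j)^2$ while $\ds(D_j)$ stays $\leqp c$ — is the technical heart of the argument, and it is also what prevents the construction from reaching superlinear orders, consistently with the impossibility result stated after this theorem.
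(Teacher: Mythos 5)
Your opening reduction to levelwise families $D_j$ is fine, but the step you defer as ``the technical heart'' is exactly the content of the theorem, and the mechanism you propose for it does not deliver it. The telescoping inequality $\ds(R_i)\leqp \ds(x\restr_{a+(i+1)\ell})-\ds(x\restr_{a+i\ell})$ is correct, but after summing it only controls an \emph{average} with a $+O(r)$ slack, and averages do not control maxima here: since $K(R_i)\leq \ell+K(\ell)+O(1)$, each block's deficiency is bounded below by about $-K(\ell)$, so a Markov-type count applied to $\sum_i\ds(R_i)\leq \Delta+O(r)$ only limits how many blocks have deficiency well above $K(\ell_j)$ --- it cannot yield ``all but constantly many blocks have deficiency $\leq c$'' for a fixed constant $c$, no matter how the endpoints $a_j,r_j$ are tuned with the ample excess lemma. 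Worse, the statement you would need is simply false for your parameters: with computable lengths $\ell_j$ one has $K(\ell_j)\leqp 2\log j$, and by the counting theorem the proportion of strings in $2^{\ell_j}$ with deficiency $>c$ is of order $2^{-c-K(\ell_j)}$, which is only polylogarithmically small in $\ell_j$; since disjoint blocks of $x$ behave like independent uniform strings, among $r_j\approx \ell_j/(\log\ell_j)^2$ consecutive blocks the oracles for which all blocks have deficiency $\leq c$ form a set of measure roughly $\exp(-2^{-c}r_j2^{-K(\ell_j)})\to 0$, so the undropped construction fails for almost every $x$, and (as you note yourself) incompressibility is $\Pi^0_1$, so you cannot decide computably in $x$ which blocks to drop. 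Prefix deficiencies of a random real really do rise by $\Theta(\log)$ amounts inside any long window (Miller--Yu oscillations), which is exactly the compressible-block phenomenon your certification would have to exclude.

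The missing idea, which is how the paper proceeds, is to use the oracle not to locate blocks inside $x$ at (essentially) computable positions and lengths, but to \emph{randomize the length itself}: choose $\ell_n$ uniformly in $[2^n,2^{n+1})$, so that $K(\ell_n)\geq n\approx\log\ell_n$ except with probability $O(2^{-K(n)})$, and then choose $p_n=2^n/n^2$ random strings of length $\ell_n$; conditioned on $K(\ell_n)\geq n$, each chosen string fails $K(\sigma)\geq\ell_n$ with probability about $2^{-n}$, so the level-$n$ failure probability is $O(2^{-K(n)}+2^n/n^2\cdot 2^{-n})=O(2^{-K(n)}+1/n^2)$, which is summable; the failure events form a Solovay test, so every random oracle succeeds from some level on. It is the randomness of the length (making $2^{-K(\ell)}\approx 1/\ell$ rather than $1/\mathrm{poly}(\log\ell)$) that buys the rate $n/(\log n)^2$, and the Solovay-test analysis that replaces the after-the-fact certification you were hoping to extract from the ample excess lemma.
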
\begin{proof}
We give a probabilistic computable construction of a 
$n/(\log n)^2$-fat set $F\subseteq \twomel$ and show that it is incompressible with probability 1.
For each $n$ we randomly choose:
\begin{itemize}
\item $\ell_n\in [2^n, 2^{n+1})$ so $\log\ell_n=n$
\item $F_n\subseteq 2^{\ell_n}$ of size $p_n:=2^n/n^2$ and let $F\cap 2^{\ell_n}:=F_n$.
\end{itemize}
Then $\abs{F\cap 2^{\ell_n}}=p_n$ and $\max_{i\leq 2^n} \abs{F\cap 2^i}=p_n$
so $F$ is $n/(\log n)^2$-fat. 

The construction is a total Turing functional $\Phi$ 
where each oracle $z$ outputs an array $\Phi(z;n)=F_n(z)$ and corresponds to a unique choice
$(\ell_n(z)), (F_n(z))$ of $(\ell_n), (F_n)$. 

Let $L_n:=\sqbrad{z}{K(\ell_n(z))<n}$. By the counting theorem 
\[
\abs{\sqbrad{\ell\in [2^n, 2^{n+1})}{K(\ell)<n}}\leqt 2^{n-K(n)}
\hspace{0.3cm}\textrm{and}\hspace{0.3cm}
\abs{\sqbrad{\sigma\in 2^{\ell}}{K(\sigma)\leq \ell}}\leq 2^{\ell-K(\ell)}.
\]
So the probability of $K(\ell_n)<n$ is $\leqt 2^{-K(n)}$ and  $\mu(L_n)\leqt 2^{-K(n)}$.

Assuming $K(\ell_n)\geq n$,  the probability that we randomly pick one $\sigma\in 2^{\ell_n}$ with $K(\sigma)<\ell_n$ 
is $\leq 2^{-n}$. So while we randomly pick $p_n$ strings from $2^{\ell_n}$ without replacement,
the probability that a given pick $\sigma$ has $K(\sigma)<\ell_n$ remains bounded by
$2^{\ell_n-n}/(2^{\ell_n}-p_n)<2^{1-n}$.

So the probability that at least one of the $p_n$ strings we picked from $2^{\ell_n}$
has $K(\sigma)<\ell_n$ is 
\[
\leq p_n\cdot 2^{1-n}=2/n^2.
\]
Let $B_n:=\sqbrad{z}{\exists \sigma\in F_n(z), K(\sigma)< \ell_n(z)}$
and note that $(L_n\cup B_n)$ is \sz uniformly in $n$.

 Since $\forall z\in \twome-L_n,\ K(\ell_n(z))\geq n$
 we get $\mu(B_n-L_n)\leq 2/n^2$ so
\[
\mu(L_n\cup B_n)=\mu(L_n)+\mu(B_n-L_n) \leqt 2^{-K(n)}+2/n^2
\]
and $\sum_n \mu(L_n\cup B_n)<\infty$.  So $(L_n\cup B_n)$  is a Solovay test.
If $z$ is random, $z\not\in L_n\cup B_n$ for almost all $n$ and 
$F(z):=\bigcup_n (F_n(z))$ is an $n/(\log n)^2$-fat set incompressible strings.
\end{proof}
Towards Theorem \ref{24beca61} from \S\ref{yP4cTWTJZG}, we first review the notion of difference randomness from \citep{FrNgDiff} (also see \cite[Definition 2.1]{DenjoyBHMN14}). 
\begin{defi}
Let $P$ be a \pz class and $U_i$ be a uniform sequence of $\Sigma^0_1$ classes. 
We say that
\begin{itemize}
\item $(P\cap U_i)$ is a {\em difference test} if $\mu(P\cap U_i)\leq 2^{-i}$
\item $(P\cap U_i)$ is a {\em difference Solovay test} if $\sum_i \mu(P\cap U_i)<\infty$
\item $z$ {\em avoids} $(P\cap U_i)$ if $z\not\in P\cap U_i$ for infinitely many $i$
\item $z$ {\em strongly avoids} $(P\cap U_i)$ if $z\not\in P\cap U_i$ for all but finitely many $i$
\item $z$ is {\em difference-random} if it avoids all difference tests. 
\end{itemize}
\end{defi}
\citet{FrNgDiff} showed that
\begin{equation}\label{oDyfIYIan}
\textrm{if $z$ is random then it is  difference-random iff  $z\not\geq_T\zj$.}
\end{equation}
Difference randomness is also characterized by difference Solovay tests:
\begin{lem}\label{xtR47lHczE}
The following are equivalent for  real $z$:
\begin{enumerate}[(i)]
\item $z$ is difference-random
\item $z$ avoids all  all difference Solovay tests
\item $z$ strongly avoids all difference Solovay tests.
\end{enumerate}
\end{lem}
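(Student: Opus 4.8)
The plan is to prove the three implications (iii) $\Rightarrow$ (ii), (ii) $\Rightarrow$ (i), and (i) $\Rightarrow$ (iii). The first is immediate: strongly avoiding a test --- being outside $P\cap U_i$ for all but finitely many $i$ --- in particular means being outside it for infinitely many $i$. For (ii) $\Rightarrow$ (i), any difference test $(P\cap U_i)$ is automatically a difference Solovay test, since $\mu(P\cap U_i)\leq 2^{-i}$ forces $\sum_i\mu(P\cap U_i)\leq 2<\infty$; hence if $z$ avoids every difference Solovay test it avoids every difference test, which is the definition of difference-randomness. All the work is in (i) $\Rightarrow$ (iii).

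For (i) $\Rightarrow$ (iii), fix a difference Solovay test $(P\cap U_i)$ and put $c:=\sum_i\mu(P\cap U_i)<\infty$. The device is the sequence of \emph{counting} open sets
\[
V_j:=\sqbrad{x}{\absb{\{i:x\in U_i\}}\geq j},
\]
a uniform sequence of $\Sigma^0_1$ classes: membership $x\in V_j$ is witnessed by $j$ distinct indices $i_1<\cdots<i_j$ and a stage $s$ with $x\in U_{i_1,s}\cap\cdots\cap U_{i_j,s}$, a clopen condition enumerable uniformly in $j$. With $g(x):=\absb{\{i:x\in U_i\}}\in[0,\infty]$, so that $\{x:g(x)\geq j\}=V_j$, monotone convergence gives $\int_P g\,d\mu=\sum_i\mu(P\cap U_i)=c$, and Markov's inequality then yields $\mu(P\cap V_j)\leq c/j$ for every $j$. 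Choosing a constant $d$ with $2^d\geq c$, the sequence $\parb{P\cap V_{2^{j+d}}}_j$ has $\mu\parb{P\cap V_{2^{j+d}}}\leq c\cdot 2^{-j-d}\leq 2^{-j}$, so it is a genuine difference test built from the same $\Pi^0_1$ class $P$.

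To conclude, suppose $z$ is difference-random; then $z$ avoids $\parb{P\cap V_{2^{j+d}}}_j$, so $z\notin P\cap V_{2^{j+d}}$ for some $j$. If $z\notin P$ then $z\notin P\cap U_i$ for all $i$; otherwise $z\in P$ forces $z\notin V_{2^{j+d}}$, that is $\absb{\{i:z\in U_i\}}<2^{j+d}$, so $z\in U_i$ --- and hence $z\in P\cap U_i$ --- for only finitely many $i$. In either case $z$ strongly avoids $(P\cap U_i)$, which proves (i) $\Rightarrow$ (iii). The one delicate point, and the reason this departs from the textbook Solovay-test characterization of \ml randomness, is that $\mu(P\cap U_i)$ is a difference of left-c.e.\ reals rather than left-c.e., so one has no effective control of the tail of the test: there need be no computable $n_k$ with $\sum_{i\geq n_k}\mu(P\cap U_i)\leq 2^{-k}$. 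The counting sets $V_j$ finesse this, being manifestly $\Sigma^0_1$ irrespective of how the measures behave, with the unknown constant $c$ absorbed into a finite shift of the index.
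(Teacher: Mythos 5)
Your proof is correct and follows essentially the same route as the paper: both convert a difference Solovay test into a genuine difference test via the counting sets $\{x : |\{i : x\in U_i\}|\geq j\}$ and then invoke difference-randomness. Your version merely argues (i)$\Rightarrow$(iii) directly with Markov's inequality and an explicit constant shift $d$, where the paper runs the contrapositive with the threshold $2^{i+1}$ after normalizing the sum to be at most $2$.
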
\begin{proof}
Since difference tests are also Solovay tests
we get (iii)$\to$(ii)$\to$(i). 

For $\neg$(iii)$\to\neg$ (i) suppose that
 $(P\cap U_i)$ is a difference Solovay test and $z\in P\cap U_i$ for infinitely many $i$ and let
 consider the uniform sequence of $\Sigma^0_1$ classes
 \[
 V_i:=\sqbrad{x}{\abs{\sqbrad{j}{x\in U_j}}>2^{i+1}}
 \]
 and note that $z\in\cap_i \parb{P\cap V_i}$.
Since $\sum_i\mu(P\cap U_i)<\infty$ we have $\exists c\ \forall i,\ \mu(P\cap V_{i+c})\leq 2^{-i}$. 

So $(P\cap V_{i+c})$ is a difference test and $z$ is not difference random.
\end{proof}

We then prove a Lemma, which will be also useful in subsequent sections. 
When $\Phi$ is a Turing functional, for simplicity of notions we do not assume that writing $\ds(\Phi(x;n))\leq k$ implies $\Phi(x;n)\downarrow$. Instead if $\Phi(x;n)\uparrow$ then $\ds(\Phi(x;n))\leq k$ automatically holds for all $k$. 
\begin{lem}\label{DP8sKcDJCf}
Let $\Phi$ be a Turing functional with $\Phi(x;n)\subseteq 2^n$ and let
\begin{align*}
P^k &:= \sqbrad{x}{\ds(\Phi(x;n))\leq k\text{ for all }n}, \\
P^k_{\sigma} &:= \sqbrad{x\in P^k}{\sigma\in \Phi(x;|\sigma|)}.
\end{align*}
Then $\mu(P^k_{\sigma})\leqt 2^{K(k)+k-|\sigma|}$.
\end{lem}\begin{proof}
Let $P^k(s)$ and $P^k_{\sigma}(s)$ be the approximation of $P_k$ and $P^k_\sigma$ at stage $s$, that is, 
\[
P^k(s) := \sqbrad{x}{\forall n\leq s,\ \ds_s(\Phi_s(x;n))\leq k},
\]
\[
P^k_{\sigma}(s)\ := \sqbrad{x\in P^k(s)}{\sigma\in \Phi_s(x;|\sigma|)}.
\]
We define a construction that runs in stages and involves actions 
\begin{equation}\label{DdEzESnqea}
\textrm{$\ds_{s+1}(\sigma)\geq k+1$, namely $K_{s+1}(\sigma)\leq |\sigma|-k-1$}
\end{equation}
at stage $s+1$ where $\ds_{s}(\sigma)\leq k$. Since we do not have complete control of the 
universal \pf machine $U$ underlying $K$, we achieve \eqref{DdEzESnqea} indirectly via a \pf machine $M$
 that we build and is simulated by $U$ via some constant $c$. Under a standard codification of all \pf machines, we get
$K\leq K_M+c$
and by the recursion theorem, without loss of generality, we may use $c$ in the definition of $M$.
To achieve \eqref{DdEzESnqea}  it suffices to 
\begin{itemize}
\item set $K_{M_{s+1}}(\sigma)= |\sigma|-k-1-c$
\item speed-up the enumeration of $U,K$ so that $K_{s+1}\leq K_{M_{s+1}}+c$.
\end{itemize}
So $K_{s+1}$ refers to the stages in our construction and, assuming $K\leq K_M+c$:
\[
K_{M_{s+1}}(\sigma)= |\sigma|-k-1-c\impl 
K_{s+1}(\sigma)\leq |\sigma|-k-1\impl \eqref{DdEzESnqea}.
\]
In this way we give a simple description of the construction where action \eqref{DdEzESnqea} at $s+1$:
\begin{itemize}
\item indicates the $M$-compression $K_{M_{s+1}}(\sigma)= |\sigma|-k-1-c$
\item requires an extra $M$-description of length $|\sigma|-k-1-c$
\item increases the weight of $M$ by $2^{k+1+c-|\sigma|}$ which is the {\em cost} of \eqref{DdEzESnqea}.
\end{itemize}
The existence of this  $M$  follows by the Kraft-Chaitin-Levin theorem as long as 
the total cost of actions \eqref{DdEzESnqea} is at most 1.
With these standard conventions, we state the construction. 

{\bf Construction.} At stage $s+1$ 
\begin{itemize}
\item set $\ds_{s+1}(\sigma)\geq k+1$ for the least $\sigma\in 2^{\leq s}, k\leq s$ with 
$\mu(P^k_{\sigma}(s))> 2^{K_s(k)+k+1+c-|\sigma|}$ 
\item label action $\ds_{s+1}(\sigma)\geq k+1$ as $(s,\sigma)_k$.
\end{itemize}
If such $\sigma,k$ do not exist, go to the next stage.

{\bf Verification.} 
At each stage $s+1$ where action $(s,\sigma)_k$ occurs:
\begin{enumerate}[(i)]
\item $\ds_{s}(\sigma)\leq k$, because otherwise $P^k_{\sigma}(s)$ would be empty
\item  $P^k_{\sigma}(s)$ is  permanently removed from $P^k$: 
$(s,\sigma)_k\neq (t,\tau)_k\impl P^k_{\sigma}(s)\cap P^k_{\tau}(t)=\emptyset$
\item the compression taking place costs $2^{k+1+c-|\sigma|}<2^{-K(k)}\cdot \mu(P^k_{\sigma}(s))$.
\end{enumerate}
Let $D_k$ be the set of pairs $(s,\sigma)$ over all actions  $(s,\sigma)_k$.

By (ii) and (iii) the compression cost due to actions in $D_k$ is at most
\[
\sum_{(s,\sigma)\in D_k} 2^{-K(k)}\cdot  \mu(P^k_{\sigma}(s)) 
\leq 2^{-K(k)}\cdot \sum_{(s,\sigma)\in D_k} \mu(P^k_{\sigma})\leq 2^{-K(k)}.
\] 
So the total compression cost is at most
$\sum_k 2^{-K(k)}<1$ and the construction is justified: 
\begin{itemize}
\item the weight of machine $M$ that we implicitly construct is $\leq 1$
\item by the Kraft-Chaitin-Levin theorem $M$ is well-defined 
\end{itemize}
and by the first clause in the construction, $\forall k,\sigma :\ \mu(P^k_{\sigma})\leq 2^{k+K(k)+1+c-|\sigma|}$. 
\end{proof}
Let $\leqp$ denote inequality up to an additive constant. Theorem \ref{24beca61} now follows from \eqref{oDyfIYIan} and: 
\begin{thm}
If $g$ is a computable order with $\lim_n n/g(n)=0$ then no difference-random computes
a $g$-fat set of incompressible strings.
\end{thm}\begin{proof}
Let $g$ be as in the statement and let $z,\Phi, c$ be such that
\begin{itemize}
\item $\Phi$ be a Turing functional with $\Phi(z;i)\subseteq 2^i$
\item $\Phi(z):=\bigcup_i \Phi(z;i)$ is a $g$-fat set with $\ds(\Phi(z))<c$.
\end{itemize}
 Let $P\ :=\ \sqbrad{x}{\forall i,\ \parb{\ds(\Phi(x;i))\leq c\ \vee\  \Phi(x;i)\un}}$ and
 \[
 P_{\sigma}\ :=\ \sqbrad{x\in P}{\sigma\in \Phi(x;|\sigma|)}
 \hspace{0.3cm}\textrm{and}\hspace{0.3cm}
Q_n^i:=\sqbrad{z}{\abs{\Phi(z;i)}\geq g(n)}
 \]
so by Lemma \ref{DP8sKcDJCf} we have   $\mu(P_{\sigma})\leqt 2^{-|\sigma|}$ and $\forall i,\ \sum_{\sigma\in 2^i} \mu(P_{\sigma})\leqt 1$.

Since each $x\in P\cap Q_n^i$ belongs to $\geq g(n)$ of the $P_{\sigma}, \sigma\in 2^i$, 
\[
\sum_{\sigma\in 2^i} \mu(P_{\sigma}\cap Q_n^i)\geq g(n)\cdot \mu(P\cap Q_n^i)
\]
and $\mu(P\cap Q_n^i)\leqt 1/g(n)$. Let $Q_n:=\cup_{i\leq n} Q_n^i$ so 
\[
\mu(P\cap Q_n)\leq \sum_{i\leq n} \mu(P\cap Q_n^i)\leqt n/g(n).
\]
By the hypothesis there exists a computable $f$ such that $\mu(P\cap Q_{f(n)})\leq 2^{-n}$ so
$(P\cap Q_{f(n)})$ is a difference test. Also 
$z\in P\cap Q_n$ for each $n$ so $z$ is not difference random.
\end{proof}

\section{Incompressible and weakly-incompressible trees}

In this section we investigate whether incompressible and weakly-incompressible trees can be computed by incomplete randoms
(see Theorems \ref{7i3g6ty9SJ} and \ref{YRH2XAwqKL} from \S\ref{yP4cTWTJZG}). Here we assume that the output of each Turing functional $\Phi$ is a pruned tree; in particular the output of $\Phi(x;n)$, if defined, is a pruned tree whose leaves have length $n$. 

The proof of Theorem \ref{7i3g6ty9SJ} relies on the following consequence of Lemma \ref{DP8sKcDJCf}.

\begin{lem}\label{6xHgSw7WOv}
Let $k_0$ be a constant, $\Phi$ be a Turing functional and
\begin{align*}
Q &:= \sqbrad{x}{\ds(\Phi(x;n))\leq k_0\text{ for all }n}, \\
Q_n^m &:= \sqbrad{x\in Q}{\log |\Phi(x;n)|\geq m}.
\end{align*}
Then $\mu(Q_n^m)\leqt 2^{-m}$ where the implicit constant depends on $k_0$.
\end{lem}\begin{proof}
Let $Q_\sigma:=\sqbrad{x\in Q}{\sigma\in \Phi(x;|\sigma|)}$. By Lemma \ref{DP8sKcDJCf} there exists a constant $c$ with $\mu(Q_\sigma)< 2^{c-|\sigma|}$.
So for each $n$:
\[
\sum_{\sigma\in 2^n}  \mu(Q_\sigma)< 2^{c}.
\]
Fix $m$. Each $x\in Q_n^m$ is in $\geq 2^{m}$ of the $Q_\sigma$ in the sum. So $\mu(Q_n^m)<2^{c-m}$.
\end{proof}
Theorem \ref{7i3g6ty9SJ} follows from \eqref{oDyfIYIan} and: 

\begin{thm}\label{NtI4AEpWrs}
If $z$ is random and computes a proper pruned incompressible tree, then $z$ is not difference-random.
\end{thm}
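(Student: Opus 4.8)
The plan is to exhibit a difference Solovay test that $z$ fails to strongly avoid, and then conclude by Lemma~\ref{xtR47lHczE}. First I would fix a proper pruned incompressible tree $Y\leqT z$, a constant $k_0$ with $\ds(Y)\leq k_0$, and (after the routine normalisation that lets us assume a Turing functional always outputs a pruned tree) a Turing functional $\Phi$ with $\Phi(z)=Y$. Write $w_n(x):=\abs{\Phi(x;n)}$; since $\Phi(x)$ is a pruned tree, every node has a successor, so $w_n(x)$ is non-decreasing in $n$ for every $x$. As $Y$ is pruned and proper, $w_n(z)$ is non-decreasing and unbounded, so $w_n(z)\to\infty$.

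Next I would set $P:=\sqbrad{x}{\ds(\Phi(x))\leq k_0}$, a $\Pi^0_1$ class whose complement is enumerated by searching for some $\sigma\in\Phi(x)$ with $K(\sigma)<\abs{\sigma}-k_0$, and $U_m:=\sqbrad{x}{\exists n\ w_n(x)\geq 2^m}$, which is $\Sigma^0_1$ uniformly in $m$. Then $z\in P$ since $\ds(\Phi(z))=\ds(Y)\leq k_0$, and $z\in U_m$ for all $m$ since $w_n(z)\to\infty$; so $z\in P\cap U_m$ for every $m$. It thus remains to verify that $(P\cap U_m)_m$ is a difference Solovay test, i.e. that $\sum_m\mu(P\cap U_m)<\infty$, and for this it suffices to prove $\mu(P\cap U_m)\leqt 2^{-m}$.

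This last estimate is the crux, and the step I expect to require the most care. Lemma~\ref{6xHgSw7WOv} bounds $\mu(Q_n^m)$ for each \emph{fixed} level $n$, but membership in $U_m$ only asserts that the width exceeds $2^m$ at \emph{some} level, and for $z$ that level need not be bounded by any computable function --- so summing the bound of Lemma~\ref{6xHgSw7WOv} over all $n$ diverges and gives nothing. The point is to use that the width of a pruned tree is non-decreasing: if $x\in P\cap U_m$, then $w_n(x)\geq 2^m$ for all $n$ past some $N$, while $\ds(\Phi(x;n))\leq\ds(\Phi(x))\leq k_0$ for every $n$; hence $x\in Q_n^m$ for all $n\geq N$, and so $x\in\bigcup_N\bigcap_{n\geq N}Q_n^m$. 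Since $\bigcap_{n\geq N}Q_n^m\subseteq Q_N^m$, Lemma~\ref{6xHgSw7WOv} gives $\mu(\bigcap_{n\geq N}Q_n^m)\leqt 2^{-m}$ with implicit constant depending only on $k_0$; and as these sets increase with $N$ we obtain $\mu(P\cap U_m)\leq\sup_N\mu(\bigcap_{n\geq N}Q_n^m)\leqt 2^{-m}$, as wanted.

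Consequently $\sum_m\mu(P\cap U_m)<\infty$, so $(P\cap U_m)_m$ is a difference Solovay test; since $z$ lies in every $P\cap U_m$ it does not strongly avoid it, and therefore $z$ is not difference-random by Lemma~\ref{xtR47lHczE}. The only genuinely fiddly point besides the key estimate is the normalisation in the first paragraph: turning an arbitrary proper pruned incompressible tree $Y\leqT z$ into one computed by a functional that outputs pruned trees on \emph{all} oracles requires padding branches with some care --- padding deadends with a fixed tail would destroy the incompressibility of $\Phi(z)$ --- but this is a standard adjustment that does not affect the argument above.
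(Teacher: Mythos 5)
The reduction to a difference Solovay test and the use of Lemma~\ref{xtR47lHczE} are fine, but the key estimate $\mu(P\cap U_m)\leqt 2^{-m}$ is not established, and the gap is exactly the point where you wave at a ``standard normalisation''. Your argument needs the inclusion $P\cap U_m\subseteq\bigcup_N\bigcap_{n\geq N}Q_n^m$, i.e.\ that every $x\in P\cap U_m$ lies in $Q_n^m$ for \emph{all} sufficiently large $n$. That requires $\Phi(x;n)$ to converge at (cofinitely many) levels $n$ for every such oracle $x$ — in other words that $\Phi$ is total. One can normalise $\Phi$ so that its outputs, \emph{when defined}, are coherent levels of a pruned tree (so widths are non-decreasing along defined levels, and this is all the paper's convention gives), but one cannot make a Turing functional total on all oracles while preserving $\Phi(z)=T$: that would turn an arbitrary Turing reduction into a truth-table reduction, which is impossible in general and is precisely the distinction that matters throughout this topic (it is why the computably dominated case in Corollary~\ref{sjbcegRm4f} needs a separate argument). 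For an oracle $x$ on which $\Phi$ reaches width $2^m$ at some level and then diverges forever after, $x$ is in $P\cap U_m$ but in no tail intersection $\bigcap_{n\geq N}Q_n^m$; the measure of this ``partial'' part is not controlled by Lemma~\ref{6xHgSw7WOv}, since the sets $Q_N^m$ over the varying divergence levels $N$ are not nested and a union bound diverges. So the crux estimate is unproved (and it is not established anywhere in the paper either — the paper only proves the level-by-level bound), which is why the misjudged ``fiddly point'' is not the padding of deadends but divergence off the true oracle.

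The paper circumvents exactly this obstacle by a different device: instead of the monotone union over levels, it builds the test $(Q_s^{m_s})$ where the level is the stage $s$ itself and the width threshold is $2^{m_s}$ for a computable repetition-free enumeration $(m_s)$ of $\zj$, so that $\sum_s\mu(Q_s^{m_s})\leqt\sum_s 2^{-m_s}<\infty$ by the fixed-level Lemma~\ref{6xHgSw7WOv} alone. It then argues by the dichotomy on the $z$-computable function $g(m)=\min\sqbrad{n}{\abs{T\cap 2^n}\geq 2^m}$: either $g(m_s)>s$ for almost all $s$, in which case $z\geqT g\geqT\zj$ and hence (being random) $z$ is not difference-random by \eqref{oDyfIYIan}, or $g(m_s)\leq s$ infinitely often, in which case $z\in Q_s^{m_s}$ infinitely often and Lemma~\ref{xtR47lHczE} applies. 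Your proposal is missing this idea (or some substitute for it), and without it the argument only proves the theorem for trees computed via total functionals, i.e.\ for $tt$-reductions.
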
\begin{proof}
Assume that $z$ satisfies the hypothesis, so there exists a proper tree $T$, constant $k_0$ with $\ds(T)\leq k_0$, and
a tree-functional $\Phi$ such that 
$\Phi(z)=T$. Let 
\[
g(m):=\min\sqbrad{n}{\abs{T\cap 2^n}\geq 2^m}
\]
so $g\leq_T z$. Note that
$E_n^m:=\sqbrad{x}{|\Phi(x;n)|\geq 2^m}$ is $\Sigma^0_1$, uniformly in $m,n$ and 
\[
P:=\sqbrad{x}{\forall n,\ \ds(\Phi(x;n))\leq k_0}\in\Pi^0_1.
\]
%
%
Since $\abs{T\cap 2^{g(m)}}\geq 2^m$, $\ds(T)\leq k_0$ and $T$ is pruned:
$n\geq g(m)\impl z\in E_n^m\cap P$.

Let $Q_n^m:=E_n^m\cap P$ so by Lemma \ref{6xHgSw7WOv} we have $\mu(Q_n^m)\leq 2^{c-m}$, where $c$ is a constant.

Let $(m_s)$ be a computable enumeration of $\emptyset'$ without repetitions, so 
$(Q_s^{m_s})$ is a Solovay difference-test.
If $g(m_s)> s$ for almost all $s$ then $z\geq_T g\geq_T\zj$.
Otherwise for infinitely many $s$ we have 
 $g(m_s)\leq s$ and so $z\in Q_s^{m_s}$.
By Lemma \ref{xtR47lHczE},  $z$ is not difference random.
\end{proof}

The combination of Theorem \ref{NtI4AEpWrs}  with \eqref{oDyfIYIan}
shows that if a random $z$ computes an incompressible proper tree then $z\geq_T\zj$.
Reduction $z\geq_T\zj$ in this indirect proof is non-uniform:
the first case depends on $g(m):=\min\sqbrad{n}{\abs{T\cap 2^n}>2^m}$
and the second is implicit in the difference-test we construct.  
We give a proof where the two reductions for $z\geq_T\zj$ are explicit.

\begin{coro}\label{w9Y9wimsv}
If $z$ is random and computes a proper pruned incompressible tree, $z\geqT\zj$.
\end{coro}\begin{proof}
Let $\Phi$ be a Turing functional, 
$z$ a random real, and  $\Phi(z)$ a proper tree with $\forall i,\ \ds(\Phi(z;i))\leq k_0$ for some $k_0$.
Let $(n_s)$ be a computable enumeration of $\emptyset'$ without repetitions 
and
\begin{align*}
P_i^m(s)\  :=\ &\sqbrad{x}{\forall i\leq s,\ \ds_m(\Phi_s(x;i))\leq k_0}\\[0.1cm]
G^m_i(s)\  :=\  & \sqbrad{x\in P^m_i(s)}{|\Phi_s(x;i)|\geq 2^{n_i}}.
\end{align*}
By Lemma \ref{6xHgSw7WOv}  we have $\mu(\bigcap_m G^m_i(s)) \leqt 2^{-n_i}$  
so there is an increasing computable $f$ such that 
\[
\mu(G\asto_i(s)) \leqt 2^{-n_i}
\hspace{0.3cm}\textrm{where $G\asto_i(s):=G^{f(i,s)}_i(s)$.}
\]
To convert the difference test  
in Theorem \ref{NtI4AEpWrs} into a  \ml test $(V_s)$, 
at stage $s$ we make enumerations with respect to each $n_i\geq n_s, i\leq s$ instead of just $n_s$:
\[
V_s\  :=\   \bigcup\sqbrad{G\asto_i(s)}{i\leq s\wedga n_i\geq n_s}.
\]
The family $(V_s)$ is a Solovay test since the $V_s$ are clopen,   \sz uniformly in $s$, and
\[
\mu(V_s)\leq \sum_i \mu(G_i\asto(s))\leq \sum_{j\geq n_s} 2^{-j}=2^{1-n_s}.
\]
The functions
$h(i):=\min\sqbrad{s}{\Phi_s(z; i)\de}$ and  $g(n):=\min\sqbrad{i}{\abs{\Phi(z;i)}>2^n}$
are computable in $z$. Since $z$ is random, $z\not\in V_s$
for sufficiently large $s$   and 
$z\not\in G\asto_i(s)$ when $i\leq s$, $n_i\geq n_s$.
So
\begin{equation}\label{wvR8mrf3ib}
\parb{g(n_{s})\leq s<s'\wedga n_{s'}<n_{s}\wedga s'>h(s)}\impl z\in V_s.
\end{equation}
Let $P := \sqbrad{x}{\forall i,\ \ds(\Phi(x;i))\leq k_0}$ and
$G_i\  :=\   \sqbrad{x\in P}{|\Phi(x;i)|\geq 2^{n_i}}$.
There are two reasons why $z\not\in G\asto_i(s)$ occurs: (i) when
$g(n_i)>i$:  the size $\abs{\Phi(z;i)}$  is not sufficiently large for $z\in G_i$; and (ii) when
$\Phi_s(z;i)\un$, so then $s$ is smaller than the stage $h(i)$ where $z$ enters $G_i$.
If $g(n_s)\geq s$ for all but finitely many $s$, $z\geq_T g\geqT \zj$. 
Otherwise
there is a $z$-computable increasing $(s_i)$ with $g(n_{s_i})< s_i$.
Since $z\not\in  V_s$ for large $s$, by \eqref{wvR8mrf3ib}: 
$g(n_s)<s<h(s)\leq t\impl n_t>n_s$.
So $h(s_i)$ bounds the settling time of $\zj\cap [0, n_{s_i}]$. Since $h, (s_i)$ are $z$-computable, 
$z\geqT \zj$.
\end{proof}
\begin{rem}
Recall the growth of tree $T$ in the  proof of Theorem \ref{NtI4AEpWrs}  in terms of 
\[
g(m):=\min\sqbrad{n}{\abs{T\cap 2^n}>2^m}
\]
and the dichotomy with respect to whether $g$ bounds the settling time of $\zj$.
The reader may wonder if 
this argument can be reduced to the work of \citet{bslBienvenuP16}, in the sense that
the case where $g$ does not bound the settling time of $\zj$ corresponds to a tree which is a member of a deep \pz class.
In \S\ref{TNQpxFKaL8} we show that this is not so. Indeed, the growth of a tree is not related in any significant way with
the possible membership of it in a deep \pz class.
\end{rem}

Recall that the {\em deficiency} of $\sigma\in\twomel$ and  $E\subseteq\twomel$ is given by 
\[
\ds(\sigma):=|\sigma|-K(\sigma)
\hspace{0.3cm}\textrm{and}\hspace{0.3cm}
\ds(E):=\sup_{\sigma\in E} \ds(\sigma),
\]
and a tree $T$ is \textit{weakly-incompressible} if there is a constant $c$ 
and infinitely many $n$ such that $T\cap 2^n$ has deficiency at most $c$. We prove Theorem \ref{YRH2XAwqKL}.

\begin{thm}\label{JSYt4E3hxL}
Each random $z$ computes a weakly-incompressible perfect tree.
\end{thm}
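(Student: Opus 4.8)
The plan is to realize $T$ as a ``$z$-shattering tree'' along a \emph{fixed computable} set of positions that is sparse enough that the induced branching grows slowly, and then let the ample excess lemma supply infinitely many levels where the complexity loss is absorbed. Concretely, fix a computable order $\phi$ with $\sum_n 2^{-\phi(n)}=\infty$; for instance $\phi(n)=\floor*{\log(n+1)}$ works, since it jumps by at most $1$ and the $2^k$ integers $n$ with $\phi(n)=k$ together contribute $1$ to the sum. Let $A=\sqbrad{n}{\phi(n+1)=\phi(n)+1}\subseteq\Nat$; then $A$ is computable and infinite, with $\abs{A\cap[0,n)}=\phi(n)-\phi(0)=\phi(n)$ for every $n$. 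Let $T\leqT z$ be the pruned tree with
\[
[T]=\sqbrad{y}{\forall i\notin A,\ y(i)=z(i)},
\]
so $\abs{T\cap 2^n}=2^{\phi(n)}$.

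First I would dispatch the structural claims. Membership $\sigma\in T$ is decided by comparing $\sigma$ with $z\restr_{|\sigma|}$ off $A$, and $A$ is computable, so $T\leqT z$. The width $2^{\phi(n)}$ is unbounded, so $T$ is proper, and $T$ is pruned by construction. For perfectness: given $\sigma\in T\cap 2^n$, write $\sigma=y\restr_n$ with $y\in[T]$ and pick $i\in A$ with $i\geq n$; the two length-$(i+1)$ strings obtained from $y\restr_{i+1}$ by setting position $i$ to $0$ resp.\ $1$ both lie in $T$ (flipping a bit at a position of $A$ stays inside $[T]$), are incomparable, and strictly extend $\sigma$.

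Next comes the complexity estimate. For $\sigma\in T\cap 2^n$, the string $z\restr_n$ is obtained from $\sigma$ by overwriting the $\phi(n)$ positions in $A\cap[0,n)$ --- which are computable from $n=|\sigma|$ --- with the corresponding $\phi(n)$ bits of $z$. Hence a fixed prefix machine that reads a self-delimiting program for some string $\rho$ and then $\phi(|\rho|)$ further bits can output $z\restr_n$, giving
\[
K(z\restr_n)\leq K(\sigma)+\phi(n)+O(1),
\qquad\text{so}\qquad
\ds(T\cap 2^n)\leq\ds(z\restr_n)+\phi(n)+O(1).
\]
Finally, the randomness input: I claim $\liminf_n\parb{\phi(n)+\ds(z\restr_n)}<\infty$. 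If not, $\phi(n)+\ds(z\restr_n)\to\infty$, so for each $M$ there is $N$ with $2^{\ds(z\restr_n)}>2^{M}\cdot 2^{-\phi(n)}$ for all $n\geq N$; summing over $n\geq N$ and using $\sum_{n\geq N}2^{-\phi(n)}=\infty$ yields $\sum_n 2^{\ds(z\restr_n)}=\infty$, contradicting the ample excess lemma \cite{milleryutran2} applied to the random real $z$. Combined with the displayed bound, $\ds(T\cap 2^n)$ is bounded along infinitely many $n$, so $T$ is weakly-incompressible.

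The one genuinely delicate point is calibrating the branching schedule $\phi$. Perfectness forces $\abs{T\cap 2^n}\to\infty$, and by Theorem~\ref{7i3g6ty9SJ} one cannot push $\ds(T\cap 2^n)$ to be bounded at \emph{every} level without forcing $z\geqT\zj$, so one must allow deficiency to blow up off a thin set of levels; conversely each branch costs one bit in the complexity comparison, so $\phi$ must grow to infinity yet slowly. The observation that a merely logarithmic schedule still has $\sum_n 2^{-\phi(n)}=\infty$ --- which is exactly the hypothesis that converts ample excess into infinitely many ``recovery'' levels --- is what makes both constraints compatible. Everything else (the prefix-free bookkeeping in the complexity bound, the tail argument) is routine.
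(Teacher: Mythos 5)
Your proposal is correct, but it proves the theorem by a genuinely different route than the paper. The paper's proof is probabilistic: it builds a total Turing functional that randomly selects levels $\ell_n\in[0,2^{n^2})$ and, at each new level, two random extensions of every string kept so far, and then shows via a Solovay test (using the counting theorem conditioned on $\tau^*$ and an induction on conditional complexity) that for every random oracle the resulting tree is perfect and has deficiency bounded at all sufficiently large chosen levels. You instead shatter $z$ itself along a fixed computable sparse set $A$ of positions with counting function $\phi(n)=\abs{A\cap[0,n)}\approx\log n$; the deficiency bound $\ds(T\cap 2^n)\leqp \ds(z\restr_n)+\phi(n)$ is exactly the estimate the paper uses in its Proposition \ref{LhaG7r8Qyk} (for a different theorem), and your new ingredient is the calibration $\sum_n 2^{-\phi(n)}=\infty$, which together with the ample excess bound $\sum_n 2^{\ds(z\restr_n)}<\infty$ forces $\ds(z\restr_n)+\phi(n)\leq 0$ at infinitely many $n$, hence $\liminf_n\ds(T\cap 2^n)<\infty$. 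All the individual steps check out: $T\leqT z$ with use $n$, pruned, perfect since $A$ is infinite, the prefix-free bookkeeping is sound because the number of auxiliary bits $\phi(|\rho|)$ is computable from the first output, and the tail/divergence argument is valid. The trade-offs: your construction is shorter, deterministic given $z$, computes $T$ with identity use, and even yields a fatter tree (width about $n$ at level $n$, versus roughly $2^{\sqrt{\log n}}$ for the paper's tree); the paper's probabilistic construction, on the other hand, gives bounded deficiency along \emph{all} sufficiently large designated levels rather than just infinitely many unlocated ones, is uniform in the sense of a single total functional succeeding on a measure-one set of oracles, and sits naturally within the paper's theme of probabilistic generation and negligibility.
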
\begin{proof}

We give a probabilistic computable construction of nondecreasing $(\ell_n)$ and $F_n\subseteq 2^{\ell_n}$
and show that  with probability 1, $F:=\bigcup_n F_n$ is incompressible and 
 for sufficiently large $n$ each string in $F_n$ has two extensions in $F_{n+1}$. Let
 \[
 I_n=[0,2^{n^2})
 \hspace{0.3cm}\textrm{and}\hspace{0.3cm}
 \ell_0=0
 \hspace{0.3cm}\textrm{and}\hspace{0.3cm}
 F_0:=\{\lambda\}.
 \]
 For $n>0$ we randomly choose  $\ell_n \in I_n$. If $\ell_n\leq \ell_{n-1}$ we instead set $\ell_n=\ell_{n-1}$ and let $F_n=F_{n-1}$.
Otherwise for each $\tau\in F_{n-1}$ we put into $F_n$ two randomly chosen distinct extensions of $\tau$ of length $\ell_n$.

As in Theorem \ref{9NoMkmNVvP}, the construction is a total Turing functional $\Phi$ 
where each oracle $z$ outputs an array $\Phi(z;n)=F_n(z)$, $F(z):=\cup_n F_n(z)$  and corresponds to a unique choice
$(\ell_n(z)), (F_n(z))$ of $(\ell_n), (F_n)$. It suffices to show that for each random $z$ and sufficiently large $n$,
each string in $F_n(z)$ has two extensions in $F_{n+1}(z)$ and $F(z)$ is incompressible. Then
the downward $\preceq$-closure of $F(z)$ would be a weakly incompressible tree.

Consider the following conditions where $b$ is a constant to be fixed later:
\begin{enumerate}[\hthree(a)]
\item $\ell_n\leq \ell_{n-1}\ \vee\ K(\ell_n-\ell_{n-1})\leq n^2-n/3$
\item $\exists \sigma\in 2^{\ell_n}\cap F(z), \tau\in 2^{\ell_{n-1}}\ \parb{\sigma\succeq\tau\wedga 
K(\sigma\ |\ \tau)\leq \ell_n-\ell_{n-1}+n^2-5n/3+b}$.
\end{enumerate}

Let $V_n$ be the set of reals $z$ with respect to which one of (a), (b) holds, and note that it is a
$\Sigma^0_1$ class uniformly in $n$. We obtain a bound on $\mu(V_n)$.

Since $\ell_n\in [0,2^{n^2})$, $\ell_{n-1}<2^{(n-1)^2}$ the probability of $\ell_n\leq \ell_{n-1}$  is $\leq 2^{(n-1)^2}/2^{n^2}=2^{-2n+1}$.

Assuming $\ell_n> \ell_{n-1}$, since $|\{\ell:K(\ell)\leq t\}|\leq 2^t$ we have 
\[
\abs{\sqbrad{\ell_n}{K(\ell_n-\ell_{n-1})\leq n^2-n/3}} \leq 2^{n^2-n/3} = 2^{-n/3}\cdot \abs{I_n}
\] 
so the probability of (a) is $\leq 2^{-n/3}+2^{-2n+1}$.

Assuming  $\neg$(a), by the counting theorem relativized to $\tau$ for each $\tau\in f(z)\cap 2^{\ell_{n-1}}$:
\begin{align*}
& \abs{\sqbrad{\sigma\in 2^{\ell_n}}{\sigma\succeq\tau \wedga K(\sigma\ |\ \tau)\leq \ell_n-\ell_{n-1}+n^2-5n/3+b}}   \\
=\ &
\abs{\sqbrad{\rho\in 2^{\ell_n-\ell_{n-1}}}{K(\rho\ |\ \tau)\leq \ell_n-\ell_{n-1}+n^2-5n/3+b}}\\
\leqt\ & 2^{\ell_n-\ell_{n-1}-K(\ell_n-\ell_{n-1})+n^2-5n/3+b} \\ 
\leq\ & 2^{\ell_n-\ell_{n-1}}\cdot 2^{-4n/3+b}.
\end{align*}
So the probability of (b) for $\sigma$ is $\leqt 2^{-4n/3+b}$.
By independence, the probability of (b)  is $\leqt 2^n\cdot 2^{-4n/3+b}=2^{-n/3+b}$.  
In total, the probability that  (a) or (b) occurs is 
\[
\mu(V_n)\leqt 2^{-2n+1}+2^{-n/3}+2^{-n/3+b}.
\]
So $\sum_n\mu(V_n)<\infty$ and $(V_n)$ is a Solovay test.

Finally, let $z$ be a random real.
Fix $m$ such that for all $n>m$ none of (a), (b) occurs.
By $\neg$(a) for each $n>m$, each $\sigma\in F_n(z)$ has two extensions in $F_{n+1}(z)$. 
It remains to show that for a sufficiently large $b$ used in the definition of $(F_n)$,
for each  $n>m$ the deficiency of $F(z)\cap 2^{\ell_n}$ is at most $c:=\ds(F(z)\cap 2^{\ell_m})$.

Assuming that this holds for $n-1$, since (b)  does not occur at $n$, each $\sigma\in F_n(z)$ extending $\tau\in F_{n-1}(z)$ satisfies 
$K(\sigma|\tau)>\ell_n-\ell_{n-1}+n^2-5n/3+b$. By the induction hypothesis $K(\tau)\geq |\tau|-c$, combining with $K(\tau)\leqp |\tau|+2\log|\tau|$ we have $K(K(\tau)-|\tau|)\leqp 2\log\log|\tau|$. From $\tau$, $K(\tau)-|\tau|$ we can recover $\tau^*$, so \[
K(\sigma|\tau^*)\geqp K(\sigma|\tau,K(\tau)-|\tau|)\geqp K(\sigma|\tau)-K(K(\tau)-|\tau|)\geqp K(\sigma|\tau)-2\log\log|\tau|.
\]
From $\sigma$, $|\tau|=\ell_{n-1}$ we can recover $\tau$, so $K(\sigma)+K(\ell_{n-1})\geqp K(\sigma,\tau)$ where $K(\ell_{n-1})\leqp\log|\ell_{n-1}|+2\log\log|\ell_{n-1}|\leq(n-1)^2+4\log n$. Then
\begin{align*}
K(\sigma) &\geqp K(\sigma,\tau)-K(\ell_{n-1}) \\
&\eqp K(\sigma|\tau^*)+K(\tau)-K(\ell_{n-1}) \\
&\geqp K(\sigma|\tau)-2\log\log|\tau|+K(\tau)-K(\ell_{n-1}) \\
&\geqp (\ell_n-\ell_{n-1}+n^2-5n/3+b)-4\log n+(|\tau|-c)-((n-1)^2+4\log n) \\
&= |\sigma|-c+b+(n/3-8\log n) \\
&\geqp |\sigma|-c+b.
\end{align*}
So for a sufficiently large $b$ we have $K(\sigma)\geq|\sigma|-c$. This completes the induction step and the proof that  
$F(z)$ is incompressible.
\end{proof}

As expected, the tree constructed in Theorem \ref{JSYt4E3hxL} is less fat than the set of Theorem \ref{9NoMkmNVvP}; 
in particular it is $2^{\sqrt{\log n}}$-fat, as the parameters in the proof indicate.

\section{Nearly incompressible trees}\label{d6GV7C9Wxm}

By Theorem \ref{7i3g6ty9SJ}, the deficiency of a tree $T$ has to increase as its number of paths (or its width) increase.
In this section we explore the relationship between the growth of the width and the deficiency in such trees, and prove Theorems \ref{O65ckpRLDvabb1}, \ref{O65ckpRLDvabb} and \ref{O65ckpRLDvabb3}.

We start with showing a stronger version of Theorem \ref{O65ckpRLDvabb1}.
\begin{prop}\label{LhaG7r8Qyk}
Let $g$ be a computable order with $\forall n,\ g(n+1)\leq g(n)+1$.
Every random real computes a perfect tree $T$ with $\log |T\cap 2^{n}|=g(n)$ and 
\[
\sum_n \frac{2^{\ds(T\cap 2^n)}}{|T\cap 2^n|}<\infty.
\]
In particular, $\lim_n\parb{\log |T\cap 2^n|- \ds(T\cap 2^n)}=\infty$.
\end{prop}\begin{proof}
Given $g,x$ let $f(n):=\min\sqbrad{i}{g(i)\geq n}$ and 
$T$ be the pruned tree whose paths $z$ are the reals that agree with $x$
outside the positions in $f(\Nat)$, namely $\forall n\in \Nat-f(\Nat),\ z(n)=x(n)$. Then 
$g(n)=\abs{\sqbrad{i}{f(i)\leq n}}=\log |T\cap 2^{n}|$.

For each $k\leq f(n)$ and $\sigma\in T\cap 2^{k}$ we can compute $x\restr_k$ given $\sigma$ and $x(f(0)),\cdots,x(f(n-1))$. Therefore 
$K(x\restr_k)\leqp K(\sigma)+n=K(\sigma)+\log \abs{T\cap 2^k}$.
So 
\begin{equation}\label{47Vl8xtza5a}
\ds(\sigma)\leqp \ds(x\restr_k) +\log \abs{T\cap 2^k}
\hspace{0.3cm}\textrm{and}\hspace{0.3cm}
\ds(T\cap 2^k)\leqp \ds(x\restr_k) +\log \abs{T\cap 2^k}.
\end{equation}
By the ample excess lemma \cite{milleryutran2} and the randomness of  $x$ we get $\sum_n 2^{\ds(x\restr_n)}<\infty$ which, in combination with 
\eqref{47Vl8xtza5a}, concludes the proof.
\end{proof}

Therefore the deficiency of $T\cap 2^{\leq n}$ can grow slower than $\log\abs{T\cap 2^{n}}$. To quantify the limits of this gap we need a strengthening of Lemma \ref{DP8sKcDJCf}.

\begin{lem}\label{jOgzCAVZg3}
Given Turing functionals $\Phi, \Theta$ with $\Phi(x;n)\subseteq 2^n$, $\Theta(x;n)\in \Nat$ let 
\begin{align*}
G\ :=&\ \sqbrad{x}{\ds(\Phi(x;n))\leq \Theta(x;n)\text{ for all }n} \\[0.1cm]
Q_{\sigma}^p\ :=&\ \sqbrad{x}{\Theta(x;|\sigma|)= p\wedga \sigma\in \Phi(x;|\sigma|)}.
\end{align*}
Then $\exists c\ \forall n,p\ \forall \sigma\in 2^{n}: \mu(Q_{\sigma}^p\cap G)\leq 2^{c+p-n}$.
\end{lem}\begin{proof}
The argument is based on the following fact:
\[
\parb{x\in Q_{\sigma}^p\wedga \ds(\sigma)>p}\impl x\not\in G.
\]
Note that $G$ is a \pz class and $Q_{\sigma}^p$ is \sz uniformly in $\sigma,p$.
The idea is to 
\begin{itemize}
\item search for $\sigma,p$ such that the approximation to
 $\mu(Q_{\sigma}^p)$ exceeds the purported bound
 \item compress $\sigma$, increasing its deficiency to $p+1$.
\end{itemize}
The latter action ejects the reals in  the current approximation of $Q_{\sigma}^p$ from $G$,
thus reducing $\mu(G)$ by at least the cost $2^{p-|\sigma|}$ of the compression.
 
Let $G(s), Q_{\sigma}^p(s)$ the approximations to $G, Q_{\sigma}^p$ at stage $s$, defined as in the statement but with subscript $s$ in $\Phi, \Psi, \ds$.
We define a \pf machine $M$. By the recursion theorem we may use a constant $c$ in the definition of $M$ such that $K\leq K_M+c$. 

{\bf Construction of $M$.} 
Let $t_{0}:=0$ and for each $i\geq 0$:
\begin{enumerate}[(i)]
\item search for  $n,p\leq n-c-2, \sigma\in 2^n$ and $s_i>t_{i}$ such that 
\[
\mu(Q_{\sigma}^{p}(s_i)\cap G(s_i))> 2^{c+2+p-n}
\]
and  produce an $M$-description of $\sigma$ of length $n-p-c-1$
\item wait for  $t_{i+1}>s_i$ with  
$K_{t_{i+1}}(\sigma)\leq K_{M,s_{i}}(\sigma)+c$ 
\item set $n_i=n, p_i=p, \sigma_i=\sigma$ and go to (i) for $i:=i+1$.
\end{enumerate}

{\bf Verification.}
We first show that the weight of $M$ is  bounded by 1.
At round $i$:
\begin{itemize}
\item at step (i) weight $2^{1+p_i-n_i+c}$ is adde d in $M$, 
increasing the $M$-deficiency of $\sigma_i$ to  $p_i+c+1$
\item if $t_{i+1}$ at step (iii) is reached, the deficiency of $\sigma_i$ increases to  $p_i+1$
\item so $\ds_{t_{i+1}}(\sigma_i)\geq p_i+1$
and $Q_{\sigma_i}^{p_i}(s_i)\cap G(t_{i+1})=\emptyset$.
\end{itemize}
Since $G(s)\subseteq G(s+1)$, each increase $2^{c+1+p_i-n_i}$
in the weight of $M$ is matched by a decrease 
\[
\geq \mu(Q_{\sigma_i}^{p_i}(s_i)\cap G(s_i))> 2^{2+p_i-n_i+c}
\]
of  $\mu(G)$ of size which is twice larger, with at most one exception if the search at step (iii) is indefinite. In the  latter case there will be no more weight added to $M$ and  since $|\sigma_i|>p_i+c+2$, this last contribution to the weight of $M$ is at most 
$2^{-1}$.
So the total weight of $M$ is at most
\[
2^{-1}+\mu(G)/2< 1.
\]
By the Kraft-Chaitin-Levin theorem $M$ is well-defined.
By the choice of $c$ the definition of $M$ will never get stuck at (iii), waiting for the required
stage $t_{i+1}$ indefinitely. 

By step (i),  $\mu(Q_{\sigma}^p\cap G)\leq 2^{c+p-n}$ if $p\leq n-c-2$. Otherwise
$2^{c+p-n}>1\geq \mu(Q_{\sigma}^p\cap G)$.
\end{proof}

We first quantify the limit of the gap between deficiency and width for trees computable by incomplete randoms, 
in terms of computable upper bounds on the deficiency.

\begin{thm}
Let $h$ be a computable order and  $T$ be a tree with 
\[
\ds(T\cap 2^n)\leq h(n)\leq \log \abs{T\cap 2^n}. 
\]
If $z$ is difference random and $T\leqT z$, there is no order $g\leqT z$ with 
 $\log \abs{T\cap 2^n}\geq g(n)+h(n)$.
\end{thm}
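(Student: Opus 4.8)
The plan is to follow the template of Theorem~\ref{NtI4AEpWrs}, replacing the constant deficiency bound there by the computable bound $h$ and using the variable‑bound estimate of Lemma~\ref{jOgzCAVZg3} in place of Lemma~\ref{6xHgSw7WOv}. Suppose, toward a contradiction, that $g\leqT z$ is an order with $\abs{T\cap 2^n}\geq 2^{g(n)+h(n)}$ for all $n$. Fix a tree‑functional $\Phi$ with $\Phi(z)=T$; as in Lemma~\ref{jOgzCAVZg3} we regard $\Phi(x;n)$ as a subset of $2^n$, so that $\abs{\Phi(z;n)}=\abs{T\cap 2^n}$ and $\ds(\Phi(z;n))=\ds(T\cap 2^n)$. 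Apply Lemma~\ref{jOgzCAVZg3} with the total computable functional $\Theta(x;n):=h(n)$: this yields the $\Pi^0_1$ class $G:=\sqbrad{x}{\forall n,\ \ds(\Phi(x;n))\leq h(n)}$ and a constant $c$ with $\mu(\sqbrad{x\in G}{\sigma\in\Phi(x;\abs{\sigma})})\leq 2^{c+h(\abs{\sigma})-\abs{\sigma}}$ for every $\sigma$. Since each $x$ in the $\Sigma^0_1$ class $E_n^m:=\sqbrad{x}{\abs{\Phi(x;n)}\geq 2^m}$ that also lies in $G$ belongs to at least $2^m$ of the $2^n$ sets $\sqbrad{x\in G}{\sigma\in\Phi(x;n)}$, $\sigma\in 2^n$, a counting argument as in Lemma~\ref{6xHgSw7WOv} gives
\[
\mu(G\cap E_n^m)\leq 2^{c+h(n)-m}\qquad\textrm{uniformly in $n,m$.}
\]

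Let $(m_s)$ be a computable enumeration of $\zj$ without repetitions and set $j_s:=m_s+h(s)$, a computable sequence. Then $(G\cap E_s^{j_s})_s$ is a uniform sequence of differences of the $\Pi^0_1$ class $G$ with $\Sigma^0_1$ classes, and the displayed bound gives $\mu(G\cap E_s^{j_s})\leq 2^{c+h(s)-j_s}=2^{c-m_s}$, so $\sum_s\mu(G\cap E_s^{j_s})\leq 2^{c+1}<\infty$; that is, $(G\cap E_s^{j_s})$ is a difference Solovay test. Moreover $z\in G$, since $\ds(\Phi(z;n))=\ds(T\cap 2^n)\leq h(n)$, and $\abs{\Phi(z;s)}=\abs{T\cap 2^s}\geq 2^{g(s)+h(s)}$ for every $s$; consequently
\[
g(s)\geq m_s\ \impl\ z\in G\cap E_s^{j_s}.
\]
Note that this uses the width hypothesis directly at level $s$, so no pruning assumption on $T$ is needed.

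It remains to run the settling‑time dichotomy on the $z$‑computable order $g$. If $g(s)\geq m_s$ for infinitely many $s$, then $z$ lies in infinitely many members of the difference Solovay test $(G\cap E_s^{j_s})$, so by Lemma~\ref{xtR47lHczE} the real $z$ is not difference random, contrary to hypothesis. Otherwise $g(s)<m_s$ for all $s$ beyond some $s_0$; let $g^{-1}(m):=\min\sqbrad{s}{g(s)\geq m}$, which is total because $g$ is unbounded and is computable from $g$, hence from $z$. If $m\in\zj$ is enumerated at a stage $s>s_0$, then $g(s)<m_s=m$, so $s<g^{-1}(m)$; hence $m\in\zj$ iff $m$ occurs among $m_0,\dots,m_{\max(s_0,g^{-1}(m))}$, so $\zj\leqT g\leqT z$. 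But a difference‑random real is random, so by \eqref{oDyfIYIan} we have $z\not\geqT\zj$, again a contradiction. Therefore no such order $g$ exists.

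The only point of genuine novelty, and the main obstacle, is to arrange that the measure bound for $G\cap E_n^m$ is free of $n$: the estimate coming out of Lemma~\ref{jOgzCAVZg3} carries a spurious factor $2^{h(n)}$, and it is absorbed only by inflating the width threshold in $E$ to $2^{m+h(n)}$ — equivalently, by indexing the test by $j_s=m_s+h(s)$ rather than by $m_s$. Once this bookkeeping is set up, the inclusion $g(s)\geq m_s\Rightarrow z\in G\cap E_s^{j_s}$ and the dichotomy on $g$ are essentially verbatim from Theorems~\ref{NtI4AEpWrs} and~\ref{w9Y9wimsv} and require no new idea.
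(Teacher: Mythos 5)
Your proposal is correct and follows essentially the same route as the paper's proof: apply Lemma~\ref{jOgzCAVZg3} with $\Theta:=h$, count to get $\mu(G\cap E_n^m)\leq 2^{c+h(n)-m}$, build a difference Solovay test along a computable enumeration of $\zj$, and conclude via the settling-time dichotomy on the $z$-computable order $g$ together with \eqref{oDyfIYIan}. The only difference is bookkeeping: the paper absorbs the $2^{h(n)}$ factor by demanding width $\geq 2^{p+h(n)}$ in its classes $Q_n^p$, whereas you shift the test index to $j_s=m_s+h(s)$, which is the same device.
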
\begin{proof}
Given $T, h$ be as above, suppose that $T,g\leqT z$ and 
\begin{equation}\label{sFI8yT2fLF}
\log \abs{T\cap 2^n}\geq g(n)+h(n).
\end{equation}
It suffices to show that $z$ is not difference random. Without loss of generality, we assume that $z\not\geqT\zj$.
Let $\Phi, \Psi$ be Turing functionals with $\Phi(z;n)=T\cap 2^n$, $\Psi(z;n)=g(n)$ and set 
\begin{align*}
G\ :=&\ \sqbrad{x}{\forall n,\  \parb{\Phi(x;n)\un\ \vee \ \ds(\Phi(x;n))\leq h(n)}} \\[0.1cm]
Q_{\sigma}\ :=&\ \sqbrad{x\in G}{\sigma\in \Phi(x;|\sigma|)}\\[0.1cm]
Q_{n}^p\ :=&\ \sqbrad{x\in G}{\Psi(x;n)\geq p\wedga \log \abs{T\cap 2^n}\geq \Psi(x;n)+h(n)}.
\end{align*}
By Lemma \ref{jOgzCAVZg3} for $\Theta(n):=h(n)$ there exists $c$ with 
$\forall \sigma\in 2^{n}: \mu(Q_{\sigma})\leq 2^{c+h(n)-n}$, so
\[
\sum_{\sigma\in 2^n} \mu(Q_{\sigma})\leq 2^{c+h(n)}.
\]
By \eqref{sFI8yT2fLF} each $x\in Q_{n}^p$ belongs to at least $2^{p+h(n)}$ many
 of the $Q_{\sigma}, \sigma\in 2^n$ so
\[
\mu(Q_{n}^p)\leq 2^{-p-h(n)}\cdot \sum_{\sigma\in 2^n} \mu(Q_{\sigma})\leq 2^{c-p}.
\]
Let $f(p):=\min\sqbrad{n}{g(n)>p}$ so $f\leqT z$. By the hypothesis, $z\in G$ and  
\begin{equation}\label{oo4rXKdPsa}
n> f(p)\impl z\in Q_{n}^p.
\end{equation}
Let $(n_s)$ be a computable enumeration of $\emptyset'$ without repetitions, so 
$(Q_{s}^{n_s})$ is a Solovay difference test.
Since $z\not\geqT\zj$, there exist infinitely many $s$
with $s> f(n_s)$.  By \eqref{oo4rXKdPsa} we have  $z\in Q_{s}^{n_s}$ for infinitely many $s$, 
so $z$ is not difference random.
\end{proof}

A similar argument gives Theorem \ref{O65ckpRLDvabb3} from \S\ref{yP4cTWTJZG}, regarding trees with computable width.

\begin{thm}
Suppose that $T$ is a tree and $(\abs{T\cap 2^n})$ is computable. If $z$ is difference random 
and $T\leqT z$, there is no order $g\leqT z$ with 
 $\log \abs{T\cap 2^n}\geq g(n)+\ds(T\cap 2^n)$.
\end{thm}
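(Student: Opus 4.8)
We sketch the argument, which runs parallel to the previous theorem (with a computable upper bound $h$ on the deficiency), the new point being that although the deficiency bound is no longer computable, the width is, and this is exactly what is needed to replace $h$. Suppose for contradiction that $g\leqT z$ is an order with $\log\abs{T\cap 2^n}\geq g(n)+\ds(T\cap 2^n)$ for all $n$. It suffices to show that $z$ is not difference random, so by \eqref{oDyfIYIan} we may assume $z\not\geqT\zj$. Write $w(n):=\abs{T\cap 2^n}$, which is computable. First I would normalise $g$: replacing it by $n\mapsto\min\{g(n),\lfloor\log w(n)\rfloor\}$ — still a $z$-computable order satisfying the displayed inequality, using that $w$ is computable, nondecreasing (as $T$ is pruned) and unbounded (as $T$ is proper) — we may assume $2^{g(n)}\leq w(n)$ for every $n$. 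Fix Turing functionals $\Phi,\Psi$ with $\Phi(z;n)=T\cap 2^n$ a pruned tree and $\Psi(z;n)=g(n)$, and set the Turing functional $\Theta(x;n):=\max\{0,\lfloor\log w(n)\rfloor-\Psi(x;n)\}$, so that $\Theta(z;n)=\lfloor\log w(n)\rfloor-g(n)\geq\ds(T\cap 2^n)$ (the last inequality because $\lfloor\log w(n)\rfloor\geq g(n)+\ds(T\cap 2^n)$, the right side being an integer).

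Next I would apply Lemma \ref{jOgzCAVZg3} to the pair $\Phi,\Theta$ and the $\Pi^0_1$ class $G:=\{x:\forall n,\ \ds(\Phi(x;n))\leq\Theta(x;n)\}$; here $z\in G$, and $G$ is $\Pi^0_1$ because once $\Phi(x;n),\Psi(x;n)$ converge the event ``$\exists\sigma\in\Phi(x;n)\ K(\sigma)<n-\Theta(x;n)$'' is $\Sigma^0_1$. The lemma provides a constant $c$ with $\mu(Q_{\sigma}^{p}\cap G)\leq 2^{c+p-\abs{\sigma}}$, hence $\sum_{\sigma\in 2^n}\mu(Q_{\sigma}^{p}\cap G)\leq 2^{c+p}$ for all $n,p$. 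The counting step: for $n,q$ put $S_n^q:=\{x:q\leq\Psi(x;n)\leq\lfloor\log w(n)\rfloor\ \wedge\ \abs{\Phi(x;n)}\geq w(n)\}$, which is $\Sigma^0_1$ uniformly in $n,q$. If $x\in G\cap S_n^q$ and $\Psi(x;n)=q'$, then $\Theta(x;n)=\lfloor\log w(n)\rfloor-q'=:p'$ and $x\in Q_{\sigma}^{p'}\cap G$ for at least $\abs{\Phi(x;n)}\geq w(n)\geq 2^{p'+q'}$ many $\sigma\in 2^n$; comparing with the bound above, the portion of $G\cap S_n^q$ on which $\Psi(\cdot;n)=q'$ has measure $\leq 2^{c-q'}$, and summing over $q'\geq q$ gives $\mu(G\cap S_n^q)\leq 2^{c-q+1}$. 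Also $z\in G\cap S_n^q$ whenever $q\leq g(n)$.

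Finally I would run the settling-time dichotomy exactly as in the preceding proof. Fix a computable injective enumeration $(q_s)$ of $\zj$ and let $f(q):=\min\{n:g(n)\geq q\}\leqT z$, so $z\in G\cap S_n^q$ for all $n\geq f(q)$. Then $(G\cap S_s^{q_s})$ is a difference Solovay test, since $\sum_s\mu(G\cap S_s^{q_s})\leq 2^{c+1}\sum_{q\in\zj}2^{-q}<\infty$. If $f(q_s)>s$ for all but finitely many $s$, then $z$ computes $\zj$: to decide $m\in\zj$ one computes $f(m)$ from $z$ and checks whether $m$ occurs among the first $\max\{s_0,f(m)\}$ enumerated elements (where $s_0$ absorbs the finitely many exceptions), contradicting $z\not\geqT\zj$. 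Hence $f(q_s)\leq s$, and so $z\in G\cap S_s^{q_s}$, for infinitely many $s$; thus $z$ fails to strongly avoid a difference Solovay test, so by Lemma \ref{xtR47lHczE} it is not difference random, a contradiction.

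The step I expect to be the crux is the choice of $\Theta$ and of the sets $S_n^q$ in the second paragraph. Defining $\Theta$ from the computable width and normalising $g$ so that $g(n)\leq\log w(n)$ is precisely what pins the exponent $p'+q'$ to $\lfloor\log w(n)\rfloor$, so that Lemma \ref{jOgzCAVZg3}'s estimate $2^{c+p'}$ collapses to a bound $2^{c-q'}$ that decays in $q'$, and hence — after running along an enumeration of $\zj$ — is summable over the test; everything else is a routine adaptation of the previous theorem.
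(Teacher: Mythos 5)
Your proof is correct and follows essentially the same route as the paper's: apply Lemma \ref{jOgzCAVZg3} with $\Theta(x;n)=\lfloor\log w(n)\rfloor-\Psi(x;n)$ (this is where the computability of the width enters), count multiplicities to get a bound of the form $2^{c-q}$, and run the settling-time dichotomy along a computable enumeration of $\zj$ to obtain a difference Solovay test that $z$ fails to strongly avoid. Your write-up is in fact a bit more careful than the paper's on minor points (normalising $g$ below $\lfloor\log w(n)\rfloor$, explicitly requiring $\abs{\Phi(x;n)}\geq w(n)$ in the open sets, and partitioning by the exact value of $\Psi(x;n)$ so that the lemma's hypothesis $\Theta(x;n)=p$ applies), but these are refinements of the same argument.
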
\begin{proof}
Given $T$ be as above, suppose that $T,g\leqT z$ and 
\begin{equation}\label{sFI8yT2fLFab}
\log \abs{T\cap 2^n}\geq g(n)+\ds(T\cap 2^n).
\end{equation}
It suffices to show that $z$ is not difference random. Without loss of generality, we assume that $z\not\geqT\zj$.
Let $\Phi, \Psi$ be Turing functionals with $\Phi(z;n)=T\cap 2^n$, $\Psi(z;n)=g(n)$ and set 
\begin{align*}
G\ :=&\ \sqbrad{x}{\forall n,\ \ds(\Phi(x;n))\leq \log \abs{T\cap 2^n}- \Psi(x;n)} \\[0.1cm]
Q_{n}^p\ :=&\ \sqbrad{x\in G}{\Psi(x;n)\geq p\wedga \abs{T\cap 2^n}=\Phi(x;n)}\\[0.1cm]
Q^p_{\sigma}\ :=&\ \sqbrad{x\in Q_{|\sigma|}^p}{\sigma\in \Phi(x;|\sigma|)}.
\end{align*}
By Lemma \ref{jOgzCAVZg3} for $\Theta(x; n):=\log\abs{T\cap 2^n}- g(n)$ there exists $c$ with
\[
\forall \sigma\in 2^{n}: \mu(Q^p_{\sigma})\leq 2^{c+\log\abs{T\cap 2^n}-p-n}
\hspace{0.3cm}\textrm{so}\hspace{0.3cm}
\sum_{\sigma\in 2^n} \mu(Q^p_{\sigma})\leq 2^{c+\log\abs{T\cap 2^n}-p}.
\]
By \eqref{sFI8yT2fLFab} each $x\in Q_{n}^p$ belongs to $\abs{T\cap 2^n}$ many 
of the $Q^p_{\sigma}, \sigma\in 2^n$ so
\[
\mu(Q_{n}^p)\leq 2^{-\log \abs{T\cap 2^n}}\cdot \sum_{\sigma\in 2^n} \mu(Q^p_{\sigma})\leq 2^{c-p}.
\]
Let $f(p):=\min\sqbrad{n}{g(n)>p}$ so $f\leqT z$. By the hypothesis $z\in G$ and  
\begin{equation}\label{oo4rXKdPsab}
n> f(p)\impl z\in Q_{n}^p.
\end{equation}
Let $(n_s)$ be a computable enumeration of $\emptyset'$ without repetitions, so 
$(Q_{s}^{n_s})$ is a Solovay difference test.
Since $z\not\geqT\zj$, there exist infinitely many $s$
with $s> f(n_s)$.  By \eqref{oo4rXKdPsab} we have  $z\in Q_{s}^{n_s}$ for infinitely many $s$, 
so $z$ is not difference random.
\end{proof}

Finally we give a general bound, which has
Theorem \ref{O65ckpRLDvabb} from \S\ref{yP4cTWTJZG} as a special case.

\begin{thm}\label{O65ckpRLDv}
Suppose that $g$ is a computable order and $\sum_n 2^{-g(n)}$ converges to a computable real.
Then no difference random real can compute a tree $T$ with 
\begin{equation}\label{sFI8yT2fLFabd}
\log \abs{T\cap 2^n}\geq g(\log \abs{T\cap 2^n})+ \ds(T\cap 2^n).
\end{equation}
\end{thm}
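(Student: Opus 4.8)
The plan is to argue by contradiction, following Theorem \ref{NtI4AEpWrs} and the two preceding theorems of this section but adding two ingredients. Assume $z$ is difference random and computes, through a tree-functional $\Phi$, the (pruned, proper) tree $T=\Phi(z)$, so that \eqref{sFI8yT2fLFabd} holds; since difference randomness implies randomness, \eqref{oDyfIYIan} gives $z\not\geqT\zj$, and the goal is to contradict this by showing $z\geqT\zj$. First I would reduce to the case $g(n)\leq n$ for all $n$, replacing $g$ with $n\mapsto\min(g(n),n)$: this is still a computable order, $\sum_n 2^{-\min(g(n),n)}$ is still a computable real, and \eqref{sFI8yT2fLFabd} only becomes weaker. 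Then set $\Theta(x;n):=\floor{\log\abs{\Phi(x;n)}}-g(\floor{\log\abs{\Phi(x;n)}})\in\Nat$ and $G:=\sqbrad{x}{\forall n,\ \ds(\Phi(x;n))\leq\Theta(x;n)}$, a \pzn class; because $\ds(T\cap 2^n)$ and $g(\floor{\log\abs{T\cap 2^n}})$ are integers, \eqref{sFI8yT2fLFabd} yields $\ds(T\cap 2^n)\leq\Theta(z;n)$, so $z\in G$. I would then invoke Lemma \ref{jOgzCAVZg3} for this $\Theta$ to obtain a constant $c$ with $\mu(Q^p_\sigma\cap G)\leq 2^{c+p-\abs{\sigma}}$ for all $p,\sigma$, where $Q^p_\sigma=\sqbrad{x}{\Theta(x;\abs{\sigma})=p\wedga\sigma\in\Phi(x;\abs{\sigma})}$.

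The next step is a level-by-level counting through the dyadic ``octaves'' of the width. For $n,m\in\Nat$ put $W_n^m:=\sqbrad{x\in G}{\floor{\log\abs{\Phi(x;n)}}=m}$. Each $x\in W_n^m$ has $\Theta(x;n)=m-g(m)$ and $\abs{\Phi(x;n)}\geq 2^m$, so it lies in at least $2^m$ of the sets $Q^{m-g(m)}_\sigma\cap G$, $\sigma\in 2^n$, whence
\[
2^m\,\mu(W_n^m)\ \leq\ \sum_{\sigma\in 2^n}\mu\bigl(Q^{m-g(m)}_\sigma\cap G\bigr)\ \leq\ 2^{c+m-g(m)},
\qquad\text{so}\qquad \mu(W_n^m)\leq 2^{c-g(m)}.
\]
Since for fixed $n$ the $W_n^m$ are pairwise disjoint and their union over $m'\geq m$ is $G\cap E_n^m$ with $E_n^m:=\sqbrad{x}{\abs{\Phi(x;n)}\geq 2^m}$ \szn uniformly in $n,m$, summing gives $\mu(G\cap E_n^m)\leq 2^c\sum_{m'\geq m}2^{-g(m')}=:2^c\,T_g(m)$. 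This is where the hypothesis is used: $T_g(m)$ is, uniformly in $m$, a computable real (the computable real $\sum_{m'}2^{-g(m')}$ minus a computable finite sum) and $T_g(m)\to0$, even though $\sum_m T_g(m)$ may diverge. So I would fix a computable $\phi$ with $2^c\,T_g(\phi(i))<2^{-i}$ for all $i$ — a ``rescaling'' turning the slow tail decay into genuine geometric decay.

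Let $(n_s)$ enumerate $\zj$ without repetitions and set $U_s:=E_s^{\phi(n_s)}$, \szn uniformly in $s$. Then $\mu(G\cap U_s)\leq 2^c\,T_g(\phi(n_s))<2^{-n_s}$, and the $n_s$ being distinct makes $\sum_s\mu(G\cap U_s)<\infty$, so $(G\cap U_s)$ is a difference Solovay test. As $z\in G$ and $z$ is difference random, Lemma \ref{xtR47lHczE} forces $z\notin G\cap U_s$, i.e.\ $\abs{\Phi(z;s)}<2^{\phi(n_s)}$, for all but finitely many $s$. With $\gamma(m):=\min\sqbrad{n}{\abs{\Phi(z;n)}\geq 2^m}$ — total and $z$-computable since $T$ is proper and has nondecreasing level sizes — this reads $\gamma(\phi(n_s))>s$ for all $s$ beyond some $s_0$. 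As in the earlier proofs, a $z$-computable function exceeding $s$ at the $s$-th element enumerated into $\zj$ computes $\zj$, since $m\in\zj$ iff $m\in\{n_0,\dots,n_{s_0-1}\}\cup\{n_0,\dots,n_{\gamma(\phi(m))}\}$. Hence $\zj\leqT\gamma\circ\phi\leqT z$, the desired contradiction.

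The step I expect to be the main obstacle is obtaining a usable measure bound: unlike in Theorem \ref{NtI4AEpWrs} and the computable-upper-bound version, the $\Sigma^0_1$ localizations $G\cap E_n^m$ decay only like the tail $T_g(m)$ of a convergent series, which need not be summable along the $\zj$-enumeration, so the Solovay-test argument does not apply directly; the fix is the rescaling by $\phi$, available precisely because $\sum_n 2^{-g(n)}$ is a \emph{computable} real. A secondary difficulty is that the deficiency budget $\Theta(x;n)$ depends on $\log\abs{\Phi(x;n)}$, which is neither computable nor $z$-computable in $n$, and this is what makes the stronger Lemma \ref{jOgzCAVZg3} (rather than Lemma \ref{DP8sKcDJCf}) together with the octave decomposition necessary to reach the clean per-octave bound $\mu(W_n^m)\leq 2^{c-g(m)}$.
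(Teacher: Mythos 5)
Your proposal is correct and follows essentially the same route as the paper's proof: the same class $G$, the same application of Lemma \ref{jOgzCAVZg3} with $\Theta$ the width-minus-$g$ budget, the same per-width-level counting giving $\mu\leq 2^{c-g(m)}$, the same use of the computability of $\sum_n 2^{-g(n)}$ to pick a tail threshold (your $\phi$ is the paper's $h$), and the same difference Solovay test indexed by a repetition-free enumeration of $\zj$ followed by the domination/settling-time dichotomy. The only differences are cosmetic (running the final step via strong avoidance and an explicit reduction $\zj\leqT\gamma\circ\phi\leqT z$ rather than the paper's contrapositive, plus your more careful handling of floors and the normalization $g(n)\leq n$).
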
\begin{proof}
Given $T,g$ be as above and $z\geqT T$ we show that $z$ is not difference random. 

Let $\Phi$ be a Turing functional with $\Phi(z;n)=T\cap 2^n$ and  
\begin{align*}
G\ :=&\ \sqbrad{x}{\forall n,\ \ds(\Phi(x;n))\leq \log\abs{\Phi(x;n)}- g(\log\abs{\Phi(x;n)})} \\[0.1cm]
Q_{n}^p\ :=&\ \sqbrad{x\in G}{\log\abs{\Phi(x;n)}=p}\\[0.1cm]
Q^p_{\sigma}\ :=&\ \sqbrad{x\in Q_{|\sigma|}^p}{\sigma\in \Phi(x;|\sigma|)}.
\end{align*}
By Lemma \ref{jOgzCAVZg3} for $\Theta(n):=\log\abs{\Phi(x;n)}- g(\log\abs{\Phi(x;n)})$ there exists $c$ with
\[
\forall \sigma\in 2^{n}: \mu(Q^p_{\sigma})\leq 2^{c+p-g(p)-n}
\hspace{0.3cm}\textrm{so}\hspace{0.3cm}
\sum_{\sigma\in 2^n} \mu(Q^p_{\sigma})\leq 2^{c+p-g(p)}.
\]
By \eqref{sFI8yT2fLFabd} each $x\in Q_{n}^p$ belongs to $2^p$ many 
of the $Q^p_{\sigma}, \sigma\in 2^n$ so
\[
\mu(Q_{n}^p)\leq 2^{-p}\cdot \sum_{\sigma\in 2^n} \mu(Q^p_{\sigma})\leq 2^{c-g(p)}.
\]
Since $g, \sum_i 2^{-g(i)}$ are computable, so  is 
$h(n):=\min\sqbrad{n}{\sum_{i\geq k} 2^{-g(i)}<2^{-n-c}}$.
Let 
\[
E_n^{k}:=\bigcup_{p\geq h(k)} Q_n^p
\hspace{0.3cm}\textrm{so}\hspace{0.3cm}
\mu(E_n^{k})\leq \sum_{p\geq h(k)} \mu(Q_{n}^p) < 2^{-k}.
\]
Let $f(k):=\min\sqbrad{n}{\log\abs{\Phi(x;n)}>h(k)}$ so $f\leqT z$.
By the hypothesis $z\in G$ and  
\begin{equation}\label{oo4rXKdPsabd}
n> f(k)\impl z\in E_{n}^k.
\end{equation}
Let $(n_s)$ be a computable enumeration of $\emptyset'$ without repetitions, so 
$(E_{s}^{n_s})$ is a Solovay difference test.
Since $z\not\geqT\zj$, there exist infinitely many $s$
with $s> f(n_s)$.  By \eqref{oo4rXKdPsabd} we have  $z\in E_{s}^{n_s}$ for infinitely many $s$, 
so $z$ is not difference random.
\end{proof}

Theorem \ref{O65ckpRLDv} also holds when  
$\sum_n 2^{-g(n)}$ is merely non-random, rather than computable. 
We omit this proof as it  only requires a  standard adaptation of the above argument.

\begin{rem}
Consider a \pf machine $M$ with $K_M(n)=2\log n$ and let us refer to the deficiencies $\abs{\sigma}-K_M(\sigma)$ 
as {\em typical} (as they correspond to a roughly optimal computable information content measure).
Then  $\log \abs{T\cap 2^n}$ is typically compressed by $\log \abs{T\cap 2^n}-2\log\log \abs{T\cap 2^n}$ bits.
So for $g(n):=2\log n$,  Theorem \ref{O65ckpRLDv}  says that some   $\sigma\in T\cap 2^n$ are compressed more than the typical compression of the 
logarithm $\log \abs{T\cap 2^n}$ of the corresponding width $\abs{T\cap 2^n}$ of $T$.
We do not know if this is true with respect to the universal compression, namely if incomplete randoms fail to compute trees
$T$ with $\ds(T\cap 2^n)\leq \ds(\log\abs{T\cap 2^n})$. The latter property is 
the modification of \eqref{sFI8yT2fLFabd} with $K$ in place of $g$.
\end{rem}
\section{Negligibility and depth of classes of reals}\label{vbNyewP8tQ}

In this section we review the notion of negligibility and depth. A weaker version of our Theorem \ref{7i3g6ty9SJ} can be derived from the work on deep $\Pi^0_1$ classes, so we investigate if this is also the case for Theorem \ref{7i3g6ty9SJ} itself. In particular we prove Theorems \ref{FJCm7I5aYa} and \ref{41rMOJConYa} which show that Theorem \ref{7i3g6ty9SJ} escapes this scope. 

Negligible classes were studied systematically in \cite{neglivyuginold} 
and earlier in \cite{leviniandcLevin84, neglivyuginLevin, MR0316227}.

\begin{defi}\label{TsHLkveM5D}
We say that $\CC\subseteq\twome$ is 
\begin{itemize}
\item {\em negligible} if $\mu(\sqbrad{y}{\exists x\in \CC,\ x\leqT y})=0$
\item {\em $tt$-negligible} if $\mu(\sqbrad{y}{\exists x\in \CC,\ x\leq_{tt} y})=0$.
\end{itemize}
\end{defi}

So  $\CC$ is negligible iff $\mu(\Phim(\CC))=0$ for all Turing functionals $\Phi$.
Similarly,
\begin{equation}\label{2yoZgaEmEK}
\textrm{$\CC$ is $tt$-negligible iff $\mu(\Phim(\CC))=0$ for all total Turing functionals $\Phi$}
\end{equation}
where $\Phi$ is total iff $\Phi(x;n)$ is defined for each oracle $x$ and each $n$.
\citet{bslBienvenuP16} defined
negligibility equivalently in terms of the 
universal continuous \lce semimeasure $\Mb$ from \cite{MR0307889}. Our
Definition \ref{TsHLkveM5D} is their
 \cite[Proposition 3.2]{bslBienvenuP16}.
By \cite{MR0307889} every Turing functional $\Phi$ corresponds to the \lce semimeasure 
$\mu_{\Phi}(\sigma):=\mu(\Phim(\sigma))$
and vice-versa, for every \lce semimeasure $\nu$ there exists a Turing functional $\Phi$ with
$\nu=\mu_{\Phi}$. In the case of Turing functionals which are total on a set of oracles of measure 1,
$\mu_{\Phi}$ is a computable measure and a similar equivalence holds.
So \eqref{2yoZgaEmEK} implies that
\[
\textrm{$\CC$ is {\em $tt$-negligible} if $\nu(\CC)=0$ for every computable measure $\nu$.}
\]
\citet{bslBienvenuP16} considered a strengthening of negligibility for \pz classes based on the notion of
{\em logical depth} by \citet{Bennett1988}. A real is {\em deep} if the compression of its initial 
segments with respect to any computable time-bound on the running time of the
universal machine is {\em far} from their optimal compression (Kolmogorov complexity).
Bennett's  {\em  slow growth law} (shown in \cite[Theorem 1]{Bennett1988} and more generally in \cite{StephanMoserdeep}) 
says that the  initial segments of deep sequences cannot be produced via probabilistic computation.

According to \cite[Definition 4.1]{bslBienvenuP16}, a \pz class $P$ is {\em deep} if
there is a computably vanishing upper bound on the probability that  the universal
Turing functional produces the set $P_n$ of  $n$-bit prefixes of the reals in this class. They showed that this is equivalent to the existence of a computable order $g$ such that $\mm(P_n)\leq 2^{-g(n)}$, where $\mm$ is the a priori discrete probability.
The same is true with the a priori continuous semimeasure $\Mb$ in place of $\mm$.
Several examples of deep \pz classes were given in \cite{bslBienvenuP16}, including
the class of codes of complete extensions of $\PA$ and the class of 
$g$-fat incompressible trees, where $g$ is a computable order.

\subsection{Negligibility  beyond effectively closed classes}
We show that two facts from \cite{bslBienvenuP16} concerning negligible \pz classes hold
more generally for negligible \pzt classes.
By \cite[Theorem 5.1, 5.2]{bslBienvenuP16}:
\begin{enumerate}[\hthree(a)]
\item weakly 1-randoms   do not $tt$-compute members of $tt$-negligible \pz classes. 
\item weakly 2-randoms  do not compute members of negligible \pz classes.
\end{enumerate}
We show the analogue of (a) for weakly 2-randoms  and extend (b) to \pzt classes.

\begin{thm}
Weakly 2-random reals:
\begin{enumerate}[(i)]
\item  do not $tt$-compute members of $tt$-negligible \pzt classes.
\item  do not compute members of negligible \pzt classes.
\end{enumerate}
\end{thm}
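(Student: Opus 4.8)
The plan is to reduce both parts to the standard characterization of weak 2-randomness: a real $z$ is weakly 2-random iff it belongs to no null $\Pi^0_2$ class (equivalently, it passes every generalized Martin-L\"of test). Granting this, it is enough to prove the following. If $\CC$ is a negligible $\Pi^0_2$ class and $\Phi$ is any Turing functional --- a \emph{total} one, in the case of part (i) --- then
\[
\Phim(\CC):=\sqbrad{y}{\Phi(y)\text{ is total and }\Phi(y)\in\CC}
\]
is a null $\Pi^0_2$ class. Indeed, $z$ computes (resp.\ $tt$-computes) a member of $\CC$ exactly when $z\in\Phim(\CC)$ for some Turing functional (resp.\ some total Turing functional) $\Phi$, and a weakly 2-random real avoids each of these countably many null $\Pi^0_2$ classes, so the theorem follows.

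Nullity is immediate from the hypothesis: by the remark following Definition \ref{TsHLkveM5D} (resp.\ by \eqref{2yoZgaEmEK}), negligibility of $\CC$ gives $\mu(\Phim(\CC))=0$ for every Turing functional, and $tt$-negligibility gives it for every total one; taking $\Phi$ the identity in either case yields $\mu(\CC)=0$, so $\CC$ has empty interior. For the arithmetical complexity, fix a presentation $\CC=\bigcap_n U_n$ with $(U_n)$ uniformly $\Sigma^0_1$, which we may take to be decreasing. For each $n$ the set $\sqbrad{y}{\Phi(y)\text{ has a prefix lying in }U_n}$ is $\Sigma^0_1$ uniformly in $n$ --- enumerate $[\tau]$ into it once $\Phi^\tau$ has produced an output string one of whose prefixes has been enumerated into $U_n$ --- so $A_\Phi:=\bigcap_n\sqbrad{y}{\Phi(y)\text{ has a prefix lying in }U_n}$ is a $\Pi^0_2$ class. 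Plainly $\Phim(\CC)\subseteq A_\Phi$. Conversely, let $y\in A_\Phi$. If $\Phi(y)$ were a finite string $\sigma$, then since $\sigma$ has only finitely many prefixes but the witnessing condition holds for infinitely many $n$, some fixed $\tau\preceq\sigma$ would satisfy $[\tau]\subseteq U_n$ for infinitely many $n$, hence for all $n$ because $(U_n)$ is decreasing, so $[\tau]\subseteq\CC$ --- impossible since $\CC$ has empty interior. Thus $\Phi(y)$ is total and has a prefix in each $U_n$, so $\Phi(y)\in\bigcap_n U_n=\CC$ and $y\in\Phim(\CC)$. Hence $\Phim(\CC)=A_\Phi$ is a null $\Pi^0_2$ class; parts (i) and (ii) then follow by restricting $\Phi$ to total functionals, resp.\ allowing all functionals.

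The argument is routine once the characterization of weak 2-randomness via null $\Pi^0_2$ classes is invoked, and the hypotheses are calibrated precisely for it: negligibility controls $\mu(\Phim(\CC))$ for arbitrary functionals, whereas $tt$-negligibility only does so for total ones, which is exactly the strength a $tt$-reduction provides. The one step that is not purely formal, and where I would take care, is the identification $A_\Phi=\Phim(\CC)$ for partial $\Phi$: a partial computation with finite output $\sigma$ satisfying $[\sigma]\subseteq\CC$ would break it, and the remedy is precisely the observation that a negligible (hence null) class has empty interior. The same scheme with $\Pi^0_1$ classes and $\Pi^0_1$ tests recovers the results (a), (b) of \cite{bslBienvenuP16} quoted above, part (a) requiring only weak 1-randomness since for a closed $\CC$ and total $\Phi$ the preimage $\Phim(\CC)$ is already closed.
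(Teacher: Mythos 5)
Your proof is correct and takes essentially the same route as the paper: both show that $\Phim(\CC)$ is a null $\Pi^0_2$ class and then invoke the characterization of weakly 2-random reals as those avoiding every null $\Pi^0_2$ class, with negligibility (resp.\ $tt$-negligibility) supplying nullity for arbitrary (resp.\ total) functionals. Your extra care with partial functionals --- the empty-interior argument identifying $A_\Phi$ with $\Phim(\CC)$ --- is a sound elaboration of a detail the paper compresses into the identity $\Phim\parb{\bigcap_i Q_i}=\bigcap_i\Phim(Q_i)$.
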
\begin{proof}
Let $\CC=\bigcap_i Q_i$ be a \pzt class, where the $Q_i$ are uniformly \sz sets of reals, 
and $\Phi$ be a Turing functional. 
Since $\Phi$ is effectively continuous, the $\Phim(Q_i)$ are uniformly \szn. Then  
\[
\Phim(\CC)=\Phim\left(\bigcap_i Q_i\right)=\bigcap_i \Phim(Q_i).
\]
This shows that $\Phim(\CC)$ is a \pzt class. 
If $\CC$ is $tt$-negligible then $\mu(\Phim(\CC))=0$ for each total $\Phi$, so any
real that $tt$-computes a member of $\CC$ fails to be weakly 2-random.
If $\CC$ is negligible,  $\mu(\Phim(\CC))=0$ so any real that computes 
a member of $\CC$ fails to be weakly 2-random.
\end{proof}
\subsection{Depth beyond effectively closed classes}\label{TNQpxFKaL8} 
A key property of deep \pz classes was established in \cite[Theorem 5.3]{bslBienvenuP16}: 
\begin{equation}\label{1UpFa6eE3Z}
\textrm{a random computes a member of a deep \pz class iff it computes $\zj$.}
\end{equation}
Then as examples of deep $\Pi^0_1$ classes, they considered the class $\mathcal{K}_{f,\ell,d}$ where $\ell,f,d$ are computable functions and $\mathcal{K}_{f,\ell,d}$ consists of sequences $\vec{F}=(F_1,F_2,F_3,\dots)$ where for all $i$, $F_i$ is a finite set of $f(i)$ strings $\sigma$ of length $\ell(i)$ such that $K(\sigma)\geq \ell(i)-d(i)$. They then proved that if $\ell$ is increasing and $f(i)/2^{d(i)}$ takes arbitrarily large values, then $\mathcal{K}_{f,\ell,d}$ is a deep $\Pi^0_1$ class \citep[Theorem 7.7]{bslBienvenuP16}. We can show the following results using the notion of deep \pz classes. 

Recall that a tree $T$ is 
{\em proper} if $\abs{T\cap 2^{k}}$ is unbounded, and 
{\em effectively-proper} if $\abs{T\cap 2^{k}}\geq g(k)$ for a computable order $g$. A real $z$ is {\em computably dominated} if every function $f\leqT z$ is dominated by a computable function.

\begin{coro}[after \cite{bslBienvenuP16}]\label{sjbcegRm4f}
Suppose that $z$ is random.
\begin{enumerate}[(i)]
\item If $z\not\geq_T\zj$ then $z$ does not compute any effectively-proper pruned incompressible tree.  
\item Computably dominated randoms do not compute proper pruned incompressible trees.
\end{enumerate}
\end{coro}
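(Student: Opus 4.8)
The plan is to obtain both items from the deep-class results of \cite{bslBienvenuP16}, used in the two forms recalled in the excerpt: (a) a random real computing a member of a deep \pz class computes $\zj$; and (b) for a computable order $g$ and a constant $c\in\Nat$, the class of pruned trees whose width is bounded below by $g$ and whose deficiency is below $c$ is a deep \pz class whenever it is nonempty. Item (i) is then a translation exercise; item (ii) reduces to (i) by a domination argument.

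\emph{Item (i).} I would argue by contraposition. Given a random $z$ with $z\not\geqT\zj$ computing an effectively-proper pruned incompressible tree $T$, fix a computable order $g$ with $\abs{T\cap 2^k}\geq g(k)$ for all $k$ and a $c\in\Nat$ with $\ds(T)<c$ (both exist by hypothesis). Identifying trees with the characteristic functions of their string sets under a fixed computable enumeration of $\twomel$, let $\CC_{g,c}$ be the set of such functions $S$ for which $S$ is a pruned tree, $\abs{S\cap 2^k}\geq g(k)$ for all $k$, and $K(\sigma)\geq\abs{\sigma}-c$ for all $\sigma\in S$. Each clause is a co-c.e.\ condition on $S$: downward-closure and prunedness are conjunctions of clopen conditions, the width bound uses computability of $g$, and $c$-incompressibility holds because $\setbra{\sigma}{K(\sigma)<\abs{\sigma}-c}$ is c.e. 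Hence $\CC_{g,c}$ is a \pz class, nonempty since $T\in\CC_{g,c}$, and therefore deep by \cite{bslBienvenuP16}. As $z$ is random and computes the member $T$, (a) gives $z\geqT\zj$ --- contradicting $z\not\geqT\zj$.

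\emph{Item (ii).} Let $z$ be a computably dominated random and suppose, for a contradiction, that $z$ computes a proper pruned incompressible tree $T$. Since $T$ is pruned, every $\sigma\in T\cap 2^m$ has an immediate successor in $T$, so $w(m):=\abs{T\cap 2^m}$ is nondecreasing; since $T$ is proper, $w$ is unbounded. Thus $f(k):=\min\setbra{m}{w(m)\geq k}$ is total and $z$-computable, so computable domination supplies a nondecreasing computable $\tilde f\geq f$. Then $g(m):=\max\parb{\setbra{k}{\tilde f(k)\leq m}\cup\{0\}}$ is a computable order, and for all $m$ one has $w(m)\geq w\parb{\tilde f(g(m))}\geq g(m)$, using monotonicity of $w$ and $w(\tilde f(k))\geq w(f(k))\geq k$. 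Hence $T$ is effectively-proper, so by the contrapositive of (i), $z\geqT\zj$. But $\zj$ is not computably dominated (its settling-time function dominates every computable function), and computable domination passes to $\leqT$-predecessors, so $z$ is not computably dominated --- a contradiction.

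\emph{Main obstacle.} There is no new analytic idea here; the weight of the argument sits in \cite{bslBienvenuP16}. The point needing care in (i) is checking that $\CC_{g,c}$ is genuinely \pz (the only subtle clause being $c$-incompressibility), together with the conceptual observation that it is precisely the \emph{computability} of the width lower bound that activates the depth machinery --- which is why (i) must assume ``effectively-proper'' rather than merely ``proper''. The one substantive step in (ii) is the short reduction showing that over a computably dominated oracle ``proper'' upgrades to ``effectively-proper'', and this rests only on the monotonicity of the width of a pruned tree; the remaining ingredients (a nondecreasing computable dominating function, non-domination of $\zj$, downward closure of computable domination) are standard.
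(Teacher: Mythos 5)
Your proof is correct and follows essentially the same route as the paper: both items are obtained by quoting the Bienvenu--Porter depth machinery (a random real computing a member of a deep \pz class computes $\zj$), and your reduction in (ii) from ``proper'' to ``effectively-proper'' over a computably dominated oracle is exactly the domination argument the paper asserts in one line, spelled out in more detail. The only cosmetic difference is in (i): the paper converts the tree into a member of the deep class of computably-indexed sequences of finite sets of bounded deficiency from \cite[Theorem 7.7]{bslBienvenuP16}, whereas you directly invoke the depth of the \pz class of $c$-incompressible pruned trees whose width is lower-bounded by a computable order, which is likewise one of the deep-class examples the paper attributes to \cite{bslBienvenuP16}.
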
\begin{proof}
For (i), let $z$ be random real and suppose that $T$ is a proper pruned incompressible tree with $T\leq_T z$. Consider the sequence $(T\cap 2^i)$. Let $c$ be the deficiency of $T$, and set $f(i)=|T\cap 2^i|$, $\ell(i)=i$ and $d(i)=c$. Then $(T\cap 2^i)$ is a member of $\mathcal{K}_{f,\ell,d}$, and by \citep[Theorem 7.7]{bslBienvenuP16} $\mathcal{K}_{f,\ell,d}$ is a deep $\Pi^0_1$ class. Since $z$ computes $T$, by \cite[Theorem 5.3]{bslBienvenuP16} $z$ cannot be incomplete random. 

For (ii) suppose that $z$ is computably dominated and $Q\leqT z$ is a proper pruned incompressible tree.
Then $z\not\geqT\zj$ and $h(k):=\abs{Q\cap 2^{k}}$ is computable in $z$. Since $z$ is computably dominated, $h$ is lower-bounded by a computable order. So
$Q$ is effectively-proper, contradicting (i).
\end{proof}

This was generalized to all proper incompressible trees in 
Theorem \ref{7i3g6ty9SJ}. It is natural to ask if the latter can be derived from  \eqref{1UpFa6eE3Z}.
This is not the case.  We first show that no proper tree belongs to a \pz class of proper trees, unless it is effectively-proper.
Let $\TT$ denote the set of  pruned trees. 
Since the closed sets of $\twome$ are representable as  pruned trees, 
$\TT$ can be viewed as the set of closed sets in the Cantor space.
A finite tree $F$ is a  {\em prefix of $T$}, denoted by $F\prec T$,  
if $F=T\cap 2^{\leq n}$ for some $n$.
 The {\em prefix topology}  on $\TT$ is generated by the basic open sets 
\begin{equation}\label{oHkQ4KP8LS}
\dbra{F}:=\sqbrad{T\in\TT}{F\prec T}
\end{equation}
indexed by the tree-prefixes $F$. This
coincides with the {\em hit-or-miss} and {\em Vietoris} topologies on the closed sets of the Cantor space 
and $\TT$ is compact \cite[Appendix B]{Molchanov}.
Fix an effective coding of  tree-prefixes into $\twomel$ which preserves the prefix relations,
where the length of the codes of tree-prefixes $T\cap 2^{\leq n}$  is given by a computable 
$n\mapsto p_n$. This induces an effective homeomorphism of $\TT$ onto an effectively closed subspace of $\twome$, and allows to consider \pz classes in $\TT$.
In this way, the \pz subsets of  $\TT$ can be viewed  \pz classes of $\twome$ and vice-versa.
\begin{prop}\label{kpmkMWfCp8}
In the space $\TT$ of pruned trees with the prefix topology, for each $c$:
\begin{enumerate}[(i)]
\item the class  of $c$-incompressible pruned proper trees is not closed
\item if $Q$ is a \pz class of proper pruned trees, there exists a computable order $g$ such that all trees in $Q$ are $g$-proper.
\end{enumerate}
\end{prop}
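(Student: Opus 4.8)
The substantive part is (ii); part (i) is a direct construction. For (ii) I would first run a compactness argument to obtain a (possibly non-effective) uniform lower bound on the widths, and then effectivize it. Throughout, recall that for a pruned tree $T$ the width $n\mapsto\abs{T\cap 2^n}$ is non-decreasing, since each node has a successor and the successor sets of distinct nodes are disjoint. Given a $\Pi^0_1$ class $Q\subseteq\TT$ all of whose members are proper, set $\gamma(m):=\min\sqbrad{n}{\forall T\in Q,\ \abs{T\cap 2^n}\geq m}$. The first step is to check $\gamma(m)<\infty$ for each $m$: otherwise for every $n$ there is $T_n\in Q$ with $\abs{T_n\cap 2^n}<m$, hence $\abs{T_n\cap 2^k}<m$ for all $k\leq n$ by monotonicity; since $Q$ is closed in the compact space $\TT$, a subsequence of $(T_n)$ converges to some $T_\infty\in Q$, and passing to the limit level by level gives $\abs{T_\infty\cap 2^k}<m$ for all $k$, contradicting properness of $T_\infty$. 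As $\gamma$ is non-decreasing and (because $\abs{T\cap 2^N}\leq 2^N$) unbounded, its pseudo-inverse $h(n):=\max\sqbrad{m}{\gamma(m)\leq n}$ is a total, non-decreasing, unbounded function, and monotonicity of widths yields $\abs{T\cap 2^n}\geq h(n)$ for every $T\in Q$ and every $n$.

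It then remains to find a computable order below $h$. The key point is that the predicate $R(n,m)$ asserting $\forall T\in Q,\ \abs{T\cap 2^n}\geq m$ is $\Sigma^0_1$ uniformly in $(n,m)$: via the effective coding of tree-prefixes set up above, $Q\cap\sqbrad{T}{\abs{T\cap 2^n}<m}$ is a $\Pi^0_1$ subclass of the effectively compact space $\TT$, and emptiness of a $\Pi^0_1$ class is a c.e.\ condition. Hence $h(n)=\max\sqbrad{m}{R(n,m)}$ is the pointwise non-decreasing limit of a uniformly computable sequence $h_s$ (for $h_s(n)$, search $s$ steps for witnesses to $R(n,\cdot)$); after replacing $h_s(n)$ by $\max_{j\leq n}h_s(j)$ so that it is also non-decreasing in $n$, the diagonal $g(n):=h_n(n)$ is computable, non-decreasing, and $\leq h$, and it is unbounded by the usual argument (given $m$, pick $N,S$ with $h_S(N)\geq m$; then $g(n)\geq m$ for $n\geq\max(N,S)$). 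So $g$ is a computable order and $\abs{T\cap 2^n}\geq h(n)\geq g(n)$ for all $T\in Q$ and all $n$; i.e.\ every member of $Q$ is $g$-proper.

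For (i), fix $c$; we may assume there is a $c$-incompressible proper pruned tree $T^\ast$, since otherwise the class is trivially closed. The plan is to exhibit members of the class converging, in the prefix topology, to a non-proper tree. First note that $c$-incompressibility, unlike properness, passes to prefix-topology limits: if $T_k\to T$ and $\sigma\in T$ then $\sigma\in T_k$ for all large $k$, so $\ds(\sigma)<c$. Then I would build a path $x\in[T^\ast]$ greedily so that, for every $n$, the subtree $T_n$ of $T^\ast$ consisting of the nodes $\preceq x\restr_n$ together with those $\succeq x\restr_n$ is again proper: above level $n$ the width of $T_n$ is, level by level, the sum over the successors $b$ of $x\restr_n$ in $T^\ast$ of the widths of the subtrees of $T^\ast$ below $b$, so at least one such subtree is proper and its root is taken as $x\restr_{n+1}$. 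Each $T_n$ is a pruned (as $T^\ast$ is), proper, $c$-incompressible tree with $T_n\cap 2^{\leq n}=\sqbrad{x\restr_m}{m\leq n}$, so $T_n\to P_x$, where $P_x:=\sqbrad{x\restr_m}{m\in\Nat}$ has width $1$ and is thus not proper, while it is pruned and $c$-incompressible as a subtree of $T^\ast$. Hence the class of $c$-incompressible proper pruned trees is not closed.

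I expect the main obstacle to be the effectivization in (ii): producing a computable width bound from the bare finiteness of $\gamma$. This rests on the computable compactness of $\TT$ — so that emptiness of its $\Pi^0_1$ subclasses can be enumerated uniformly — together with the standard fact that a total non-decreasing pointwise limit of computable functions lies above a computable order. Part (i) is then routine, the only subtlety being the greedy choice of $x$ so that it stays inside an infinitely wide part of $T^\ast$.
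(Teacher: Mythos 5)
Your proposal is correct and follows essentially the same route as the paper: for (ii) you combine compactness of $\TT$ (to get a uniform width lower bound) with the effectiveness of emptiness-testing for $\Pi^0_1$ subclasses to extract a computable order, which is exactly the paper's argument organized through the bound function $h$ instead of an effectively searched sequence of levels $n_k$; for (i) you use the same witness as the paper, a single-path $c$-incompressible tree as a limit of proper $c$-incompressible trees, with your greedy restriction of a proper tree $T^\ast$ to a path supplying the detail the paper leaves implicit. No gaps worth noting beyond the (shared, harmless) implicit assumption that the class in (i) is nonempty.
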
\begin{proof}
For (i), let $J_c$ be the class  of $c$-incompressible pruned proper trees. Then
\begin{itemize}
\item $\TT-J_c$ contains a $c$-incompressible tree $T$ consisting of a single path
\item any neighborhood around $T$ intersects $J_c$. 
\end{itemize}
So $\TT-J_c$ is not open and $J_c$ is not closed.

For (ii) assume that $Q$ is a \pz class of proper pruned trees and let $Q_n$
be the class of trees that are prefixed by $T\cap 2^{\leq n}$ for some $T\in Q$.
Then $Q_{n+1}\subseteq Q_n$ and by the compactness of $\TT$ we have $Q=\bigcap_n Q_n$.
We claim that there exists increasing $(n_k)$ such that 
\begin{equation}\label{mCiXyCuR2w}
\forall k\ \forall T\in Q_{n_k}:\ \abs{T\cap 2^{n_k}}>k.
\end{equation}
Assuming otherwise, there exists $k$ such that:
\begin{itemize}
\item the class $P$ of $T\in\TT$ with at most $k$ many paths is \pz
\item $P\cap Q_n$ are  \pz classes and  $P\cap Q_{n+1}\subseteq P\cap Q_n$   
\item  $\forall n,\ P\cap Q_n\neq\emptyset$ so by compactness  $P\cap Q\neq\emptyset$
\end{itemize}
which contradicts the hypothesis on $Q$.
So the $n_k$ exist and can be searched effectively. 
Then \eqref{mCiXyCuR2w} holds for a computable increasing $(n_k)$ 
and each  $T\in Q$ is $g$-proper for $g(k)=n_k$.
\end{proof}

Proposition \ref{kpmkMWfCp8} does not eliminate the possibility that every
proper incompressible tree might be a member of a deep \pz class of trees (containing non-proper trees).
The latter is excluded by the following, which includes Theorem \ref{FJCm7I5aYa}, where $\Pi^0_1(\zj)$ means \pz relative to $\zj$.

\begin{thm}\label{FJCm7I5aY}
There exists a perfect incompressible tree $T\leqT\zjj$ which is not a member of any deep \pz class.
Moreover  $T$ can be chosen so that $\{T\}$ is a $\Pi^0_1(\zj)$ class.
\end{thm}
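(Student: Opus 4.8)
The plan is to reduce the theorem to the following assertion: there is a \ml random real $z$ with $z\not\geqT\zj$ that computes a perfect pruned incompressible tree $T$ with $T\leqT\zj$. Granting such $z,T$, both conclusions are immediate. Since $T\leqT\zj$, the code of $T$ in $\twome$ is $\zj$-computable, so the set of finite strings not extended by it is $\zj$-c.e.; hence $\{T\}$ is a $\Pi^0_1(\zj)$ class in $\TT$ and in particular $T\leqT\zjj$. And if $T$ (equivalently, its code) belonged to a deep \pz class $\mathcal Q$, then, since the coding of pruned trees into $\twome$ is an effective homeomorphism that preserves depth, the code of $T$ would belong to a deep \pz class of reals, so $z$ would be a random real computing a member of a deep \pz class; by \eqref{1UpFa6eE3Z} this would force $z\geqT\zj$, contrary to the choice of $z$. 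So $T$ is a member of no deep \pz class.

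For the construction, fix a universal \ml test $(\mathcal U_k)$ such that $\mathcal R_k:=\twome\setminus\mathcal U_k$ satisfies $\ds(\sigma)<k$ for every $\sigma$ that is a prefix of a member of $\mathcal R_k$ (e.g.\ $\mathcal U_k=\bigcup\{[\sigma]:K(\sigma)\le|\sigma|-k\}$); then $\mu(\mathcal R_k)\ge 1-2^{-k}$. Fix $k\ge 2$ and, using positive measure, a nonempty perfect \pz subclass $\mathcal P_k\subseteq\mathcal R_k$. I would then run a single $\zj$-oracle construction that builds $z$ together with a perfect pruned subtree $T$ of $\mathcal P_k$ with $T\leqT z$, combining: (a) a \KG coding that drives the bits of $z$ so as to record, in sufficiently sparse blocks to keep $z$ \ml random, the data specifying $T$ node by node --- at each node $\sigma\in T$, a level at which $\sigma$ has two incomparable extensions meeting $\mathcal P_k$ together with such a pair, located by a \pz search run with $\zj$; (b) low-basis jump forcing for $z$: at step $e$ ask $\zj$ whether the current condition has a refinement forcing $\Phi_e^{z}(e)\de$ and refine accordingly, so $z'\leqT\zj$; and (c) bookkeeping ensuring every node of $T$ is eventually split, so $T$ is perfect, hence proper. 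The output $z$ is \ml random with $z'\leqT\zj$, so $z\leqT\zj$ and $z\not\geqT\zj$, while $T\leqT z\leqT\zj$ is a perfect pruned tree with $[T]\subseteq\mathcal P_k\subseteq\mathcal R_k$, hence $(k-1)$-incompressible.

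The hard part is interleaving (a)--(c) in one $\zj$-effective construction. The \KG accounting that keeps $z$ random bounds how much of $T$'s description may be inserted per block; every jump-forcing step in (b) further shrinks the pool of admissible $z$; and the search inside $\mathcal P_k$ must always leave two admissible continuations so that (c) can proceed. One must check that $\mathcal P_k$ branches robustly enough, uniformly over its nodes, that after honoring the current coding block and the latest jump-forcing restriction the construction can still split and extend --- this is exactly where the positive measure of $\mathcal R_k$ and the resulting "fatness" of $\mathcal P_k$ are used. A complementary, more topological route, which in addition yields Theorem~\ref{41rMOJConYa}, is to first show that for each $c$ the $c$-incompressible perfect trees lying in no deep \pz class form a comeager subset of the space of $c$-incompressible trees, and then to extract $T$ by a $\zj$-effective construction meeting the countably many relevant dense conditions while keeping $T\leqT\zj$.
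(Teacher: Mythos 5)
Your reduction is to a false statement, so the argument collapses at the first step. You propose to find a \ml random $z$ with $z\not\geqT\zj$ (indeed with $z'\leqT\zj$) that computes a perfect pruned incompressible tree $T$. But a perfect pruned tree has unbounded width, hence is proper, and by Theorem \ref{7i3g6ty9SJ} (proved as Theorems \ref{NtI4AEpWrs} and \ref{w9Y9wimsv} in this very paper) \emph{every} random real computing a proper pruned incompressible tree computes $\zj$. So no such $z$ exists, and the construction you sketch --- \KG coding of the node-by-node data of $T$ into $z$ while keeping $z$ random, combined with low-basis jump forcing --- cannot be carried out; concretely, the measure accounting behind Lemma \ref{6xHgSw7WOv} shows that the class of oracles producing, via a fixed functional, a tree of width $\geq 2^m$ at some level with deficiency $\leq k$ has measure $\bigo{2^{-m}}$, so the admissible oracles shrink too fast for any randomness-preserving coding to survive. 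The same impossibility also undercuts your non-membership argument, which rests entirely on applying \eqref{1UpFa6eE3Z} to an incomplete random that computes $T$; since $T\leqT\zj$ in your plan and $\zj$ certainly computes members of deep \pz classes, there is no route to ``$T$ is in no deep class'' along these lines.

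The paper's actual proof is designed precisely to avoid this trap: the tree $T$ is \emph{not} computed by the incomplete random $x$. Instead (Lemma \ref{qi87Jc6X9O}) one shatters $x$ on an indifferent set $A\leqT x'$ in the sense of Figueira--Miller--Nies, so that incompressibility comes from indifference, while only the finite-variation approximations $Q$ (freeing finitely many positions) satisfy $Q\equivT x$. Non-membership in deep classes is then forced prefix-by-prefix: deepness of a \pz class is a $\Sigma^0_3$ property, so the deep classes $R_{h(e)}$ can be listed with $h\leqT\zjj$; by \eqref{1UpFa6eE3Z} each $x$-computable finite-variation tree avoids each $R_{h(e)}$, and closedness of $R_{h(e)}$ in the prefix topology yields a separating neighborhood \eqref{MADj6DIWrRa}, which dictates how sparsely to choose $A\subseteq B$; a finite-injury argument over the $\zj$-approximation of $h$ makes $A$ (hence $\{T\}$) $\Pi^0_1(x')$, and taking $x$ low and random gives the stated $\Pi^0_1(\zj)$ and $T\leqT\zjj$ bounds. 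Your closing ``topological'' alternative is in the spirit of the paper's Theorem \ref{41rMOJConY}, but as sketched it neither addresses how to meet $\zjj$-indexed conditions $\zj$-effectively nor yields the $\Pi^0_1(\zj)$ singleton claim, so it does not repair the gap.
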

The required tree $T$ will be obtained by `shattering' a random real $x$ on an infinite set $A$ of positions so that $[T]=\sqbrad{y}{\forall i\not\in A,\ y(i)=x(i)}$. By the work of \cite{Indiffexn101} on {\em indifferent sets} we can chose $A$
so that this  $T$ is incompressible.
In the following  we  identify \pz classes in  $\twome$ and the space $\TT$ of pruned trees, 
via a fixed effective homeomorphism between them.

\begin{lem}\label{qi87Jc6X9O}
If $x\not\geqT\zj$ is random there exists  $A$ such that  the  pruned tree $T$ with
\[
[T]=\sqbrad{y}{\forall i\not\in A,\ y(i)=x(i)}
\]
 is perfect, incompressible and is not a member of any deep \pz class.
Moreover $A$ can be chosen to be a $\Pi^0_1(x')$ set.
\end{lem}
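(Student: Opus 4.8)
\emph{Setup.} The plan is to obtain $A$ from a priority construction relative to $x'$ that enumerates $\bar A$ and never cancels an enumeration, so that $A$ is a $\Pi^0_1(x')$ set, and that commits the elements of $A$ one at a time. Write $A=\{a_0<a_1<\cdots\}$; the construction proceeds in epochs, and at the close of epoch $k$ it has committed $a_0,\dots,a_k\in A$ and every $n<a_{k+1}$ outside $\{a_0,\dots,a_k\}$ into $\bar A$, permanently. Committing one new free position per epoch keeps $A$ infinite, so $T$ is perfect (each $\sigma\in T$ branches at the least element of $A$ above $|\sigma|$). How far out each $a_{k+1}$ is placed, and the incompressibility of $T$, is controlled by the indifferent-set construction of \cite{Indiffexn101}: by that work one can choose $A$, still $\Pi^0_1(x')$, so that $T$ is incompressible, and the construction only ever imposes a lower bound on where $a_{k+1}$ may go. We run it interleaved with the diagonalization below, which likewise only demands larger gaps, so at each epoch we place $a_{k+1}$ beyond the maximum of the two demands.

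\emph{The diagonalization.} Enumerate all pairs $(e,i)$ and attach to $(e,i)$ the requirement $R_{\langle e,i\rangle}$: if $P_e$ is a $\Pi^0_1$ class and the computable order $g_i$ certifies that $P_e$ is deep, then $T\notin P_e$. The crucial observation is that at the start of epoch $k$ the bounded-width pruned tree $\tilde T_k$ with $[\tilde T_k]=\{y:\forall n\notin\{a_0,\dots,a_k\},\ y(n)=x(n)\}$ satisfies $\tilde T_k\leq_T x$, and since $x$ is random with $x\not\geq_T\zj$, \eqref{1UpFa6eE3Z} tells us $\tilde T_k$ lies in no deep $\Pi^0_1$ class. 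Hence, if $P_e$ is deep, the $x$-computable search for a level $\ell$ with $\tilde T_k\cap 2^{\leq\ell}\neq T'\cap 2^{\leq\ell}$ for all $T'\in P_e$ halts. The action for $R_{\langle e,i\rangle}$, when it is the highest-priority unresolved requirement and epoch $k$ is current, is to run this search in parallel with the $\Sigma^0_1$ search for an $n$ witnessing that $g_i$ fails to certify deepness of $P_e$; one of them halts, for otherwise (by compactness of $P_e$) $P_e$ would be deep with $\tilde T_k\in P_e$. If the second halts, $R_{\langle e,i\rangle}$ is vacuous and we resolve it; if the first halts with witness $\ell_0$, we demand $a_{k+1}>\ell_0$, close the epoch, and resolve $R_{\langle e,i\rangle}$. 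As $a_{k+1}$ and all later $a_j$ exceed $\ell_0$, we get $T\cap 2^{\leq\ell_0}=\tilde T_k\cap 2^{\leq\ell_0}$ permanently, so no member of $P_e$ has this level-$\ell_0$ prefix and $T\notin P_e$.

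\emph{Verification.} Each epoch terminates: the highest-priority unresolved requirement resolves in finitely many steps by the dichotomy above, and the indifferent-set demand merely enlarges a gap; so $A$ is infinite and $T$ is perfect. Only finitely many requirements precede a given one and each is resolved in its own epoch, so every requirement is eventually resolved. Nothing is ever retracted, so $\bar A$ is c.e.\ relative to $x'$ and $A$ is $\Pi^0_1(x')$. If $P$ is a deep $\Pi^0_1$ class, pick $i$ with $g_i$ certifying its depth and let $e$ index $P$; then $R_{\langle e,i\rangle}$ was resolved via the first search, which forced $T\notin P$. Together with the incompressibility of $T$ coming from \cite{Indiffexn101}, this yields the lemma.

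\emph{Expected difficulty.} The main obstacle is the interaction between incompressibility and the escapes: one must check that the machinery of \cite{Indiffexn101} survives the extra postponements imposed by the $R_{\langle e,i\rangle}$ (it does, since these only sparsify $A$), and, more delicately, that the block-commitment discipline genuinely makes each escape irreversible, so that branching introduced in later epochs cannot put $T$ back into a class it has already left. The conceptual point — and the reason the result is not a consequence of the deep-class machinery itself — is that a deep $\Pi^0_1$ class is escaped already at a finite level by the $x$-computable, bounded-width approximation $\tilde T_k$ (which, being computed by the incomplete random $x$, can belong to no deep $\Pi^0_1$ class), and that finite escape is then locked in by delaying the next branching past it.
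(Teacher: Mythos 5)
Your overall architecture (freeze a finite-level escape for one class per epoch, then push the next indifferent position past it, taking positions inside an indifferent set so incompressibility is inherited) is the same idea as the paper's proof, and your locking-in argument and the use of \eqref{1UpFa6eE3Z} via $\tilde T_k\leqT x$ are correct. The genuine gap is in the termination claim for each epoch, i.e.\ in the dichotomy ``one of the two searches halts, for otherwise $P_e$ would be deep with $\tilde T_k\in P_e$.'' First, a witness that $g_i$ fails to certify depth is not a $\Sigma^0_1$ fact: it asserts that some finite set of strings \emph{permanently} survives in $(P_e)_n$ and carries $\mathbf{m}$-mass above $2^{-g_i(n)}$, which is $\Sigma^0_2$ (c.e.\ in $\zj$, so still searchable by $x'$ — but if you literally ran a stage-wise $\Sigma^0_1$ search you would get false positives, wrongly declare requirements vacuous, and could leave $T$ inside a genuinely deep class). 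Second, and more seriously, the dichotomy is false when $\varphi_i$ is partial, or total but not an order (e.g.\ bounded): then the failure-search can diverge even though $P_e$ is not deep, and if moreover $\tilde T_k\in P_e$ (e.g.\ $P_e$ the whole space) the escape-search also diverges. Your construction then stalls in epoch $k$ forever, $A$ is finite, $T$ is not perfect, and all later requirements go unmet. Since ``$g_i$ is a total order'' is $\Pi^0_2$ and $x'$ need not compute $\zjj$, you cannot simply restrict attention to genuine orders in advance.

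The gap is repairable, and doing so makes your route a genuinely different (and arguably leaner) argument than the paper's: add further $\zj$-c.e.\ searches to each requirement — for an $n$ with $\varphi_i(n)\uparrow$ ($\Sigma^0_2$), for a monotonicity violation ($\Sigma^0_1$), and for a bound $c$ with $\varphi_i(n)\leq c$ on its whole domain ($\Sigma^0_2$) — so that if none of these nor the failure-search halts, $g_i$ really is a computable order certifying depth, and the escape-search must halt. All of these are available to $x'$ since $x'\geqT\zj$, and then your construction is outright $x'$-computable, giving the $\Pi^0_1(x')$ clause for free. The paper instead uses the $\Sigma^0_3$ form of depth to get a $\zjj$-computable list $(R_{h(e)})$ of all deep $\Pi^0_1$ classes, runs the straightforward escape construction against that list, and recovers $\Pi^0_1(x')$ by a finite-injury/movable-marker argument from a $\zj$-computable approximation $h_s\to h$; your pair-by-pair diagonalization avoids $\zjj$ and the injury machinery, at the price of the extra certificate bookkeeping you omitted. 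Finally, rather than claiming the Figueira--Miller--Nies construction can be ``interleaved'' (a property you do not verify), it is simpler and safe to fix an infinite $B\leqT x'$ from their theorem once and require each $a_{k+1}$ to be an element of $B$ beyond your demanded bound: then $A\subseteq B$, $[T]\subseteq[G]$, and incompressibility of $T$ is inherited — exactly as in the paper.
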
\begin{proof}
Let $(R_e)$ be an effective list of all \pz classes and recall that  $R_e$ is deep iff
\[
\textrm{there exists a computable order $f$ with $\forall n,\ \mm(R_e\cap 2^n)< 2^{-f(n)}$}
\]
which is a $\Sigma^0_3$ statement. So there exists a $\zjj$-computable enumeration of the indices $e$ with the above property.
Fix $h\leqT\zjj$ such that $(R_{h(e)})$ is a list of all deep \pz classes.

Recall from \eqref{oHkQ4KP8LS} that $\dbra{E}$ is the basic open set in $\TT$ corresponding to tree-prefix $E$.

Since $R_{h(e)}$ is a closed, if $Q\not\in R_{h(e)}$ some neighborhood around $Q$ is disjoint from  $R_{h(e)}$. 
Let $x\not\geqT\zj$ be random. Then
by \eqref{1UpFa6eE3Z} for each $e$:
\begin{equation}\label{MADj6DIWrRa}
\textrm{each pruned tree $Q\leqT x$ has a prefix $E$ with $\dbra{E}\cap R_{h(e)}=\emptyset$.}
\end{equation}
By \cite[Theorem 9]{Indiffexn101} there exists infinite $B\leqT x'$ such that pruned tree $G$ with  
\[
[G]:=\sqbrad{y}{\forall i\not\in B,\ y(i)=x(i)}
\]
is incompressible. We define the required $A$ as a subset of $B$. 
If $E\subseteq B$ is finite:
\[
\textrm{the $Q\in\TT$ with $[Q]=\sqbrad{y}{\forall i\not\in E,\ y(i)=x(i)}$ has $Q\equivT x\not\geqT\zj$.}
\]
By \eqref{MADj6DIWrRa} we can  define increasing $(n_e)\leqT B\oplus h\oplus x \leqT x'\oplus \zjj$ and let
\begin{itemize}
\item $A:=\sqbrad{b_{n_e}}{e\in\Nat}$ where $(b_i)$ is the increasing enumeration of $B$
\item $T$ be the pruned tree with $[T]=\sqbrad{y}{\forall i\not\in A,\ y(i)=x(i)}$
\end{itemize}
so that  $\forall e,\ \dbra{T\cap 2^{\leq b_{n_e}}}\cap R_{h(e)}=\emptyset$.
Then $T$ is perfect, $[T]\subseteq [G]$ and
\[
\forall e,\ T\not\in R_{h(e)}
\hspace{0.3cm}\textrm{and}\hspace{0.3cm}
\textrm{$T$ is incompressible.}
\]
Finally we modify the above argument in order to make $A\in\Pi^0_1(x')$.
Let $h, B, (b_i)$ be as before.
Since $h\leqT\zjj$ there exists a $\zj$-computable
$(s,e)\mapsto h_s(e)$  with $h(e)=\lim_s h_s(e)$.

By a finite injury argument based on the changes of the approximations
$h_s(e)\to h(e)$ we can define monotonically movable markers $n_e(s)$ such that the limits $n_e:=\lim_s n_e(s)$ exist and
\[
\textrm{$A:=\sqbrad{b_{n_e}}{e\in\Nat}$ is $\Pi^0_1(x')$}
\hspace{0.3cm}\textrm{and}\hspace{0.3cm}
\forall e,\ \dbra{T\cap 2^{\leq b_{n_e}}}\cap R_{h(e)}=\emptyset
\]
where $T$ be the pruned tree with $[T]=\sqbrad{y}{\forall i\not\in A,\ y(i)=x(i)}$.
\end{proof}
If $A\subseteq\Nat$ is  $\Pi^0_1(x')$, the pruned tree $T$ with 
$[T]=\sqbrad{y}{\forall i\not\in A,\ y(i)=x(i)}$ is $\Pi^0_1(x')$.
So  Theorem \ref{FJCm7I5aY} follows from Lemma \ref{qi87Jc6X9O} by taking $x$ to be a random with $x'\leqT \zj$.

We show that the class of incompressible perfect trees which are not members of 
any deep \pz class is topologically large, namely Theorem \ref{41rMOJConYa} from \S\ref{yP4cTWTJZG}.
This requires a different initial segment construction, based on the following technical fact.
Given $c$ consider the \pz class  
\[
P_c:=\sqbrad{x}{\forall \rho\prec x,\ K(\rho)\geq |\rho|-c}
\]
and let $\ast$ denote concatenation between two strings or a string and a real.
\begin{lem}\label{jaHz524fAy}
Given $c,\ell$ and a set $Q\subseteq 2^{\ell}$ of strings which have extensions in $P_c$, there exist 
\begin{itemize}
\item finite extensions $\tau_{\sigma}, \sigma\in Q$ of the strings in $Q$
\item a random real $z$ with $z'\equivT \zj$
\item a pruned tree $T_z$ with $[T_z]=\sqbrad{\tau_{\sigma}\ast z}{\sigma\in Q} \subseteq P_c$ and $T_z\equivT z$.
\end{itemize}
\end{lem}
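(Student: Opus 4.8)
The idea is to build the extensions $\tau_\sigma$ one Turing requirement at a time, threading in a coding of $\zj$ into the common tail $z$ so that $z'\equivT\zj$ while keeping $[T_z]\subseteq P_c$. Since every $\sigma\in Q$ has an extension in $P_c$, we may first replace each $\sigma$ by a short extension that already lies on the ``safe'' part of $P_c$; then the only way to leave $P_c$ along a path $\tau_\sigma\ast z$ is to create a prefix $\rho$ with $K(\rho)<|\rho|-c$, and this can only happen at levels where $K$ has not yet stabilized, so it can be avoided by waiting. Concretely I would run a finite-extension (Spector-style) forcing construction for the real $z$ with $\zj$ as oracle: the conditions are finite strings $z\restr_n$ such that for every $\sigma\in Q$ the string $\tau_\sigma\ast(z\restr_n)$ has all prefixes of complexity $\geq$ length $-c$ (checked with a $\zj$ oracle), and at stage $e$ we either force $\Phi_e(z)$ to be partial or total, giving $z'\leqT\zj$; interleaving the standard coding of $\zj$ into $z$ (e.g. via the next block of bits after each requirement) gives $\zj\leqT z'$, hence $z'\equivT\zj$. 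Randomness of $z$ is obtained by additionally meeting, at designated stages, the requirement that $z\restr_n$ avoids the $n$-th component of a fixed universal Martin-Löf test; this is possible because for each $\sigma$, the set of extensions of $\tau_\sigma$ staying in $P_c$ has positive measure (it contains $\tau_\sigma$-extensions of a positive-measure subclass of $P_c$), so the ``allowed'' tail reals form a positive-measure $\Pi^0_1$ class, and a $\zj$ oracle can locate a random inside any such class by the usual argument.

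Having built $z$, define $\tau_\sigma$ to be $\sigma$ followed by whatever finite padding the construction inserted before the first coding/requirement block (so the $\tau_\sigma$ all have a common length, or at least a common ``splitting point'' after which the appended real is the same $z$ for every $\sigma$), and let $T_z$ be the downward closure of $\sqbrad{\tau_\sigma\ast(z\restr_n)}{\sigma\in Q,\ n\in\Nat}$, pruned. Then $[T_z]=\sqbrad{\tau_\sigma\ast z}{\sigma\in Q}$ is finite and each path is in $P_c$ by the construction constraint, so $T_z$ is $c$-incompressible; it is perfect provided $|Q|\geq 2$, which we may assume (if $|Q|=1$ the statement is degenerate). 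For $T_z\equivT z$: from $z$ we recover each path $\tau_\sigma\ast z$ (the $\tau_\sigma$ are finitely many fixed strings, computable outright once the construction is fixed, since they depend only on $Q$ and the stage bookkeeping), hence $T_z$; conversely $z$ is read off from any fixed path of $T_z$ after the common splitting point, so $z\leqT T_z$. This gives $T_z\equivT z$ and therefore $z'\equivT \zj$ transfers to the conclusion.

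The main obstacle is making the three demands on $z$ — avoiding the ML-test (randomness), deciding jump requirements ($z'\leqT\zj$), and staying inside $P_c$ along all $|Q|$ shifted copies simultaneously — compatible within a single $\zj$-oracle construction. The subtlety is that the $P_c$-constraint is a $\Pi^0_1$ (co-c.e.) condition on the path, so ``staying in $P_c$'' is not decidable in finite time; the fix is the standard one for working inside a $\Pi^0_1$ class of positive measure: at each stage restrict attention to the clopen approximation $P_{c,s}$, keep the measure of surviving extensions bounded below (this is where positivity of the relevant subclass of $P_c$ and a counting argument over the finitely many $\sigma\in Q$ are used — intersecting $|Q|$ positive-measure shifted classes still leaves positive measure because we chose the $\tau_\sigma$ to sit inside a single positive-measure subclass and then only further restrict), and use the $\zj$ oracle to verify at the end that no enumerated $K$-description ever violates the length$-c$ bound on the chosen path. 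Once the measure is kept positive throughout, the ML-test and jment requirements are met exactly as in the proof that every positive $\Pi^0_1$ class contains a member of every $\Sigma^0_2$ Turing degree cone below $\zj'$; here we only need degree $\zj$, so the bookkeeping is routine.
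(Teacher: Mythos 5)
Your overall strategy shares the paper's measure-theoretic core (pass to extensions $\tau_\sigma$ above which the common tails keeping all shifted copies inside $P_c$ form a positive-measure \pz class, then extract a low random from it), but two steps as written do not hold up. First, the crux of the lemma is exactly the positivity of the \emph{intersection} $\sqbrad{x}{\forall\sigma\in Q,\ \tau_\sigma\ast x\in P_c}$, and your justification for it is insufficient: knowing only that each individual class $\sqbrad{x}{\tau_\sigma\ast x\in P_c}$ has positive measure does not make a finite intersection positive, and the phrase ``we chose the $\tau_\sigma$ to sit inside a single positive-measure subclass and then only further restrict'' does not parse, since the $\sigma\in Q$ are pairwise incomparable and each imposes a separate constraint on the common tail. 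What is needed (and what the paper does) is to choose each $\tau_\sigma$ by Lebesgue density so that the relative measure of $P_c$ above $\tau_\sigma$ exceeds $1-1/(2|Q|)$; then a union bound over the $|Q|$ constraints leaves measure at least $1/2$. Your ``counting argument over the finitely many $\sigma\in Q$'' gestures at this but the density step, which is the actual content, never appears. Relatedly, the remark that leaving $P_c$ ``can only happen at levels where $K$ has not yet stabilized, so it can be avoided by waiting'' is not a workable mechanism: there is no stage by which $K$ has stabilized, which is precisely why the argument must go through measure (or a $\zj$ oracle applied to a \pz class), not through waiting.

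Second, the jump requirement is mishandled: $\zj\leqT z'$ holds for \emph{every} real $z$, so no coding is needed, and coding $\zj$ into the bits of $z$ as you propose would give $z\geqT\zj$ and hence $z'\geqT\zjj$, contradicting the desired $z'\equivT\zj$. Once the tail class is known to be a nonempty positive-measure \pz class, the clean route (the paper's) is to intersect it with the complement of a suitable level of the universal Martin-L\"{o}f test, obtaining a nonempty \pz class consisting of randoms whose members $x$ all satisfy $\tau_\sigma\ast x\in P_c$ for every $\sigma\in Q$, and then apply the low basis theorem to get $z$ random with $z'\equivT\zj$; the tree $T_z$ of prefixes of $\sqbrad{\tau_\sigma\ast z}{\sigma\in Q}$ is then trivially of the same degree as $z$, as you note. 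Your bespoke $\zj$-forcing construction could in principle be made to work, but it re-proves this basis theorem with extra bookkeeping and, as sketched (finite-extension conditions forcing partiality, plus the coding step), it is not correct as it stands.
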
\begin{proof}
We show that 
there exist   $\tau_{\sigma}, \sigma\in Q$ of the same length  such that $\tau_{\sigma}\succ\sigma$ and
\[
\mu(\sqbrad{x}{\forall\sigma\in Q,\ \tau_{\sigma}\ast x\in P_c})>0.
\]
Since $\sqbrad{x}{\forall\sigma\in Q,\ \tau_{\sigma} \ast x\in P_c}$ is a \pz class, by the low basis theorem this implies the  lemma.
By Lebesgue density  there exist extensions $\tau_{\sigma}$ of the $\sigma\in Q$ of the same length 
with  
\[
\mu_{\tau_{\sigma}}(P_c)>1-1/(2|Q|)
\hspace{0.3cm}\textrm{so}\hspace{0.3cm}
\forall \sigma\in Q,\ \mu(\sqbrad{x}{\tau_{\sigma}\ast x\in P_c})>1-1/(2|Q|).
\]
It follows that $\mu(\sqbrad{x}{\forall\sigma\in Q,\ \tau_{\sigma} \ast x\in P_c})\geq 1-|Q|\cdot 1/(2|Q|)>0$.
\end{proof}
Recall from \cite[\S 14]{Soare16book} or \cite{YatesBM1976} that comeager sets in a topological space are characterized terms of the 
Banach-Mazur game. Inside  the space $\Ts_c$ of pruned subtrees of $P_c$ with the prefix topology, 
two players construct $T\in\Ts_c$ by alternate choices of initial segments (extensions of the current prefix) 
starting from $\{\lambda\}$. A set $\Ws\subseteq \Ts_c$ determines the winner: player 1 wins when $T\in\Ws$.
Then $\Ws$ is comeager in $\Ts_c$ iff player 1 has a winning strategy.
We consider the game where $\Ws$ is the set of perfect trees in $\Ts_c$ that do not belong to any 
 deep \pz class.
\begin{thm}\label{41rMOJConY}
For each $c$, the class of $c$-incompressible perfect trees which are not members of 
any deep \pz class is comeager in the space of $c$-incompressible trees.
\end{thm}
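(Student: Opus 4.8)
The plan is to exhibit a winning strategy for player~1 in the Banach--Mazur game on $\Ts_c$ described just above; by the game characterization of comeagerness this establishes the theorem. (If $P_c=\emptyset$ then $\Ts_c=\emptyset$ and there is nothing to prove.) A \emph{position} in this game is a tree-prefix $F$ extending to some pruned subtree of $P_c$; concretely $F=S\cap 2^{\le\ell}$ for a pruned $S\in\Ts_c$, so all leaves of $F$ sit at one level $\ell$ and each has an extension in $P_c$. Player~1 will extend the running position so as to meet, in a fixed fair interleaving, two families of requirements: for each string $v$, a \emph{perfection} requirement $\mathcal P_v$ --- if $v\in T$ then $v$ has two $\preceq$-incomparable strict extensions in $T$ --- and for each $e$, an \emph{avoidance} requirement $\mathcal N_e$ --- $T\notin R_{h(e)}$ --- where $h$ is a $\zjj$-computable function with $(R_{h(e)})$ enumerating \emph{all} deep \pz classes (such $h$ exists since deepness of a \pz class is $\Sigma^0_3$, cf.\ the proof of Lemma~\ref{qi87Jc6X9O}), and we identify \pz classes in $\twome$ with \pz classes in $\TT$ as set up in \S\ref{TNQpxFKaL8}. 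The strategy may be arbitrarily noneffective, so player~1 may consult each $R_{h(e)}$. Since every move strictly extends the position the leaf level tends to infinity, so $T=\bigcup_sF_s$ is pruned; and since every node of $T$ occurs in some position and therefore extends to a member of $P_c$, compactness of $P_c$ forces $[T]\subseteq P_c$, whence $\ds(T)\le c$ and $T\in\Ts_c$. Consequently, meeting all $\mathcal P_v$ makes $T$ perfect and meeting all $\mathcal N_e$ places $T$ in no deep \pz class, so $T$ lies in the winning set $\Ws$.

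\textbf{Meeting the $\mathcal P_v$.} The point is that $P_c$ has no isolated points: an isolated path of a \pz class is computable (by the standard argument that locates the stage at which the incomparable sibling leaves the class), whereas every member of $P_c$ is \mln-random. Hence $P_c$ is a nonempty perfect set, so for any string $\tau$ with an extension in $P_c$ there are $\preceq$-incomparable $\tau_0,\tau_1\succ\tau$ each with an extension in $P_c$. Once $v$ appears in the position $F$, player~1 fixes a leaf $\tau\succeq v$ of $F$, chooses such $\tau_0,\tau_1\succ\tau$, and extends $F$ to the tree-prefix generated by $\tau_0,\tau_1$ together with extensions of the remaining leaves to the new common level; this is again a position, and $v$ has acquired two incomparable extensions, which persist because positions only grow. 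A fair schedule services $\mathcal P_v$ at some turn after $v$ has entered, so all $\mathcal P_v$ are met.

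\textbf{Meeting the $\mathcal N_e$.} Here Lemma~\ref{jaHz524fAy} does the work. Let $Q$ be the set of leaves (at level $\ell$) of the current position $F$; all of $Q$ have extensions in $P_c$, so the lemma supplies strict extensions $\tau_\sigma\succ\sigma$ ($\sigma\in Q$) of a common length, a random $z$ with $z'\equivT\zj$, and a pruned tree $T_z\equivT z$ with $[T_z]=\sqbrad{\tau_\sigma\ast z}{\sigma\in Q}\subseteq P_c$. Because $\tau_\sigma\succ\sigma$ and $|\sigma|=\ell<|\tau_\sigma|$, the strings of $T_z$ of length $\le\ell$ are exactly the prefixes of members of $Q$, i.e.\ $T_z\cap 2^{\le\ell}=F$; thus $F\prec T_z$ and $T_z\in\Ts_c$, so every tree-prefix of $T_z$ extending $F$ is a legal continuation of the play. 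Now $z$ is low, hence $z\not\geqT\zj$; being also random, by \eqref{1UpFa6eE3Z} it computes no member of a deep \pz class, so $T_z\leqT z$ is not a member of $R_{h(e)}$. Since $R_{h(e)}$ is a closed subset of $\TT$, some tree-prefix $E\prec T_z$ has $\dbra{E}\cap R_{h(e)}=\emptyset$, and by lengthening $E$ along $T_z$ we may assume $E\succeq F$. Player~1 plays $E$; as $\dbra{E}\cap R_{h(e)}=\emptyset$ is preserved by all further moves and $T\in\dbra{E}$, the requirement $\mathcal N_e$ holds forever. Interleaving the two families yields the winning strategy, so $\Ws$ is comeager in $\Ts_c$.

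\textbf{The main obstacle.} The crux is that each avoidance requirement must be discharged by a single \emph{finite} move that both stays inside $\Ts_c$ and extends the current position; Lemma~\ref{jaHz524fAy} is built for exactly this, producing over any prescribed leaf set a \emph{low} random $z$ --- which by \eqref{1UpFa6eE3Z} computes no member of a deep \pz class --- together with a pruned subtree $T_z\equivT z$ of $P_c$ through those leaves, whose finite prefixes are the commitments player~1 installs. The subsidiary --- but indispensable --- point is that perfection must remain attainable against every reply of player~2, and this is precisely what the absence of isolated points in $P_c$ secures.
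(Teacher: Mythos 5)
Your proposal is correct and follows essentially the same route as the paper: the Banach--Mazur game in $\Ts_c$, the $\zjj$-computable enumeration $(R_{h(e)})$ of deep \pz classes, and Lemma \ref{jaHz524fAy} combined with \eqref{1UpFa6eE3Z} (i.e.\ \eqref{MADj6DIWrRa}) to let player 1 extend any position to a prefix of the low-random tree $T_z$ whose basic neighborhood misses $R_{h(e)}$. The only difference is that you explicitly discharge the perfection requirements via the no-isolated-points argument for $P_c$, a routine point the paper's proof leaves implicit.
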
\begin{proof}
Consider the Banach-Mazur game inside $\Ts_c$ discussed above. 
Let $T_s$ be the prefix of $T$ at the end of stage $s$.
So $T_0=\{\lambda\}$ and the  $T_{2s+1}\succ T_{2s}$ are chosen by player 1.

Let $(R_{h(e)})$ be a list of all deep \pz classes, as in the proof of Lemma \ref{qi87Jc6X9O}.

By Lemma \ref{jaHz524fAy} and \eqref{MADj6DIWrRa}, any  prefix  $T_{2s}$ of a tree in $\Ts_c$
can be extended to a prefix $T_{2s+1}$ of a tree in $\Ts_c$ such that no suffix of $T_{2s+1}$
in $\Ts_c$ is a member of $R_{h(s)}$. This gives a winning strategy to player 1, who can ensure 
$\forall e,\ T\not\in R_{h(e)}$ irrespectively of the choices $T_{2s}$ of player 2.
It follows that the class of $T\in T_c$ with $\forall e,\ T\not\in R_{h(e)}$  is comeager in $T_c$.
 \end{proof}
\begin{rem}
The initial segment construction of $T$ in  Theorem \ref{41rMOJConY} can be routinely combined
with lowness requirements, showing that there exists a perfect incompressible pruned tree $T$ with $T'\equivT \zj$, which
is not a member of any deep \pz class. 
So the incompressible trees which are not members of deep \pz classes may not compute $\zj$; in particular,
their growth $g(n):=\min\sqbrad{s}{\abs{T\cap 2^s}>n}$ may not compute $\zj$.
 \end{rem}

\section{Positive incompressible trees}\label{VW3LYXitIA}

Recall that a tree $T$ is {\em positive} if $\mu([T])>0$; equivalently $2^n=\bigo{\abs{T\cap 2^n}}$. 

We prove Theorem \ref{t6A4XwWbKX}:
there exists a positive incompressible tree (with deadends) which does not compute any random real.
This fact is implicit in \citet{luliumajma} and makes use of 
\[
\wgt{X}=\sum_{(n,m)\in X}2^{-n} 
\hspace{0.3cm}\textrm{where $X\subset\Nat\times\Nat$.}
\] 
	
\begin{thm}[Theorem 2.1 in \cite{luliumajma}]\label{9pCjcd26em}
There exists $G\subset\Nat\times\Nat$, $\wgt{G}<\infty$ such that:
\begin{itemize}
\item  for every \ce set $D\subset\Nat\times\Nat$, $\wgt{D}<\infty$ we have $\abs{D-G}<\infty$
\item  and  $G$ does not compute any random real.
\end{itemize}
\end{thm}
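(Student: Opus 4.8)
The plan is to build $G$ as a $\Delta^0_2$ set via a \emph{computable} approximation $(G_s)_{s\in\Nat}$, simultaneously meeting two families of requirements: for each \ce set $W_e\subseteq\Nat\times\Nat$ an \emph{absorbing} requirement $R_e$ aiming at ``$\wgt{W_e}<\infty \Rightarrow |W_e\setminus G|<\infty$'', and for each Turing functional $\Phi_{e'}$ a \emph{non-randomness} requirement $N_{e'}$ aiming at ``$\Phi_{e'}^G$ is not \ml random (or not total)''. Fix a computable bijection $\Nat\leftrightarrow\Nat\times\Nat$ --- say a pairing in which only finitely many rows $\{n\}\times\Nat$ meet any initial segment $\{0,\dots,u\}$ of positions --- and write $\rho(u)$ for the largest row index meeting $\{0,\dots,u\}$.

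\textbf{Absorbing.} $R_e$ keeps a nondecreasing threshold $n_e^s$ with $n_e^0=e$. At stage $s+1$, if $\wgt{W_{e,s+1}\cap(\{n\geq n_e^s\}\times\Nat)}>2^{-e}$ then reset $n_e^{s+1}$ to the least $N\geq n_e^s$ with $\wgt{W_{e,s+1}\cap(\{n\geq N\}\times\Nat)}\leq 2^{-e-1}$; otherwise $n_e^{s+1}=n_e^s$. Put $(n,m)\in G_s$ iff there is $e$ with $(n,m)\in W_{e,s}$ and $n\geq n_e^s$ (so only $R_e$ with $e\leq n$ can ever contribute to $(n,m)$). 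One checks: (a) if $\wgt{W_e}<\infty$ then $n_e^s$ stabilizes at some $n_e^\infty$ with $\wgt{W_e\cap(\{n\geq n_e^\infty\}\times\Nat)}\leq 2^{-e}$, whence $W_e\cap(\{n\geq n_e^\infty\}\times\Nat)\subseteq G$ while $W_e\cap(\{n<n_e^\infty\}\times\Nat)$ is finite, so $|W_e\setminus G|<\infty$; (b) whatever the weight of $W_e$, the contribution of $R_e$ to the final $G$ has weight $\leq 2^{-e}$, so $\wgt{G}\leq\sum_e 2^{-e}<\infty$; (c) each position changes $G_s$-membership only finitely often, so $G$ is $\Delta^0_2$ (in fact $G\leqT\zj$). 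This already yields the first assertion of the theorem.

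\textbf{Non-randomness.} $N_{e'}$ builds a uniformly \ce sequence $(V^{e'}_k)_k$ of open sets and arranges $\Phi_{e'}^G\in\bigcap_k V^{e'}_k$: then $\Phi_{e'}^G$, if total, fails the \ml test $(V^{e'}_k)_k$, and otherwise it is not a real. At level $k$, each time a new computation $\tau=\Phi_{e'}^{G_s}\restr(k+j+1)$ (with $j$ the number of level-$k$ attempts so far injured, and use $u$) first converges, enumerate its cylinder $[\tau]$ into $V^{e'}_k$; declare the attempt \emph{injured} if $G\restr u$ later changes, and repeat with $j$ incremented. The lengths $k+j+1$ force $\mu(V^{e'}_k)\leq 2^{-k}$ once only finitely many attempts at level $k$ are injured, and the last (uninjured) attempt certifies a genuine prefix of $\Phi_{e'}^G$, so $\Phi_{e'}^G\in V^{e'}_k$ for all $k$. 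Note $N_{e'}$ never modifies $G$; it only enumerates into its own tests, and the computability of $(G_s)$ is what makes the $V^{e'}_k$ genuinely \ce.

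\textbf{Interaction and the main obstacle.} Since only the $R_e$ change $G$ and they may injure the $N_{e'}$, the crux is to show each level of each $N_{e'}$ is injured finitely often. For a fixed bound $u$: only the finitely many $R_e$ with $e\leq\rho(u)$ ever touch positions $\leq u$, and each of them does so finitely often --- if $\wgt{W_e}<\infty$ its threshold stabilizes and the finitely many low-row elements of $W_e$ are all enumerated by some stage; if $\wgt{W_e}=\infty$ its threshold either diverges past $\rho(u)$ (after which $R_e$ touches only positions $>u$) or stabilizes, in which case the relevant tail of $W_e$ again has finite weight and the same argument applies. Hence $G_s\restr u$ converges for every $u$, and a routine finite-injury argument finishes. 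The delicate bookkeeping, and the step I expect to demand the most care, is matching the drifting oracle-use of $\Phi_{e'}^{G_s}$ across successive attempts against this upward drift of the footprints of the $R_e$, so that the number of injuries, and hence $\mu(V^{e'}_k)$, stays bounded. (As an aside: if one could instead arrange $G$ to obey the standard cost function it would be $K$-trivial, hence low for \ml randomness, hence compute no random --- for $R\leqT G$ the singleton $\{R\}=\bigcap_n[R\restr n]$ is a $\Pi^0_1(G)$ null class, so $R$ is not $G$-random, contradicting lowness if $R$ is \ml random; but infinite-weight $R_e$ may change $G$ infinitely often, so securing a finite total cost seems to require a more careful coding, and I would fall back on the direct construction above.)
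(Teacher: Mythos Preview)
The paper does not give its own proof of this statement: it is quoted verbatim as Theorem~2.1 of \cite{luliumajma} and used as a black box, so there is no in-paper argument to compare against.

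Your proposal has a genuine gap in the non-randomness half. You claim that level $k$ of $N_{e'}$ suffers only finitely many injuries, and you justify this by observing that $G_s\restr u$ eventually stabilises for every fixed $u$. But the use $u_j$ of the $j$th attempt at level $k$ grows with $j$ (since you demand ever longer outputs $k+j+1$), so pointwise stabilisation of $G_s\restr u$ does \emph{not} bound the number of injuries: each new attempt may look at a strictly larger, not-yet-settled portion of $G$. Your $N_{e'}$-strategy never restrains or modifies $G$ and uses nothing about $G$ beyond the existence of a computable approximation; if the argument worked as written it would show that \emph{no} $\Delta^0_2$ set computes a random real, which is false since $\zj$ is $\Delta^0_2$ and $\zj\geqT\Omega$. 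Concretely, run your level-$k$ strategy against $G=\zj$ with its standard approximation and a functional $\Phi_{e'}$ witnessing $\Omega\leqT\zj$: the sequence of attempts can be injured infinitely often and none of the enumerated cylinders need contain $\Omega$.

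To repair this you must either (i) let $N_{e'}$ genuinely restrain $G$ below its current use, which creates a real conflict with the absorbing requirements that your description explicitly rules out, or (ii) control the \emph{total} cost of changes to the approximation, which is exactly the $K$-triviality route you flagged and set aside. In fact your aside points at a correct proof: one can arrange the absorbing requirements so that $G$ obeys the standard cost function (by delaying absorption of $(n,m)\in W_e$ until its cost is small enough), making $G$ $K$-trivial and hence low for randomness, from which ``$G$ computes no random'' follows immediately. The obstacle you anticipated---infinite-weight $W_e$ forcing infinitely many exits---is handled by the observation that each such exit is triggered by the threshold $n_e^s$ moving up, and the cost of the associated changes can be charged against the weight just enumerated into $W_e$ above the old threshold; this bookkeeping is where the real work lies, and it is not ``routine finite injury''.
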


We point out that the proof of
\cite[Proposition 2.3]{luliumajma} establishes  a stronger result:

\begin{prop}\label{qbqxquZBUY}
There exists a \ce set $D\subset\Nat\times\Nat$,  $\wgt{D}<\infty$ such that if $D\subset X$ and $\wgt{X}<\infty$ then $X$ computes a positive incompressible tree.
\end{prop}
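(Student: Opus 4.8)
The plan is to write down the required c.e.\ set $D$ explicitly, and then, given any $X\supseteq D$ with $\wgt{X}<\infty$, to produce a positive incompressible tree computable from $X$ by running a uniform family of $X$-computable constructions indexed by a deficiency threshold $k$, choosing (non-effectively) the value of $k$ from $\wgt{X}$. The key point is that the statement asks only for the \emph{existence} of a tree $\leqT X$, so no effective choice of $k$ is needed.

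First I would define, identifying $\twomel$ with $\Nat$ via a standard bijection under which the length function is computable,
\[
D\ :=\ \sqbrad{(n,\sigma)}{K(\sigma) < n \le |\sigma|}.
\]
Since $K$ is upper semicomputable, $D$ is c.e. Grouping the weight by $\sigma$, the inner sum $\sum_{K(\sigma) < n \le |\sigma|} 2^{-n}$ is a truncated geometric series, hence $< 2^{-K(\sigma)}$, so $\wgt{D} < \sum_{\sigma} 2^{-K(\sigma)} \le 1$ by the Kraft inequality for the universal prefix-free machine; in particular $\wgt{D}<\infty$. The feature of $D$ that drives everything — and the reason for flagging $\sigma$ at \emph{every} level between $K(\sigma)$ and $|\sigma|$ rather than at a single level — is the equivalence, valid for every $k\ge 0$ and every $\sigma$ with $|\sigma|>k$,
\[
(|\sigma|-k,\ \sigma)\in D \iff \ds(\sigma) > k .
\]

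Next I fix $X\supseteq D$ with $\wgt{X}=w<\infty$ and pick any $k\in\Nat$ with $2^k>w$, and set
\[
T\ :=\ \sqbrad{\sigma\in\twomel}{\forall\, \tau \preceq \sigma \text{ with } |\tau|>k :\ (|\tau|-k,\ \tau)\notin X}.
\]
Deciding $\sigma\in T$ needs at most $|\sigma|+1$ queries to $X$, and $T$ is downward closed, so $T$ is a tree with $T\leqT X$. For incompressibility: if $\sigma\in T$ and $|\sigma|>k$ then $(|\sigma|-k,\sigma)\notin X\supseteq D$, so $(|\sigma|-k,\sigma)\notin D$, so $\ds(\sigma)\le k$ by the displayed equivalence; shorter $\sigma$ trivially satisfy $\ds(\sigma)\le|\sigma|\le k$; hence $\ds(T)\le k<\infty$. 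For positivity: $\twome\setminus[T]$ equals the union of all $[\tau]$ with $|\tau|>k$ and $(|\tau|-k,\tau)\in X$, and since these pairs are distinct members of $X$,
\[
\mu(\twome\setminus[T])\ \le\ \sum_{\tau}2^{-|\tau|}\ =\ 2^{-k}\sum_{\tau}2^{-(|\tau|-k)}\ \le\ 2^{-k}\wgt{X}\ =\ 2^{-k}w\ <\ 1,
\]
so $\mu([T])>0$; since every path through $T$ has its length-$n$ prefix in $T$, this gives $|T\cap 2^n|\ge 2^n\mu([T])$, i.e.\ $2^n=\bigo{|T\cap 2^n|}$, so $T$ is positive. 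Thus $T$ is a positive incompressible tree computable from $X$, which is what the Proposition claims.

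The only genuine obstacle is the tension between the two requirements: incompressibility forces the bound $k$ on $\ds(T)$ to be a \emph{fixed} constant, whereas positivity forces $2^k>\wgt{X}$, and $\wgt{X}$ — finite for each admissible $X$ but unbounded over the class of them — cannot be computed or even upper-bounded from $X$. I expect this to be the one substantive point, and its resolution is the non-uniformity built into the plan: for each fixed $k$ the construction of $T$ is a single Turing operation with oracle $X$, and the value $k=\lfloor\log(w+1)\rfloor+1$ makes its output both positive and incompressible, so \emph{some} $T\leqT X$ works. What remains is routine and would be relegated to the write-up: the precise pairing conventions, the vacuous treatment of strings of length $\le k$, the observation that $T$ is nonempty and has only deadend imperfections (explicitly permitted by Theorem~\ref{t6A4XwWbKX}), and, for the derivation of Theorem~\ref{t6A4XwWbKX} itself, that with $G$ as in Theorem~\ref{9pCjcd26em} the set $X:=G\cup(D\setminus G)$ contains $D$, has finite weight, and is computable from $G$.
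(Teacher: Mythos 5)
Your proof is correct and follows essentially the same route as the paper's: a c.e.\ set of small weight flagging strings according to their deficiency, the $X$-computable tree of strings with no flagged prefix at threshold $k$, and a non-uniform choice of $k$ from $\wgt{X}$ to secure positivity. The only differences are bookkeeping — you absorb the threshold into the first coordinate $|\sigma|-k$ and bound the removed measure directly by $2^{-k}\wgt{X}<1$, whereas the paper pairs $k$ into the second coordinate and finds a good $k$ by summing $\mu(2^\omega-[T_k])$ over all $k$ — and both resolve the same non-uniformity in the same way.
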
\begin{proof}
Let $\langle\cdot,\cdot\rangle$ be a computable bijection from $2^{<\omega}\times\Nat$ to $\Nat$. Let 
\[
D=\bigcup_{k}\{(|\sigma|,\langle\sigma,k \rangle):K(\sigma)\leq|\sigma|-k\}
\] 
and note that $D$ is a \ce set with  
\[
\wgt{D}=\sum_k\sum_{K(\sigma)\leq|\sigma|-k}2^{-|\sigma|}\leq
\sum_k\sum_\sigma 2^{-K(\sigma)-k}\leq\sum_k 2^{-k}<\infty.
\] 
Suppose that $X\supset D$, $\wgt{X}<\infty$ and let 
$T_k=\{\rho:(|\sigma|,\langle\sigma,k\rangle)\notin X\text{ for all }\sigma\prec\rho\}$.
 
Then $T_k$ is an $X$-computable tree. Also $T_k$ is incompressible: if $\sigma$ has deficiency at least $k$ then 
$(|\sigma|,\langle\sigma,k\rangle)\in D\subset X$ so $\sigma\notin T_k$. 
To show that some $T_k$ is positive, consider 
\[
2^\omega-[T_k]=\{x:(|\sigma|,\langle\sigma,k\rangle)\in X
\text{ for some }\sigma\prec x\}=
\bigcup_{(|\sigma|,\langle\sigma,k\rangle)\in X}\llbracket\sigma\rrbracket.
\]
Then
\[
\sum_k\mu(2^\omega-[T_k])\leq
\sum_k\sum_{(|\sigma|,\langle\sigma,k\rangle)\in X} 
\mu(\llbracket\sigma\rrbracket) \leq \wgt{X} < \infty.
\] 
Therefore there must be some $k$ such that $\mu(2^\omega-[T_k])<1/2$, as desired.
\end{proof}

We can now prove Theorem \ref{t6A4XwWbKX}: the
existence of a positive incompressible tree that does not compute any random real.
Let $D$ be as in  Proposition \ref{qbqxquZBUY}  and $G$ as in  Theorem \ref{9pCjcd26em}, so: 
\begin{itemize}
\item $D\cup G\equiv_T G$, since $D-G$ is finite
\item $D\subset D\cup G$ and $\wgt{D\cup G}\leq\wgt{D}+\wgt{G}<\infty$
\item $D\cup G$ computes a positive incompressible tree $T$.
\end{itemize}
Since $T\leq_T G$, it follows that $T$ does not compute any random real.

\section{Conclusion}
We have studied the loss of randomness that occurs 
in multi-dimensional structures such as trees and arrays that are effectively generated from a random oracle.
We focused on examples that go beyond the framework of deep \pz classes of 
\citet{bslBienvenuP16} and demonstrated the limitation of restricting to members of deep \pz classes.
Without generalizing the notion of depth to larger levels of the arithmetical hierarchy we showed that a larger class of trees exhibits
key properties established for members of deep \pz classes in \cite{bslBienvenuP16}.
Similarly, we showed that some properties established in \cite{bslBienvenuP16} for {\em negligible} $\Pi^0_1$ classes
hold more generally for {\em negligible} $\Pi^0_2$ classes. In this fashion we ask
\[
\textrm{{\em Question.} How can depth be defined for $\Pi^0_2$ or higher arithmetical classes?}
\]
It is possible to formalize depth for $\Pi^0_2$ classes in a way that preserves the properties  established
in  \cite{bslBienvenuP16}. However obtaining a natural and  general definition seems non-trivial. 

A specific aspect in this work has been the relationship between the growth of the number of paths of a tree $T$ and the
necessary increase of their randomness deficiency that it necessitates when $T$ is computable by an incomplete random.
As discussed in \S\ref{d6GV7C9Wxm}, in such cases the deficiency $\ds(\log\abs{T\cap 2^n})$ of the width of 
$T$ at level $n$ appears to be the 
lower bound for the necessary deficiency at the same level of $T$.
Our results fall one step short of this precise characterization, which we leave as an open question.



\begin{thebibliography}{29}
\providecommand{\natexlab}[1]{#1}
\providecommand{\url}[1]{\texttt{#1}}
\expandafter\ifx\csname urlstyle\endcsname\relax
  \providecommand{\doi}[1]{doi: #1}\else
  \providecommand{\doi}{doi: \begingroup \urlstyle{rm}\Url}\fi

\bibitem[Barmpalias and Wang(2024)]{treeout}
G.~Barmpalias and W.~Wang.
\newblock Pathwise-random trees and models of second-order arithmetic.
\newblock \emph{Inf. Comput.}, 299, 2024.

\bibitem[Barmpalias and Zhang(2024)]{dimtree}
G.~Barmpalias and X.~Zhang.
\newblock Growth and irreducibility in path-incompressible trees.
\newblock \emph{Inf. Comput.}, 297:\penalty0 105136, 2024.

\bibitem[Barmpalias et~al.(2010)Barmpalias, Lewis, and Ng]{BLNg08}
G.~Barmpalias, A.~E.~M. Lewis, and K.~M. Ng.
\newblock The importance of \pz classes in effective randomness.
\newblock \emph{J.\ Symb.\ Log.}, 75\penalty0 (1):\penalty0 387--400, 2010.

\bibitem[Bennett(1988)]{Bennett1988}
C.~H. Bennett.
\newblock Logical depth and physical complexity.
\newblock In R.~Herken, editor, \emph{The universal {Turing machine}, a half
  century survey}, pages 227--257. Oxford U.P., 1988.

\bibitem[Bienvenu and Porter(2016)]{bslBienvenuP16}
L.~Bienvenu and C.~P. Porter.
\newblock Deep {$\Pi^0_1$} classes.
\newblock \emph{Bull. Symb. Log.}, 22\penalty0 (2):\penalty0 249--286, 2016.

\bibitem[Bienvenu and Porter(2024)]{BienPortDeep24}
L.~Bienvenu and C.~P. Porter.
\newblock Bridging computational notions of depth.
\newblock Arxiv:2403.04045v1, 2024.

\bibitem[Bienvenu et~al.(2014)Bienvenu, H{\"{o}}lzl, Miller, and
  Nies]{DenjoyBHMN14}
L.~Bienvenu, R.~H{\"{o}}lzl, J.~S. Miller, and A.~Nies.
\newblock {D}enjoy, {D}emuth, and density.
\newblock \emph{J. Math. Logic}, 14, 2014.

\bibitem[Durand et~al.(2008)Durand, Levin, and Shen]{DurandLevinShen08}
B.~Durand, L.~A. Levin, and A.~Shen.
\newblock Complex tilings.
\newblock \emph{J.\ Symb.\ Log.}, 73\penalty0 (2):\penalty0 593--613, 2008.

\bibitem[Figueira et~al.(2009)Figueira, Miller, and Nies]{Indiffexn101}
S.~Figueira, J.~S. Miller, and A.~Nies.
\newblock Indifferent sets.
\newblock \emph{J. Log. and Comput.}, 19\penalty0 (2):\penalty0 425–443,
  2009.

\bibitem[Franklin and Ng(2011)]{FrNgDiff}
J.~Franklin and K.-M. Ng.
\newblock Difference randomness.
\newblock \emph{Proc. Amer. Math. Soc.}, 139:\penalty0 345--360, 2011.

\bibitem[Greenberg et~al.(2021)Greenberg, Miller, and Nies]{pamiller}
N.~Greenberg, J.~S. Miller, and A.~Nies.
\newblock Highness properties close to {PA} completeness.
\newblock \emph{Isr. J. Math.}, 244:\penalty0 419--465, 2021.

\bibitem[Hirschfeldt et~al.(2023)Hirschfeldt, Jockusch, and
  Schupp]{deniscarlsch21}
D.~R. Hirschfeldt, C.~G. Jockusch, and P.~E. Schupp.
\newblock {Coarse Computability, the Density Metric, Hausdorff Distances
  Between Turing Degrees, Perfect Trees, and Reverse Mathematics}.
\newblock \emph{J. Math. Logic}, 24\penalty0 (2), 2023.

\bibitem[Jockusch and Soare(1972{\natexlab{a}})]{Jockusch197201}
C.~G. Jockusch and R.~I. Soare.
\newblock {$\Pi^0_1$} classes and degrees of theories.
\newblock \emph{Trans. Amer. Math. Soc.}, 173:\penalty0 33--56,
  1972{\natexlab{a}}.

\bibitem[Jockusch and Soare(1972{\natexlab{b}})]{MR0316227}
C.~G. Jockusch, Jr. and R.~I. Soare.
\newblock {$\Pi \sp{0}\sb{1}$} classes and degrees of theories.
\newblock \emph{Trans. Amer. Math. Soc.}, 173:\penalty0 33--56,
  1972{\natexlab{b}}.

\bibitem[Kjos-Hanssen et~al.(2011)Kjos-Hanssen, Merkle, and
  Stephan]{KjosHanssenMStrans}
B.~Kjos-Hanssen, W.~Merkle, and F.~Stephan.
\newblock Kolmogorov complexity and the recursion theorem.
\newblock \emph{Trans. Amer. Math. Soc.}, 363, 2011.

\bibitem[Levin(1984)]{leviniandcLevin84}
L.~A. Levin.
\newblock Randomness conservation inequalities; information and independence in
  mathematical theories.
\newblock \emph{Inform. and Control}, 61\penalty0 (1):\penalty0 15--37, 1984.

\bibitem[Levin(2013)]{Levin2013FI}
L.~A. Levin.
\newblock Forbidden information.
\newblock \emph{J. ACM}, 60\penalty0 (2):\penalty0 9:1--9:9, 2013.

\bibitem[Levin and V'yugin(1977)]{neglivyuginLevin}
L.~A. Levin and V.~V. V'yugin.
\newblock Invariant properties of informational bulks.
\newblock \emph{Lecture Notes in Computer Science}, 53:\penalty0 359--364,
  1977.

\bibitem[Liu(2021)]{luliumajma}
L.~Liu.
\newblock A computable analysis of majorizing martingales.
\newblock \emph{Bull. London Math. Soc.}, 53\penalty0 (3):\penalty0 759--776,
  2021.

\bibitem[Martin-L{\"o}f(1966)]{MR0223179}
P.~Martin-L{\"o}f.
\newblock The definition of random sequences.
\newblock \emph{Inf. Comput.}, 9:\penalty0 602--619, 1966.

\bibitem[Miller(2012)]{Miller2012}
J.~S. Miller.
\newblock Two notes on subshifts.
\newblock \emph{Proc. Amer. Math. Soc.}, 140\penalty0 (5):\penalty0
  1617–1622, 2012.
\newblock ISSN 1088-6826.

\bibitem[Miller and Yu(2011)]{milleryutran2}
J.~S. Miller and L.~Yu.
\newblock Oscillation in the initial segment complexity of random reals.
\newblock \emph{Adv. Math.}, 226\penalty0 (6):\penalty0 4816--4840, 2011.

\bibitem[Molchanov(2005)]{Molchanov}
I.~Molchanov.
\newblock \emph{{Theory of Random Sets}}.
\newblock Springer, Springer-Verlag London Limited 2005, 2005.

\bibitem[Moser and Stephan(2015)]{StephanMoserdeep}
P.~Moser and F.~Stephan.
\newblock Depth, highness and dnr degrees.
\newblock In A.~Kosowski and I.~Walukiewicz, editors, \emph{Fundamentals of
  Computation Theory}, pages 81--94, Cham, 2015. Springer International
  Publishing.

\bibitem[Soare(2016)]{Soare16book}
R.~I. Soare.
\newblock \emph{Turing Computability - Theory and Applications}.
\newblock Theory and Applications of Computability. Springer, 2016.

\bibitem[Stephan(2006)]{MR2258713frank}
F.~Stephan.
\newblock Martin-{L}\"{o}f random and {${\rm PA}$}-complete sets.
\newblock In \emph{Logic Colloquium '02}, volume~27 of \emph{Lect. Notes Log.},
  pages 342--348. Assoc. Symbol. Logic, La Jolla, CA, 2006.

\bibitem[V'yugin(1982)]{neglivyuginold}
V.~V. V'yugin.
\newblock Algebra of invariant properties of binary sequences.
\newblock \emph{Probl. Peredachi Inf.}, 18\penalty0 (2):\penalty0 83--100,
  1982.

\bibitem[Yates(1976)]{YatesBM1976}
C.~E.~M. Yates.
\newblock {Banach–Mazur} games, comeager sets and degrees of unsolvability.
\newblock \emph{Math. Proc. Camb. Philos. Soc.}, 79\penalty0 (2):\penalty0
  195--220, 1976.

\bibitem[Zvonkin and Levin(1970)]{MR0307889}
A.~K. Zvonkin and L.~A. Levin.
\newblock The complexity of finite objects and the basing of the concepts of
  information and randomness on the theory of algorithms.
\newblock \emph{Uspehi Mat. Nauk}, 25\penalty0 (6(156)):\penalty0 85--127,
  1970.

\end{thebibliography}
\end{document}